\numberwithin{equation}{section}
\newtheorem{theorem}{Theorem}[section]
\newtheorem{prop}[theorem]{Proposition}
\newtheorem{lemma}[theorem]{Lemma}
\newtheorem{corollary}[theorem]{Corollary}
\theoremstyle{definition}
\newtheorem{example}[theorem]{Example}
\newtheorem{remark}[theorem]{Remark}
\def\ZZ{\mathbb{Z}}
\def\RR{\mathbb{R}}
\def\pr{\mathrm{pr}}
\def\Hom{\mathrm{Hom}}
\def\SO{\mathrm{SO}}
\def\O{\mathrm{O}}
\def\Tor{\mathrm{Tor}}
\def\Ext{\mathrm{Ext}}
\def\Pin{\mathrm{Pin}}
\def\PinStr{\mathcal{P}\mathrm{in}}
\def\Split{\mathcal{S}\mathrm{plit}}
\def\Spin{\mathrm{Spin}}
\def\Fr{\mathbb{F}}
\def\Or{\mathbb{L}}
\def\Pinm{\Pin^-}
\def\PinBor1{\Omega_1^{\Pinm}}
\def\normb{\nu}
\def\sq{\mathrm{Sq}}
\def\coker{\mathrm{coker}}
\def\susp{\mathrm{S}}
\def\RP{\mathbb{R}\mathrm{P}}
\def\framing{\varphi}
\newcommand\restr[2]{\ensuremath{\left.#1\right|_{#2}}}
\newcommand\smod[1]{\ensuremath{\mathrm{mod} \: #1}}
\definecolor{myblue}{HTML}{0089CF}
\definecolor{myorange}{HTML}{CF4600}
\begin{document}
	
	\title{A geometric computation of cohomotopy groups in co-degree one}
	
	\author{Michael Jung, Thomas O. Rot}
	\address{Department of Mathematics, Vrije Universiteit Amsterdam}
	\email{m.jung@vu.nl, t.o.rot@vu.nl}
    \keywords{cohomotopy, Pontryagin-Thom construction, pin structures, homology with local coefficients, vector bundles, Euler class}
    \subjclass{55Q55, 57R22, 55N25, 57R15}
	
	\begin{abstract}
        Using geometric arguments, we compute the group of homotopy classes of maps from a closed $(n+1)$-dimensional manifold to the $n$-sphere for $n \geq 3$.
        Our work extends results from Kirby, Melvin and Teichner for closed oriented 4-manifolds and from Konstantis for closed $(n+1)$-dimensional spin manifolds, considering possibly non-orientable and non-spinnable manifolds.
        In the process, we introduce two types of manifolds that generalize the notion of odd and even 4-manifolds.
        Furthermore, for the case that $n \geq 4$, we discuss applications for rank~$n$ spin vector bundles and obtain a refinement of the Euler class in the cohomotopy group that fully obstructs the existence of a non-vanishing section.
	\end{abstract}
	
	\maketitle
	\thispagestyle{empty}

    \section{Introduction}

    The cohomotopy sets $\pi^n(X) = [X^{n+k}, S^n]$ of (unpointed) homotopy classes of maps from an $(n+k)$-dimensional cell complex $X$ into an $n$-sphere play an important role in algebraic and differential topology.
    If $n$ is large enough, this set can be given a group structure (see Spanier~\cite{bib:spanier:cohomotopy_groups}).
    In the case when $k=1$ and $n \geq 3$, Steenrod's main theorem~\cite[Theorem 28.1, p.\ 318]{bib:steenrod:cocycles} implies that $\pi^n(X)$ fits into the short exact sequence of abelian groups
    \begin{equation}\label{eq:steenrod:ses}
        \begin{tikzcd}[sep=small]
            0 \arrow[r] & \sfrac{H^{n+1}(X; \ZZ_2)}{(\sq^2 \circ r)\!\left(H^{n-1}(X; \ZZ)\right)} \arrow[r] & \pi^n(X) \arrow[r] & H^n(X; \ZZ) \arrow[r] & 0,
        \end{tikzcd}
    \end{equation}
    where $r \colon H^{n-1}(X; \ZZ) \to H^{n-1}(X; \ZZ_2)$ is the $\smod 2$ reduction, and the epimorphism pulls back the generator of $H^n(S^n ; \ZZ)$.
    The question about the corresponding group extension has been revisited by Taylor~\cite{bib:taylor:principal} using methods of Larmore and Thomas~\cite{bib:lam_tho:extension}.
    While this approach relies on homotopical techniques involving Postnikov towers, it is worth exploring a more geometric approach if $X$ is a manifold.
    Notably, Kirby, Melvin, and Teichner~\cite{bib:kir_mel_tei:coh_4manifolds} provided a geometric proof for the case of closed, oriented 4-manifolds, contributing to a deeper understanding of the subject.    
    Additionally, Konstantis~\cite{bib:konstantis:counting_inv} presented a geometric proof for closed $(n+1)$-dimensional spin manifolds, introducing a new counting invariant in the process.
    The key tool in both cases is the Pontryagin-Thom construction that provides an isomorphism $\pi^n(X^{n+k}) \cong \Fr_k(X)$, where $\Fr_k(X)$ denotes the set of normally framed $k$-dimensional submanifolds of $X$ up to normally framed bordism in $X \times [0,1]$.

    In this article, we aim to complete the geometric picture when $k=1$ and $n \geq 3$.
    More precisely, we provide a geometric computation of $\pi^n(X)$ for closed $(n+1)$-dimensional smooth manifolds that are not necessarily spin or orientable.
    We find that $\Fr_1(X)$ fits into a dual version of~\eqref{eq:steenrod:ses}:
    \begin{equation}\label{eq:ses:framed}
        \begin{tikzcd}[sep=small]
            0 \arrow[r] & \ker(h) \arrow[r] & \Fr_1(X) \arrow[r, "h"] & H_1(X; o_X) \arrow[r] & 0,
        \end{tikzcd}
    \end{equation}
    where $h$ maps the bordism class of a normally framed link to the corresponding twisted fundamental class in homology with coefficients in the orientation sheaf~$o_X$. 
    Morally speaking, the map $h \colon \Fr_1(X) \to H_1(X; o_X)$ forgets the normal framing but remembers its orientation.
    This allows us to use geometric techniques to determine $\ker(h)$ and compute the extension of the sequence.

    As it turns out, the result depends on a certain classification of manifolds that generalizes the notion of odd and even oriented 4-manifolds.
    More precisely, if $n=3$, that is if $X$ is a 4-manifold, we say that $X$ is of \emph{type~I} if there is a closed embedded surface with an orientable normal bundle but non-vanishing $\smod 2$ self-intersection.
    For $n \geq 4$, we say that $X$ is of \emph{type~I} if there is a closed embedded surface whose normal bundle is orientable but not trivial.
    The manifold $X$ is of \emph{type~II} if it is \underbar{not} of type~I.
    We now state the main theorem of this article.
    \begin{theorem}\label{thm:main}
        If $X$ is of type~I, the forgetful map $h \colon \Fr_1(X) \to H_1(X; o_X)$ is an isomorphism.
        If $X$ is of type~II, then there is a short exact sequence of abelian groups
        \begin{equation}\label{eq:ses:typeii}
            \begin{tikzcd}[sep=small]
                0 \arrow[r] & \ZZ_2 \arrow[r] & \Fr_1(X) \arrow[r, "h"] & H_1(X; o_X) \arrow[r] & 0
            \end{tikzcd}
        \end{equation}
        whose extension is classified by the unique element in $\Ext(H_1(X; o_X), \ZZ_2)$ that is mapped to $w^2_1(X) + w_2(X)$ under $\delta$ in the universal coefficient sequence
        \begin{equation*}
            \begin{tikzcd}[sep=small]
                0 \arrow[r] & \Ext(H_{1}(X;o_X), \ZZ_2) \arrow[r, "\delta"] & H^2(X; \ZZ_2) \arrow[r] & \Hom(H_2(X; o_X), \ZZ_2) \arrow[r] & 0 \mathrlap{.}
            \end{tikzcd}
        \end{equation*}
    \end{theorem}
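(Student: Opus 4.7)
The plan is to prove the theorem in three stages.

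\textbf{Surjectivity of $h$.} Any class $\alpha \in H_1(X; o_X)$ can be represented by a smoothly embedded closed $1$-manifold $L \subset X$ whose normal bundle is orientable compatibly with $o_X$ (standard representability argument for twisted $1$-cycles in manifolds of dimension $\geq 4$). Since orientable real vector bundles over $1$-manifolds are trivial, $L$ admits a framing $\framing$, yielding a lift $(L, \framing) \in \Fr_1(X)$.

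\textbf{Computation of $\ker(h)$.} A framed link in $\ker(h)$ bounds a twisted-oriented embedded surface $\Sigma \subset X$. Using surgery along arcs in $\Sigma$ together with framing adjustments, any element of $\ker(h)$ is framed-bordant to a single small embedded circle in a coordinate chart, and two framings of such a circle differ by an element of $\pi_1(\SO(n)) = \ZZ_2$. Hence $\ker(h)$ is a quotient of $\ZZ_2$. The type~I/type~II dichotomy decides which: if $X$ is of type~I, a closed surface $\Sigma_0 \subset X$ with orientable but non-trivial normal bundle (i.e.\ $w_2(\nu_{\Sigma_0}) \neq 0$, which coincides with non-vanishing mod~$2$ self-intersection when $n=3$) provides a framed null-bordism of the non-trivially framed circle; intuitively, dragging the circle along a loop on $\Sigma_0$ witnessing the non-triviality of $\nu_{\Sigma_0}$ twists its framing by the generator of $\pi_1(\SO(n))$. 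If $X$ is of type~II, an obstruction-theoretic argument on candidate cobordisms in $X \times [0,1]$ rules out such a null-bordism, so $\ker(h) = \ZZ_2$.

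\textbf{Identification of the extension class.} In the type~II case, the extension is determined by the homomorphism $\ker\bigl(2 \colon H_1(X; o_X) \to H_1(X; o_X)\bigr) \to \ZZ_2$ sending $\alpha$ to $2\tilde\alpha$ for any lift $\tilde\alpha \in \Fr_1(X)$. Representing $\tilde\alpha$ by a framed link $L$, the double $2L$ is $h$-trivial, so it bounds a twisted-oriented surface $\Sigma \subset X$. The element $2\tilde\alpha$ then coincides with the primary $\pi_1(\SO(n))$-valued obstruction to extending the induced boundary framing over $\nu_\Sigma$. By Whitney's product formula combined with the vanishing $v_2(T\Sigma) = w_1^2(T\Sigma) + w_2(T\Sigma) = 0$ (Wu's formula on a surface), this obstruction simplifies to $\langle w_1^2(X) + w_2(X), [\Sigma]\rangle$. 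The universal coefficient sequence then identifies the extension as the unique preimage of $w_1^2(X) + w_2(X)$ under $\delta$; in the type~II case $w_1^2 + w_2$ indeed lies in the image of $\delta$ because it vanishes on every class in $H_2(X; o_X)$ by the very definition of type~II.

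The main obstacle is the third step: carefully matching the geometric framing-extension obstruction with $w_1^2 + w_2$ in the twisted setting, which demands delicate bookkeeping of local coefficients and framings, and verifying that the invariant descends to a well-defined map on $\ker(2)$ modulo $2 H_1(X; o_X)$ independently of the chosen representative and the chosen bounding surface.
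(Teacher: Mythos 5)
The proposal follows the same overall strategy as the paper — surjectivity via triviality of oriented bundles over 1-complexes, reduction of $\ker(h)$ to a framed circle in a chart, and identification of the extension with $w_1^2 + w_2$ via a twisted universal-coefficient/Bockstein argument — but there are two genuine gaps where the hard work is elided.

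First, in the computation of $\ker(h)$ the type~II direction is asserted without proof: you write that ``an obstruction-theoretic argument on candidate cobordisms in $X \times [0,1]$ rules out such a null-bordism,'' but this is exactly where the content lies. The paper proves the contrapositive: assuming $[U_1]=0$, it glues the bounding surface $\Sigma_0 \subset X\times[0,1]$ to a disk $D$ bounding $U$ from the other side to manufacture a closed surface whose normal bundle is oriented but non-trivial, then projects and homotopes to an immersed surface in $X$, showing $X$ is of type~I (via \cref{prop:geom_typei} and \cref{rmk:vb_trivial_sw_class}). Your ``dragging'' heuristic captures the forward implication (type~I $\Rightarrow$ $[U_1]=0$), but the converse, which is what actually forces $\ker(h)\cong\ZZ_2$ in the type~II case, is missing.

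Second, the extension-class identification is formulated imprecisely and the central step is skipped. Being in $\ker(h)$ means $2L$ bounds a normally oriented surface in $X\times[0,1]$, not an embedded twisted-oriented surface in $X$; more importantly, the paper's \cref{prop:surface_normal_whitney_twice} works not with a surface bounded by $2L$ but with a \emph{closed} surface $\Sigma\subset X$ satisfying $\beta_1([\Sigma]_2)=h([L,\framing])$, cutting $\Sigma$ along the curve Poincar\'e dual to $w_1(\nu_\Sigma)$ and using a rank-one subbundle bordism (\cref{lem:rank1_subbundle_bordism}) to show $2[L,\framing]=[U_0]$ iff $w_2(\nu_\Sigma)=0$. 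Combined with \cref{lem:func_measures_w2} and \cref{lem:bockstein_dual_uct} (identifying the twisted Bockstein $\beta_1$ as the dual of $\delta$), this gives $\varepsilon_X\circ\beta_1 = w_1^2 + w_2$. You acknowledge this as the ``main obstacle,'' and indeed it is — but a proposal that names the obstacle without surmounting it has not proved the theorem. The well-definedness of the extension invariant independent of choice of bounding surface, which you also flag, is resolved in the paper precisely by routing through $\beta_1$ rather than arbitrary bounding surfaces.
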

    Recall that $w^2_1(X) + w_2(X)$ is the primary obstruction of finding a $\Pinm$-structure on $X$.
    It follows from the main theorem that the sequence~\eqref{eq:ses:typeii} splits if and only if $X$ admits a $\Pinm$-structure.
    In that case, we also obtain a correspondence between splitting maps and $\Pinm$-structures, which is stated in the following theorem:
    \begin{theorem}\label{thm:splitting_pin}
        Suppose $X$ is $\Pinm$.
        Then the sequence~\eqref{eq:ses:typeii} splits.
        Moreover, if $X$ is orientable, all equivalent $\Spin$-structures of $X$ are in 1-to-1 correspondence with splitting maps of~\eqref{eq:ses:typeii}.
        On the other hand, if $X$ is not orientable, all equivalent $\Pinm$-structures of $X$ are in 2-to-1 correspondence with splitting maps of~\eqref{eq:ses:typeii}.
        In that case, the difference is given by the action of $w_1(X)$.
    \end{theorem}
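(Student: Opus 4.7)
The first assertion follows immediately from Theorem~\ref{thm:main}: a $\Pinm$-structure on $X$ forces $w_1^2(X)+w_2(X)=0$, so the extension class of~\eqref{eq:ses:typeii} vanishes and the sequence splits.

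The heart of the proof will be to construct, for every $\Pinm$-structure $\mathfrak{p}$ on $TX$, a canonical splitting $s_\mathfrak{p}\colon H_1(X; o_X) \to \Fr_1(X)$. Given $c \in H_1(X; o_X)$, I would represent $c$ by an embedded closed $1$-submanifold $\gamma \subset X$, whose normal bundle $\nu_\gamma$ is automatically orientable and comes equipped with an orientation fixed by $c$. The idea is then to reduce $\mathfrak{p}|_\gamma$ along the preimage in $\Pinm(n+1)$ of the stabilizer $\O(1)\times\SO(n)\subset\O(n+1)$ of the splitting $T\gamma\oplus\nu_\gamma$ together with the chosen orientation of $\nu_\gamma$; this extracts a $\Spin$-structure on $\nu_\gamma$. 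Because $\nu_\gamma$ is an oriented rank-$n$ bundle over a closed $1$-manifold and $n\geq 3$, the vanishing of $\pi_0(\Spin(n))$ and $\pi_1(\Spin(n))$ means this $\Spin$-structure corresponds to a unique homotopy class of trivialization of $\nu_\gamma$. Define $s_\mathfrak{p}(c)$ as the bordism class of this framed link. Well-definedness on homology classes will follow by running the same construction on an embedded $2$-cobordism $W\subset X\times[0,1]$ with orientable normal bundle, using additionally that $\pi_2(\Spin(n))=0$.

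The counting argument proceeds by comparing torsor structures. Equivalence classes of $\Pinm$-structures on $X$ form a torsor over $H^1(X;\ZZ_2)$, while splittings of~\eqref{eq:ses:typeii} form a torsor over $\Hom(H_1(X; o_X),\ZZ_2)$. Tracing the construction of $s_\mathfrak{p}$, I would verify that the assignment $\mathfrak{p}\mapsto s_\mathfrak{p}$ is equivariant along the homomorphism
\begin{equation*}
    \varphi\colon H^1(X;\ZZ_2) \to \Hom(H_1(X; o_X), \ZZ_2), \qquad \alpha \mapsto \bigl(c\mapsto \langle \alpha, \bar r(c)\rangle\bigr),
\end{equation*}
where $\bar r\colon H_1(X; o_X)\to H_1(X;\ZZ_2)$ is the mod~$2$ reduction. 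The Bockstein long exact sequence attached to $0\to o_X \xrightarrow{2} o_X \to \ZZ_2 \to 0$ shows that $\bar r$ is surjective when $X$ is orientable and has image of index~$2$ otherwise. In the orientable case $\varphi$ is therefore an isomorphism, yielding the $1$-to-$1$ correspondence between $\Spin$-structures and splittings. In the non-orientable case, because $w_1(X)|_\gamma=w_1(T\gamma)+w_1(\nu_\gamma)$ vanishes for any $\gamma$ with orientable normal bundle, $w_1(X)$ annihilates the image of $\bar r$ and is the unique non-zero element doing so. Thus $\ker\varphi=\{0,w_1(X)\}$, which gives the $2$-to-$1$ correspondence with fibers the $w_1(X)$-orbits.

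The main technical hurdle is the construction in the second paragraph: verifying carefully that the reduction of the $\Pinm(n+1)$-structure along the indicated subgroup singles out a genuine $\Spin$-structure on $\nu_\gamma$ and that this assignment behaves coherently in families across the cobordisms $W$. Once this compatibility is established, the equivariance of $\mathfrak{p}\mapsto s_\mathfrak{p}$ is a routine computation and the algebraic conclusions of the final paragraph follow.
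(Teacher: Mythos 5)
Your strategy is the formal dual of the paper's: where the paper constructs the \emph{retraction} $\kappa\colon\Fr_1(X)\to\ZZ_2$ by pushing a framed link $(L,\framing)$ to a class in $\PinBor1\cong\ZZ_2$ (using the framing to carry the ambient $\Pinm$-structure onto $TL$ via Kirby--Taylor Lemma~1.6), you construct the \emph{section} $s_\mathfrak{p}\colon H_1(X;o_X)\to\Fr_1(X)$ by trying to produce a canonical framing of a normally oriented representative. These are the two splitting data for a short exact sequence, so the strategies are logically interchangeable in principle; the paper's direction is smoother because the data flows naturally (a trivialization of $\nu_L$ plus $\Pinm$ on $TL\oplus\nu_L$ gives $\Pinm$ on $TL$ by stabilization) rather than requiring you to \emph{build} a framing from an orientation.

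The concrete gap is in your claim that ``reducing $\mathfrak{p}|_\gamma$ along the preimage in $\Pinm(n+1)$ of $\O(1)\times\SO(n)$ extracts a $\Spin$-structure on $\nu_\gamma$.'' That preimage is not $\O(1)\times\Spin(n)$, nor any split extension: in $\Pinm(n+1)$ a unit vector $v$ lifting the generator of $\O(1)$ satisfies $v^2=-1$, so the preimage of $\O(1)$ is cyclic of order~$4$, and $v$ anticommutes with the $\Spin(n)$ factor. There is therefore no canonical projection from the reduced $G$-bundle onto a $\Spin(n)$-structure on $\nu_\gamma$; equivalently, what is really happening is that to extract a $\Spin$-structure on $\nu_\gamma$ from a $\Spin$-structure on $TX|_\gamma\cong T\gamma\oplus\nu_\gamma$ you must first \emph{choose} one of the two $\Spin$-structures on $T\gamma$, and changing this choice on a component changes the resulting class in $\Fr_1(X)$ by $[U_1]$. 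Your construction as written does not pin this down, so $s_\mathfrak{p}$ is not yet well-defined. This is fixable by decreeing, say, the bounding $\Spin$-structure on each $T\gamma_i$, but one must then verify that this convention is compatible with the cobordism argument and that the resulting map is additive; these are exactly the kinds of checks the paper's retraction sidesteps by landing directly in $\PinBor1$.

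Your counting argument is fine in outline and close to the paper's \cref{lem:action_isomorphism} and \cref{thm:pin_split_equivariant}. Your Bockstein computation that $\bar r$ is surjective (orientable case) or has image of index two with $w_1(X)$ generating the annihilator of the image (non-orientable case) is a legitimate alternative to the paper's use of the twisted universal coefficient sequence~\eqref{eq:twisted_uct}. You do need surjectivity of $\varphi$ to conclude that \emph{every} splitting arises from a $\Pinm$-structure; this is implicit in your set-up (any homomorphism $H_1(X;o_X)\to\ZZ_2$ kills $2H_1(X;o_X)=\ker\bar r$ and then extends from the subspace $\operatorname{im}\bar r$ of the $\ZZ_2$-vector space $H_1(X;\ZZ_2)$ to all of it), but you should say it; alternatively, \cref{thm:twisted_uct} with $k=1$ gives surjectivity and $\ker\varphi=\langle w_1(X)\rangle$ in one stroke, which is what the paper does.
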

    
    At last, we discuss applications for vector bundles and generalize the invariant introduced in~\cite{bib:konstantis:counting_inv}.
    More precisely, for the case that $n \geq 4$, we construct a refinement of the Euler class in the cohomotopy group $\pi^n(X)$ that gives a full obstruction of finding a non-vanishing section for spin vector bundles over $X$ of rank~$n$.
    If $X$ is of type~I, we obtain an explicit result:
    \begin{theorem}\label{thm:vb_typei}
        Suppose $X$ is an $(n+1)$-dimensional connected closed manifold of type I with $n \geq 4$.
        Let $E \to X$ be an oriented spin vector bundle of rank $n$.
        Then the Euler class $e(E)$ is zero if and only if $E$ admits a non-vanishing section.
    \end{theorem}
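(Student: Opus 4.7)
The plan is to reduce the theorem to a direct consequence of \Cref{thm:main} by invoking the refined Euler class in $\pi^n(X)$ that will have been constructed earlier in the paper as a full obstruction to the existence of a non-vanishing section of $E$. The easy implication, that the ordinary Euler class $e(E) \in H^n(X;\ZZ)$ vanishes whenever $E$ admits a non-vanishing section, is standard obstruction theory. For the converse, I translate to framed bordism through the Pontryagin-Thom isomorphism $\pi^n(X) \cong \Fr_1(X)$: the refined Euler class is represented by the transverse zero locus of a generic smooth section $s \colon X \to E$, equipped with the normal framing induced by the spin trivialisation of $E$ along this $1$-dimensional submanifold.

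The hypothesis that $X$ is of type~I enters through \Cref{thm:main}, which asserts that the forgetful map $h \colon \Fr_1(X) \to H_1(X; o_X)$ is an isomorphism. Composing with twisted Poincar\'e duality $H_1(X; o_X) \cong H^n(X; \ZZ)$ yields a canonical isomorphism $\pi^n(X) \cong H^n(X; \ZZ)$. Under this composition, the refined Euler class corresponds precisely to the classical Euler class $e(E)$: the map $h$ sends the framed zero locus of $s$ to its twisted fundamental class (retaining only the orientation of the normal bundle inside $E$), and this class is Poincar\'e dual to $e(E)$ by the standard description of the Euler class as the zero set of a transverse section.

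Combining these two ingredients, the vanishing of $e(E)$ forces the refined Euler class to vanish as well, and hence $E$ admits a non-vanishing section. The main obstacle I anticipate is verifying the compatibility between the refined and the classical Euler class under $h$ composed with twisted Poincar\'e duality. This should follow from a local model around the zero set of $s$ together with naturality of Poincar\'e duality with coefficients in $o_X$, but it relies on the explicit definition of the refined Euler class from the earlier construction in the paper. Once this compatibility is in place, \Cref{thm:main} closes the argument without further input.
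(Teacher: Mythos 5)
Your proposal is correct and follows essentially the same route as the paper: invoke \cref{thm:nonvanishing_section} to see that $[L,\varphi]\in\Fr_1(X)$ is a full obstruction to a non-vanishing section, use the type~I hypothesis via \cref{thm:main} to make $h$ an isomorphism, and identify $h([L,\varphi])$ with the twisted Poincar\'e dual of $e(E)$. The compatibility you flagged as the main obstacle is precisely what the paper asserts (without further proof) in the sentence preceding \cref{lem:section_well_defined}, so your treatment is consistent with theirs.
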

    If $X$ has a $\Pinm$-structure, we can apply the associated splitting map $\kappa \colon \Fr_1(X) \to \ZZ_2$ to the newly defined obstruction class.
    The corresponding number $\kappa(E)$ is called the \emph{degree of $E$} and well-defined if the Euler class $e(E)$ vanishes.
    This leads to an explicit result for vector bundles over $\Pinm$ manifolds:
    \begin{theorem}\label{thm:vb_typeiia}
        Suppose $X$ is an $(n+1)$-dimensional connected closed manifold that is $\Pinm$ where $n \geq 4$.
        Let $E \to X$ be an oriented spin vector bundle of rank $n$.
        Then $E$ admits a non-vanishing section if and only if both the Euler class $e(E)$ and the degree $\kappa(E)$ vanish.
    \end{theorem}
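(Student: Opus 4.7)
The strategy is to construct a cohomotopy-valued refinement $\hat e(E) \in \pi^n(X) \cong \Fr_1(X)$ of the Euler class that fully obstructs a non-vanishing section, and then to detect its vanishing through the pair $(e(E),\kappa(E))$ using the splitting provided by \Cref{thm:splitting_pin}.

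First I would construct $\hat e(E)$ geometrically. Choose a section $s$ of $E$ transverse to the zero section, so that $Z := s^{-1}(0)$ is a closed $1$-dimensional submanifold of $X$. The vertical derivative of $s$ along $Z$ yields an isomorphism $\nu_Z \cong E|_Z$, and, since $E$ is an oriented spin bundle and $Z$ is $1$-dimensional, this determines a canonical homotopy class of framings of $\nu_Z$. Setting $\hat e(E) := [Z,\mathrm{fr}] \in \Fr_1(X)$, a standard transversality argument applied to a generic path between two such sections---whose zero set in $X \times [0,1]$ provides a framed bordism---shows that $\hat e(E)$ is independent of $s$. Poincar\'e duality with local coefficients then identifies $h(\hat e(E)) \in H_1(X; o_X)$ with the class dual to $e(E) \in H^n(X;\ZZ)$, via the classical realization of the Euler class as the class of the zero locus of a generic section.

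The main obstacle, and the technical heart of the argument, is the geometric criterion: \emph{$E$ admits a non-vanishing section if and only if $\hat e(E) = 0$}. The forward direction is immediate, as a nowhere-zero section has empty zero set. For the converse, assuming $\hat e(E) = 0$, choose a framed null-bordism $W \subset X \times [0,1]$ with $\partial W = Z \times \{0\}$ realizing the framing, and perform a relative inverse Pontryagin-Thom construction: the framing data of $W$ trivializes the pullback $p^* E$ on a tubular neighborhood of $W$, and a bump-function interpolation produces a section $\tilde s$ of $p^* E$ that coincides with $s$ at $t=0$, vanishes exactly along $W$ with the prescribed derivatives, and is nowhere zero outside that neighborhood. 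Since $W$ is disjoint from $X \times \{1\}$, the restriction $\tilde s|_{X \times \{1\}}$ is the desired non-vanishing section of $E$. The delicate point is gluing the framing trivialization smoothly with a nowhere-zero extension while respecting the orientation and spin data on the possibly non-orientable $X$.

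Granted this criterion, the theorem follows quickly. Since $X$ is $\Pinm$, \Cref{thm:splitting_pin} yields a splitting $\kappa \colon \Fr_1(X) \to \ZZ_2$ of the sequence \eqref{eq:ses:framed} and hence a decomposition $\Fr_1(X) \cong \ZZ_2 \oplus H_1(X; o_X)$ with projections $\kappa$ and $h$. Therefore $\hat e(E) = 0$ if and only if both $\kappa(\hat e(E)) = 0$ and $h(\hat e(E)) = 0$, which by the identifications above is precisely $\kappa(E) = 0$ and $e(E) = 0$.
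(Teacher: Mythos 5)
The endgame of your argument---constructing the refinement $[L,\varphi]\in\Fr_1(X)$, identifying $h([L,\varphi])$ with $\mathrm{PD}_{\mathrm{tw}}(e(E))$, and then reading off its vanishing through the coordinates $(\kappa,h)$ supplied by the $\Pinm$-splitting---is precisely the paper's reduction of \Cref{thm:vb_typeiia} to \Cref{thm:nonvanishing_section}. (One small point worth making explicit: $\kappa(E)$ is only well defined once $w_n(E)=0$, by \Cref{lem:independent_triv}; this is harmless because $w_n(E)$ is the mod $2$ reduction of $e(E)$, so in the ``only if'' direction one first deduces $e(E)=0$ and only then speaks of $\kappa(E)$.)

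The genuine issue is the converse direction of the criterion ``$E$ has a nowhere-zero section iff $[L,\varphi]=0$,'' which is exactly \Cref{thm:nonvanishing_section} and is the technical heart of the whole section. Your route is a global ``relative inverse Pontryagin--Thom'' in $X\times[0,1]$: trivialize $p^*E$ over a tubular neighborhood $N(W)$ of the null-bordism $W$, and then claim that ``a bump-function interpolation produces a section $\tilde s$ of $p^*E$ that coincides with $s$ at $t=0$, vanishes exactly along $W$, and is nowhere zero outside that neighborhood.'' This last clause is not a bookkeeping matter that a bump function settles; it is the entire difficulty. After fixing $\tilde s$ on $N(W)\cup(X\times\{0\})$, you must extend $\tilde s/|\tilde s|$ as a map to $S^{n-1}$ over the $(n+2)$-dimensional complement $C = X\times[0,1]\setminus\mathrm{int}\bigl(N(W)\cup N(X\times\{0\})\bigr)$, and $S^{n-1}$ is only $(n-2)$-connected, so there are a priori obstructions in $H^{i+1}(C,\partial_-C;\pi_i(S^{n-1}))$ for $i=n-1,\dots,n+1$. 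Nothing in your sketch explains why these vanish; indeed $N(W)\cup(X\times\{0\})$ deformation retracts onto $X$ with $2$-cells attached along components of $L$, so the relevant relative cohomology is not obviously trivial.

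The paper avoids this global extension problem by localizing. It isotopes each component $\Sigma_k$ of the null-bordism rel boundary into the slice $X$, attaches a collar, takes a tubular neighborhood and smooths corners to produce a compact codimension-zero submanifold $W_k\subset X$ homotopy equivalent to the surface $\Sigma_k$. Over each such $W_k$ the bundle $E$ is trivial (a surface with boundary is homotopy equivalent to a $1$-complex, so \Cref{lem:spin_unique_framing} applies), and the original section restricts to a map $f_k\colon W_k\to\RR^n$. The point is that the extension problem is now a question about the \emph{relative} framed bordism group $\Fr_1(W_k,\partial W_k)$, and \Cref{lem:extension_of_unit} (the Puppe sequence for $(W_k,\partial W_k)$, using that $n\ge 4$ puts us in the stable range) says precisely that $f_k/|f_k|\colon\partial W_k\to S^{n-1}$ extends over $W_k$ iff $[\partial\Sigma_k,\varphi_k]=0$ in $\Fr_1(W_k,\partial W_k)$---which holds by construction since $\Sigma_k\subset W_k$ is a framed null-bordism. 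Cutting $s$ off outside the $W_k$'s and splicing in these local extensions then kills all zeros. You should either reproduce this localization, or supply a genuine argument for why your global extension exists; ``bump-function interpolation'' on its own does not close the gap.
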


    The article is organized as follows.
    We start by revisiting fundamental definitions and classical computations in \cref{sec:preliminaries}.
    Moving on to \cref{sec:kernel}, we present a geometric interpretation of the kernel $\ker(h)$.
    In \cref{sec:typei}, we prove the first part of \cref{thm:main} concerning manifolds of type~I.
    Shifting focus to type~II manifolds, in \cref{sec:typeii} we discuss the proof of the second part of \cref{thm:main}.
    Dedicating \cref{sec:typeiia} to $\Pinm$ manifolds, we show the interplay between splitting maps and $\Pinm$-structures, and we derive \cref{thm:splitting_pin}.
    Finally, in \cref{sec:vector_bundles}, we explore the implications for vector bundles and validate \cref{thm:vb_typei} as well as \cref{thm:vb_typeiia} both as a corollary of \cref{thm:nonvanishing_section}.
  
    \subsection*{Acknowledgements}
    The authors would like to express their gratitude to Panagiotis Konstantis for his valuable time and insightful discussions regarding his results.
    They also extend their appreciation to Stefan Behrens and Lauran Toussaint for engaging in fruitful discussions that contributed to the project.
    In closing, our thanks go to the referee for their careful reading and insightful comments, which greatly enhanced the quality of this article.

    \section{Preliminaries}
    \label{sec:preliminaries}
    In what follows we assume that $X$ is a closed, connected $(n+1)$-dimensional smooth manifold, where $n \geq 3$.
    If $\iota \colon M \looparrowright X$ is an immersion, we denote by $\normb_\iota$ the associated normal bundle.
    Provided that $\iota \colon M \hookrightarrow X$ is an embedding, and if the context allows, we may suppress the map $\iota$ and denote the normal bundle simply by~$\normb_M$.
    A trivialization of a vector bundle up to a homotopy through trivializations is referred to as a \emph{framing}.

    \subsection*{Pin-structures}

    A $\Pin$-structure generalizes the notion of a $\Spin$-structure to non-orientable vector bundles.
    While $\SO(r)$ is double covered by $\Spin(r)$, the group $\O(r)$ has two possible central $\ZZ_2$ extensions, denoted by $\Pin^{\pm}(r)$.
    Suppose $E \to X$ is a vector bundle over $X$ of rank $r$.
    A $\Pin^{\pm}$-principal bundle over $X$ that lifts the frame bundle of $E$ is called a \emph{$\Pin^\pm$-structure over $E$}.
    In our article, we focus entirely on $\Pin^-$-structures.
    The obstruction to putting a $\Pinm$-structure on $E$ is given by $w_1^2(E) + w_2(E)$.
    If $E$ admits a $\Pinm$-structure, the set of all equivalent $\Pinm$-structures is a $H^1(X; \ZZ_2)$-torsor.
    In other words, the group $H^1(X; \ZZ_2)$ acts simply transitive on the set of equivalent $\Pinm$-structures.
    We denote the (stable) bordism group of $r$-dimensional manifolds with an $\Pinm$-structure on the \emph{tangent bundle} by $\Omega^{\Pinm}_r$.
    An in-depth analysis of both $\Pin^-$ and $\Pin^+$-structures and their low-dimensional bordism groups is done by Kirby and Taylor~\cite{bib:kir_tay:pin}.

    \subsection*{Homology with twisted coefficients}

    Since we are interested in non-orientable manifolds as well, we need to consider \emph{homology with local coefficients}.
    An exposition of their definition and properties, also for cohomology, can be found in Spanier~\cite{bib:spanier:alg_top} as well as Davis and Kirk~\cite{bib:dav_krik:alg_top}.
    Recall that the \emph{orientation sheaf} is defined as the local coefficient system
    \begin{align*}
        o_X = \bigsqcup_{x \in X} H_{n+1}(X, X- \{x\}; \ZZ),
    \end{align*}
    sometimes referred to as \emph{twisted coefficients}.
    Our object of interest is the associated homology $H_*(X; o_X)$.

    We are particularly interested in the following functorial property.
    If $\iota \colon M \looparrowright X$ is an immersion of an $(k+1)$-dimensional manifold, we get an isomorphism of orientation bundles $$\iota^* \left( \bigwedge\nolimits^{n+1} TX \right) \cong \bigwedge\nolimits^{k+1} TM \otimes \bigwedge\nolimits^{n-k} \normb_\iota.$$
    An orientation of $\normb_\iota$ induces an isomorphism $\bigwedge\nolimits^{k+1} TM \cong \iota^* ( \bigwedge\nolimits^{n+1} TX )$, and hence an isomorphism of local coefficients $o_M \cong \iota^*o_X$.
    This then defines a pushforward $\iota_* \colon H_*(M; o_M) \to H_*(X; o_X)$.

    Next, we discuss a universal coefficient theorem for local coefficients.%
    \footnote{
        Spanier mentions the possibility of deducing such a theorem in \cite[p.~283]{bib:spanier:alg_top}, but leaves the statement entirely open as an exercise.
    }
    Suppose $\Gamma$ is a local coefficient system over $X$, and $G$ is an abelian group.
    We obtain a new local coefficient system
    \begin{align*}
        \Hom(\Gamma, G) := \bigsqcup_{x \in X} \Hom(\Gamma(x), G).
    \end{align*}
    A possible formulation of a universal coefficient theorem for local coefficents in cohomology is then as follows:
    \begin{theorem}\label{thm:twisted_uct}
        Let $\Gamma$ be a local system of $X$ whose fibers are free abelian groups.
        Then we have a natural short exact sequence
        \[
            \begin{tikzcd}[sep=tiny]
                0 \arrow[r] & \Ext(H_{k-1}(X;\Gamma), G) \arrow[r] & H^k(X; \Hom(\Gamma, G)) \arrow[r] & \Hom(H_k(X; \Gamma), G) \arrow[r] & 0.
            \end{tikzcd}
        \]
        This sequence splits, but not naturally.
    \end{theorem}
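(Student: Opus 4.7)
The strategy is to reduce the statement to the classical algebraic universal coefficient theorem for a chain complex of free abelian groups. The essential ingredient is a natural chain-level identification
\begin{equation*}
    C^k(X; \Hom(\Gamma, G)) \;\cong\; \Hom_\ZZ\bigl(C_k(X; \Gamma),\, G\bigr),
\end{equation*}
after which the result follows mechanically.

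First I would establish this identification. Unwinding the definitions, a $k$-cochain with values in $\Hom(\Gamma, G)$ is a function $\phi$ assigning to each singular simplex $\sigma\colon\Delta^k\to X$ a homomorphism $\phi(\sigma)\colon\Gamma(\sigma(e_0)) \to G$, while $C_k(X;\Gamma)$ is the direct sum $\bigoplus_\sigma \Gamma(\sigma(e_0))$ and is generated by pairs $(\sigma,\gamma)$ with $\gamma\in\Gamma(\sigma(e_0))$. The evaluation pairing $(\phi, \sigma\otimes\gamma)\mapsto \phi(\sigma)(\gamma)$ produces the desired degreewise isomorphism, and a direct check confirms that the coboundary on the left matches the dual of the boundary on the right, since both are defined through the same parallel transport along the edges of $\Delta^{k+1}$.

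Next I would invoke the hypothesis that each fiber $\Gamma(x)$ is free abelian, which ensures that $C_*(X;\Gamma)$ is a chain complex of free abelian groups in every degree. The classical algebraic universal coefficient theorem for cohomology of a free $\ZZ$-chain complex then produces a naturally split short exact sequence
\begin{equation*}
    0 \to \Ext\bigl(H_{k-1}(C_*(X;\Gamma)), G\bigr) \to H^k\bigl(\Hom_\ZZ(C_*(X;\Gamma), G)\bigr) \to \Hom\bigl(H_k(C_*(X;\Gamma)), G\bigr) \to 0,
\end{equation*}
and transporting along the cochain-level isomorphism rewrites the middle term as $H^k(X;\Hom(\Gamma, G))$. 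The splitting is inherited from the classical result, and just as there, is not natural.

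The main obstacle I expect is the bookkeeping around the identification of (co)chain complexes. Depending on the chosen model for homology with local coefficients—sections of $\sigma^*\Gamma$ and parallel transport on $\Delta^k$ in Davis-Kirk, or $\pi_1(X)$-equivariant chains on the universal cover as in Spanier—one has to verify carefully that evaluation is genuinely a cochain map and that signs and transports align across the two conventions. Once that is settled, the theorem reduces immediately to the classical UCT, and naturality in $(X,\Gamma)$ follows from the naturality of both the chain model and the algebraic UCT.
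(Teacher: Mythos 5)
Your proof is correct and follows essentially the same approach as the paper: establish the canonical cochain isomorphism $C^*(X;\Hom(\Gamma,G)) \cong \Hom(C_*(X;\Gamma),G)$, observe that free fibers make $C_*(X;\Gamma)$ a free chain complex, and then invoke the classical algebraic UCT for free chain complexes. The paper states this more tersely (deferring details to Hatcher), while you spell out the evaluation pairing and the compatibility of the (co)boundaries, but the strategy is identical.
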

    \begin{proof}
        Observe that there is a canonical chain isomorphism
        \begin{align*}
            C^*(X; \Hom(\Gamma, G)) \xrightarrow{\cong} \Hom(C_*(X; \Gamma), G),
        \end{align*}
        where the boundary maps on the right are given by precomposition with $\partial$.
        As the chain complex $C_*(X; \Gamma)$ is free abelian when $\Gamma$ has free abelian fibers, the proof remains the same as for ordinary cohomology.
        We refer to Hatcher~\cite[Theorem~3.2]{bib:hatcher:alg_top} for the details.
    \end{proof}
    Our special case of interest entails $\Gamma = o_X$ and $G = \ZZ_2$.
    Then $\Hom(o_{X,x}, \ZZ_2) \cong \ZZ_2$ for any $x \in X$, and therefore $\Hom(o_X, \ZZ_2) \cong X \times \ZZ_2$.
    For that special case, we obtain the sequence
    \begin{equation}\label{eq:twisted_uct}
        \begin{tikzcd}[sep=small]
            0 \arrow[r] & \Ext(H_{k-1}(X;o_X), \ZZ_2) \arrow[r, "\delta"] & H^k(X; \ZZ_2) \arrow[r, "r^*"] & \Hom(H_k(X; o_X), \ZZ_2) \arrow[r] & 0,
        \end{tikzcd}
    \end{equation}
    where $r \colon H_k(X; o_X) \to H_k(X; \ZZ_2)$ is the $\smod 2$ reduction with $H^k(X; \ZZ_2)$ and $\Hom(H_k(X; \ZZ_2), \ZZ_2)$ identified.

    Analogous to the (untwisted) oriented case, we have \emph{twisted Poincar\'e duality} for any manifold:
    \begin{align*}
        \mathrm{PD}_{\mathrm{tw}} \colon H^k(X; \ZZ) \xrightarrow{\cong} H_{n+1-k}(X; o_X).
    \end{align*}
    To prove this, replace the ordinary cap product
    \begin{align*}
        \cap\colon C_p(X; \ZZ) \times C^q(X; \ZZ) \to C_{p-q}(X;\ZZ)
    \end{align*}
    with the twisted cap product
    \begin{align*}
        \cap\colon C_p(X; o_X) \times C^q(X; \ZZ) \to C_{p-q}(X; o_X).
    \end{align*}
    Every manifold has a preferred \emph{twisted fundamental class} because $$H_{n+1}(X, X - \{x\}; o_X) \cong H_{n+1}(X, X - \{x\}; \ZZ) \otimes H_{n+1}(X, X - \{x\}; \ZZ)$$ has a preferred generator.
    The isomorphism can be understood through a twisted version of the universal coefficient theorem in homology, but we omit the details.
    The Poincar\'e duality isomorphism is then given by the twisted cap product with the twisted fundamental class.

    \subsection*{Computations by Thom}

    It follows from Thom's seminal work~\cite{bib:thom:seminal_work} that every homology class in degree $d$ with integer coefficients can be represented by a closed $d$-dimensional submanifold as long as $d$ remains sufficiently small.
    The twisted case is analogous and is explained by Atiyah~\cite{bib:atiyah:bordism} in more detail.
    The idea is to consider the commutative diagram
    \[
        \begin{tikzcd}
            \left[X, \mathrm{M}\SO(k)\right] \arrow[r, "\cong"] \arrow[d, "u_*"'] & \Or_{n+1-k}(X; o_X) \arrow[d] \\
            H^{k}(X; \ZZ) \arrow[r, "\cong", "\mathrm{PD}_{\mathrm{tw}}"'] & H_{n+1-k}(X; o_X).
        \end{tikzcd}    
    \]
    Here, we denote by $\Or_{d}(X; o_X)$ the set of closed $d$-dimensional submanifolds of $X$ with an orientation of the normal bundle up to bordism in $X \times [0,1]$ with an orientation of the normal bundle.
    The top horizontal arrow is given by the Pontryagin-Thom construction.
    The left vertical arrow is induced by the Thom class ${u \colon \mathrm{M}\SO(k) \to K(\ZZ, k)}$, where $\mathrm{M}\SO(k)$ denotes the Thom space of the universal bundle for $\SO(k)$.
    The right vertical arrow is given by the pushforward of the twisted fundamental class.
    Thom~\cite{bib:thom:seminal_work} has shown that $u \colon \mathrm{M}\SO(k) \to K(\ZZ, k)$ is $(k+2)$-connected.
    That is if we assume $k \geq n-1$, the left vertical arrow is an isomorphism, and we obtain $H_d(X; o_X) \cong \Or_{d}(X; o_X)$ for $d=1,2$.
    Similar results hold for $\ZZ_2$ coefficients.

    \subsection*{Two types of manifolds}

    Throughout this article, we consider two types of manifolds.
    Denote by $(w^2_1 + w_2) \colon H_2(X; \ZZ_2) \to \ZZ_2$ the functional associated with $w_1^2(X) + w_2(X)$ and by $r \colon H_2(X; o_X) \to H_2(X; \ZZ_2)$ the $\smod 2$ reduction.
    We say that
    \begin{align*}
        X ~ \text{is of type I} \quad &:\Longleftrightarrow \quad (w_1^2 + w_2) \circ r \not\equiv 0, \\
        X ~ \text{is of type II} \quad &:\Longleftrightarrow \quad (w_1^2 + w_2) \circ r \equiv 0.
    \end{align*}
    It is clear from the definition that $\Pinm$ manifolds are automatically of type II.
    But not all manifolds of type II are $\Pinm$.
    We say that $X$ is of \emph{type IIa} if it is $\Pinm$ and of \emph{type IIb} if it is of type II but not $\Pinm$.

    We now give a geometric interpretation of this condition in terms of normal bundles of surfaces.
    \begin{lemma}\label{lem:func_measures_w2}
        Let $\iota \colon \Sigma \looparrowright X$ be an immersed closed surface.
        Then $${(w_1^2 + w_2)(\iota_*[\Sigma]_2) = \left\langle w_2(\normb_\iota), [\Sigma]_2 \right\rangle}.$$
    \end{lemma}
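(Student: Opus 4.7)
The plan is to reduce the equality to a straightforward Stiefel--Whitney class calculation on $\Sigma$ using the Whitney sum formula and Wu's formula. The only input is the bundle splitting $\iota^{*}TX \cong T\Sigma \oplus \normb_\iota$ together with the fact that $\Sigma$ is two-dimensional, so Wu's formula takes a particularly simple form.

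First, by the projection formula for the $\smod 2$ fundamental class (which is available since we are working with $\ZZ_2$ coefficients, so no orientation of $\Sigma$ is needed), I rewrite
\begin{equation*}
    (w_1^2 + w_2)(\iota_*[\Sigma]_2)
    = \bigl\langle \iota^*(w_1^2(X) + w_2(X)),\, [\Sigma]_2 \bigr\rangle.
\end{equation*}
Applying the Whitney sum formula to $\iota^{*}TX \cong T\Sigma \oplus \normb_\iota$ and using that squaring is additive mod $2$, I expand
\begin{equation*}
    \iota^*\bigl(w_1^2(X) + w_2(X)\bigr)
    = w_1^2(T\Sigma) + w_2(T\Sigma) + w_1(T\Sigma)\,w_1(\normb_\iota) + w_1^2(\normb_\iota) + w_2(\normb_\iota).
\end{equation*}

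Next I kill the unwanted terms using Wu's formula on the $2$-manifold $\Sigma$, namely $\langle \sq^1 x, [\Sigma]_2\rangle = \langle w_1(T\Sigma) \cup x, [\Sigma]_2\rangle$ for $x \in H^1(\Sigma;\ZZ_2)$, together with $\sq^1 x = x^2$ on degree-one classes. Applying this with $x = w_1(T\Sigma)$ gives $\langle w_1^2(T\Sigma) + w_2(T\Sigma), [\Sigma]_2\rangle = 0$, since on a closed surface $w_2(T\Sigma) = \sq^1 w_1(T\Sigma) = w_1^2(T\Sigma)$ as classes, which in characteristic $2$ means their sum pairs to $0$. Similarly, applying Wu's formula with $x = w_1(\normb_\iota)$ yields $\langle w_1^2(\normb_\iota) + w_1(T\Sigma)\,w_1(\normb_\iota), [\Sigma]_2 \rangle = 0$. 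The only surviving contribution is $\langle w_2(\normb_\iota), [\Sigma]_2\rangle$, as claimed.

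There is no real obstacle here; the argument is essentially a bookkeeping exercise in mod~$2$ characteristic classes. The only point requiring a moment's care is that $\Sigma$ need not be orientable, so one must work throughout with the $\ZZ_2$-fundamental class and invoke Wu's formula in its mod~$2$ form, which applies to any closed smooth manifold regardless of orientability.
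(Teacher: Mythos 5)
Your proof is correct and follows essentially the same route as the paper: pull back via $\iota$, apply the Whitney sum formula to $\iota^*TX \cong T\Sigma \oplus \normb_\iota$, and kill the extra terms using Wu's formula on the closed surface $\Sigma$. The only cosmetic difference is that the paper justifies $w_1^2(\Sigma) + w_2(\Sigma) = 0$ by citing that all surfaces are $\Pinm$, whereas you derive the same identity directly from Wu's relation $w_2 = \sq^1 w_1 = w_1^2$; these are the same fact.
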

    \begin{proof}
        A simple computation reveals
        \begin{align*}
            (w_1^2 + w_2)(\iota_*[\Sigma]_2) &= \left\langle w_1^2(X) + w_2(X), \iota_*[\Sigma]_2 \right\rangle \\
                    &= \left\langle \iota^*\!\left(w_1^2(X) + w_2(X)\right), [\Sigma]_2 \right\rangle \\
                    &= \left\langle w_1^2(\Sigma) + w_2(\Sigma) + w_2(\normb_\iota) + w_1(\Sigma) w_1(\normb_\iota) + w^2_1(\normb_\iota), [\Sigma]_2 \right\rangle \\
                    &= \left\langle w_2(\normb_\iota) + w_1(\Sigma) w_1(\normb_\iota) + w^2_1(\normb_\iota), [\Sigma]_2 \right\rangle \\
                    &= \left\langle w_2(\normb_\iota), [\Sigma]_2 \right\rangle.
        \end{align*}
        Here we used the fact that $\iota^*(TX) \cong T \Sigma \oplus \normb_\iota$, and that all surfaces are $\Pinm$, which means $w_1^2(\Sigma) + w_2(\Sigma) = 0$.
        The last step follows from Wu's formula which implies $w_1(\Sigma) w_1(\normb_\iota) = \sq^1(w_1(\normb_\iota))$ and hence $w_1(\Sigma) w_1(\normb_\iota) = w^2_1(\normb_\iota)$.
    \end{proof}

    \begin{prop}\label{prop:geom_typei}
        If there exist an immersed closed surface $\iota \colon \Sigma \looparrowright X$ with $w_1(\normb_\iota) = 0$ and $w_2(\normb_\iota) \neq 0$, then $X$ is of type~I.
        Conversely, if $X$ is of type I, then there exists an embedded closed surface $\iota \colon \Sigma \hookrightarrow X$ such that $w_1(\normb_\iota) = 0$ and $w_2(\normb_\iota) \neq 0$.
    \end{prop}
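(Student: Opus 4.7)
The plan is to derive both implications from \cref{lem:func_measures_w2} together with the Thom-type representability result $H_2(X;o_X) \cong \Or_2(X;o_X)$ noted earlier in this section (valid for $d=2$ since the corresponding $k=n-1$ satisfies $k\geq n-1$). The two directions essentially become instances of the identity $\langle w_2(\normb_\iota),[\Sigma]_2\rangle = (w_1^2+w_2)(\iota_*[\Sigma]_2)$ once we match up fundamental classes correctly.

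For the forward direction, suppose $\iota\colon\Sigma\looparrowright X$ satisfies $w_1(\normb_\iota)=0$ and $w_2(\normb_\iota)\neq 0$. First I would use orientability of $\normb_\iota$ to pick an orientation; via the canonical isomorphism $o_\Sigma\cong\iota^*o_X$ discussed in the preliminaries, this produces a twisted fundamental class $[\Sigma;o_\Sigma]\in H_2(\Sigma;o_\Sigma)$, whose pushforward yields a class $\alpha:=\iota_*[\Sigma;o_\Sigma]\in H_2(X;o_X)$. By naturality of the mod~2 reduction, $r(\alpha)=\iota_*[\Sigma]_2$. Applying \cref{lem:func_measures_w2} then gives
\[
((w_1^2+w_2)\circ r)(\alpha) = (w_1^2+w_2)(\iota_*[\Sigma]_2) = \langle w_2(\normb_\iota),[\Sigma]_2\rangle \neq 0,
\]
so $(w_1^2+w_2)\circ r\not\equiv 0$ and $X$ is of type~I.

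For the converse, assume $X$ is of type~I and pick $\alpha\in H_2(X;o_X)$ with $((w_1^2+w_2)\circ r)(\alpha)\neq 0$. By the Thom isomorphism $H_2(X;o_X)\cong\Or_2(X;o_X)$, I can represent $\alpha$ by an embedded closed surface $\iota\colon\Sigma\hookrightarrow X$ together with an orientation of $\normb_\iota$, so in particular $w_1(\normb_\iota)=0$. Under this identification $r(\alpha)=\iota_*[\Sigma]_2$, hence \cref{lem:func_measures_w2} again yields
\[
\langle w_2(\normb_\iota),[\Sigma]_2\rangle = (w_1^2+w_2)(\iota_*[\Sigma]_2) = ((w_1^2+w_2)\circ r)(\alpha) \neq 0,
\]
forcing $w_2(\normb_\iota)\neq 0$.

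The only delicate point, which I would want to spell out carefully, is the first step: that the choice of orientation of $\normb_\iota$ really does produce a well-defined pushforward in $H_2(X;o_X)$ lifting $\iota_*[\Sigma]_2$. This is immediate for embeddings, and for immersions it follows because $\Sigma$ is genuinely a manifold (the orientation of $\normb_\iota$ together with $\iota^*o_X$ still trivialises $o_\Sigma$ regardless of self-intersections of $\iota$). After that, the entire argument is a mechanical application of \cref{lem:func_measures_w2} combined with Thom representability, so I do not anticipate any further obstacle.
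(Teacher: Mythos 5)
Your proof is correct and follows essentially the same route as the paper: the forward direction is exactly what the paper means by ``functoriality in twisted coefficients'' combined with \cref{lem:func_measures_w2} (the pushforward $\iota_*\colon H_*(\Sigma;o_\Sigma)\to H_*(X;o_X)$ for immersions with oriented normal bundle is set up verbatim in the preliminaries), and the converse uses the same Thom representability of $H_2(X;o_X)$ by normally oriented embedded surfaces. You have simply spelled out the steps the paper leaves implicit; the ``delicate point'' you flag is already handled by the paper's construction of $\iota_*$ for immersions.
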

    \begin{proof}
        Suppose $\iota \colon \Sigma \looparrowright X$ is an immersed closed surface with $w_1(\normb_\iota)=0$.
        We have seen in \cref{lem:func_measures_w2} that the functional $w_1^2+w_2$ measures the triviality of the second Stiefel-Whitney class of the normal bundle $w_2(\normb_\iota)$.
        Together with functoriality in twisted coefficients, this shows the first implication.
        The converse follows from the fact that $H_2(X;o_X)$ is generated by embedded closed surfaces with an orientation of the normal bundle as discussed in the previous paragraph.
    \end{proof}
    \begin{remark}
        It is worth mentioning that \cref{prop:geom_typei} implies that if there exists an immersed closed surface $\iota \colon \Sigma \looparrowright X$ with $w_1(\normb_\iota) = 0$ and $w_2(\normb_\iota) \neq 0$, then there automatically exists an embedded one with that property.
    \end{remark}
    Equivalently, we obtain the geometric statement for manifolds of type~II.
    \begin{corollary}\label{cor:geom_typeii}
        If all embedded surfaces $\iota \colon \Sigma \hookrightarrow X$ with $w_1(\normb_\iota) = 0$ have $w_2(\normb_\iota) = 0$, then $X$ is of type~II.
        Conversely, if $X$ is of type~II, then all immersed surfaces $\iota \colon \Sigma \looparrowright X$ with $w_1(\normb_\iota) = 0$ must have $w_2(\normb_\iota) = 0$.
    \end{corollary}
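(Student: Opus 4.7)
The statement of \cref{cor:geom_typeii} is logically the contrapositive of \cref{prop:geom_typei}, so my plan is simply to obtain both implications by negating the two parts of the proposition and invoking the dichotomy between types I and II.

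More concretely, the first implication of the corollary is the contrapositive of the second implication of \cref{prop:geom_typei}. Assume every embedded closed surface $\iota \colon \Sigma \hookrightarrow X$ with $w_1(\normb_\iota) = 0$ satisfies $w_2(\normb_\iota) = 0$. If $X$ were of type~I, \cref{prop:geom_typei} would produce an embedded closed surface with $w_1(\normb_\iota) = 0$ and $w_2(\normb_\iota) \neq 0$, contradicting the hypothesis. Since being of type~I and being of type~II are mutually exclusive by definition, $X$ must be of type~II.

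For the converse, I would dually negate the first part of \cref{prop:geom_typei}. Suppose $X$ is of type~II and that there exists an immersed closed surface $\iota \colon \Sigma \looparrowright X$ with $w_1(\normb_\iota) = 0$ and $w_2(\normb_\iota) \neq 0$. Then \cref{prop:geom_typei} would force $X$ to be of type~I, again a contradiction. Hence every such immersion must satisfy $w_2(\normb_\iota) = 0$.

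There is no substantive obstacle here; the entire content of the corollary is already encoded in \cref{prop:geom_typei}, and what remains is only the logical bookkeeping of contrapositives together with the trivial observation that the types I and II partition the class of admissible manifolds.
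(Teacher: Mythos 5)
Your proposal is correct and matches the paper's intent exactly: the paper states the corollary without a separate proof, prefacing it with ``Equivalently, we obtain the geometric statement for manifolds of type~II,'' which is precisely the contrapositive reading of \cref{prop:geom_typei} you spelled out. Your bookkeeping is careful (matching the embedded case to the type~I existence direction and the immersed case to the sufficiency direction), so there is nothing to add.
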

    When $X$ is a 4-manifold, the class $w_2(\normb_\Sigma)$ reports $\smod 2$ self-intersection for normally oriented surfaces $\Sigma \subset X$.
    If $X$ is in addition oriented, type~I is equivalent to $X$ being \emph{odd}, and type~II is equivalent to $X$ being \emph{even}.
    For higher dimensions, that is if $n \geq 4$, we can give another geometric description.
    The key ingredient is the following remark.
    \begin{remark}\label{rmk:vb_trivial_sw_class}
        Recall that any vector bundle $E$ of rank $n \geq 3$ over a 2-dimensional cell complex $\Sigma$ is trivializable if and only if $w_1(E)$ and $w_2(E)$ vanish.
        This can be seen as follows.
        Since $n \geq 3$, the second Stiefel Whitney class $w_2(E)$ provides a full obstruction of finding an $(n-1)$-frame over~$\Sigma$, see Milnor and Stasheff~\cite[§13]{bib:mil_stash:char_classes}.
        Then the $(n-1)$-frame can be completed to an $n$-frame over $\Sigma$ if and only if the bundle is orientable, or equivalently if and only if $w_1(E)$ vanishes.
    \end{remark}
    With that remark, we deduce the following corollary:
    \begin{corollary}
        Let $n \geq 4$.
        Then $X$ is of type I if and only if there is a closed surface $\Sigma \subset X$ such that $\normb_\Sigma$ is orientable but not trivializable.
        Equivalently, $X$ is of type II if and only if all closed surfaces $\Sigma \subset X$ for which $\normb_\Sigma$ is orientable have that $\normb_\Sigma$ is trivializable.
    \end{corollary}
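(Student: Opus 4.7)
The plan is to deduce the corollary directly from the two ingredients established just above it: the geometric characterization of type~I in \cref{prop:geom_typei} and the classification of rank~$\geq 3$ vector bundles over $2$-complexes in \cref{rmk:vb_trivial_sw_class}. Since $\dim X = n+1$ and $\dim \Sigma = 2$, the normal bundle $\nu_\Sigma$ of any closed surface $\Sigma \subset X$ has rank $n-1$, which is $\geq 3$ precisely under the hypothesis $n \geq 4$. Thus \cref{rmk:vb_trivial_sw_class} applies to $\nu_\Sigma$, and trivializability of $\nu_\Sigma$ is equivalent to the vanishing of both $w_1(\nu_\Sigma)$ and $w_2(\nu_\Sigma)$.

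For the forward direction I would proceed as follows. Assume $X$ is of type~I. By \cref{prop:geom_typei}, there is an embedded closed surface $\iota \colon \Sigma \hookrightarrow X$ with $w_1(\nu_\iota) = 0$ and $w_2(\nu_\iota) \neq 0$. The first condition says $\nu_\iota$ is orientable, and the second, via \cref{rmk:vb_trivial_sw_class}, rules out trivializability. Conversely, suppose there is a closed surface $\Sigma \subset X$ whose normal bundle is orientable but not trivializable. Orientability gives $w_1(\nu_\Sigma) = 0$, and then non-trivializability combined with $w_1(\nu_\Sigma) = 0$ forces $w_2(\nu_\Sigma) \neq 0$ by \cref{rmk:vb_trivial_sw_class}. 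Invoking \cref{prop:geom_typei} once more yields that $X$ is of type~I.

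The equivalent statement for type~II is then just the contrapositive, since type~II was defined as the negation of type~I. There is no substantive obstacle: the entire content of the corollary is the translation provided by \cref{rmk:vb_trivial_sw_class}, which upgrades the Stiefel--Whitney characterization into a statement about the normal bundle itself, and the only thing to check is the rank hypothesis $n-1 \geq 3$, which matches the assumption $n \geq 4$.
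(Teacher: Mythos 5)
Your proof is correct and follows exactly the route the paper intends: the corollary is deduced from \cref{prop:geom_typei} by translating the Stiefel--Whitney conditions $w_1(\nu_\Sigma)=0$, $w_2(\nu_\Sigma)\neq 0$ into the statement ``orientable but not trivializable'' via \cref{rmk:vb_trivial_sw_class}, using that $\mathrm{rank}\,\nu_\Sigma = n-1 \geq 3$ when $n \geq 4$. The type~II statement is, as you say, just the contrapositive.
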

    
    \subsection*{The forgetful map}

    For any $k$, there is an obvious map ${\Fr_k(X) \to \Or_k(X; o_X)}$ that forgets the framing but remembers the orientation of the normal bundle.
    For $k=1$, we can identify $H_1(X; o_X) \cong \Or_{1}(X; o_X)$, and hence we obtain a group homomorphism $$h \colon \Fr_1(X) \to H_1(X; o_X).$$
    Since any orientable vector bundle over a 1-dimensional cell complex is trivializable, this map is surjective.
    Therefore, $\Fr_1(X)$ always fits into the short exact sequence~\eqref{eq:ses:framed}.
    In the following sections, we determine $\ker(h)$ and classify the extension with geometric methods.

    \section{The kernel of the forgetful map}
    \label{sec:kernel}

    To gain insights into the kernel of $h \colon \Fr_1(X) \to H_1(X; o_X)$, we need to familiarize ourselves with certain properties of framings over surfaces with boundary.
    For what follows, we call a framing \emph{positive} if it is compatible with a given orientation.
    \begin{lemma}\label{lem:framings_first_cohomology}
        Let $Y$ be a 2-dimensional cell complex and $E \to Y$ be a trivializable vector bundle of rank $n \geq 3$ with a given orientation.
        Then distinct positive framings of $E$ are in one-to-one correspondence with elements of $H^1(Y; \ZZ_2)$.
    \end{lemma}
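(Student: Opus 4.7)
The plan is to fix a reference positive framing $\framing_0$ of $E$ and compare all other positive framings to it. Given another positive framing $\framing$, at each $y \in Y$ the two framings differ by a unique element $g(y) \in \SO(n)$ (we land in $\SO(n)$ rather than $\O(n)$ precisely because both framings are compatible with the chosen orientation of $E$). This assignment yields a continuous map $g \colon Y \to \SO(n)$. Homotopies through positive framings correspond bijectively to homotopies of $g$, so sending $\framing \mapsto [g]$ defines a bijection between the set of positive framings of $E$ (up to homotopy through framings) and the unpointed homotopy set $[Y, \SO(n)]$, with $\framing_0$ corresponding to the class of the constant map.

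It remains to compute $[Y, \SO(n)]$ when $\dim Y = 2$ and $n \geq 3$. For such $n$ one has $\pi_0(\SO(n)) = 0$, $\pi_1(\SO(n)) = \ZZ_2$, and $\pi_2(\SO(n)) = 0$, so the 2-type of $\SO(n)$ agrees with $K(\ZZ_2, 1)$. Cellular obstruction theory applied to a 2-dimensional cell complex $Y$ then produces a natural bijection
\[
    [Y, \SO(n)] \;\cong\; [Y, K(\ZZ_2, 1)] \;\cong\; H^1(Y; \ZZ_2).
\]
Since $\pi_1(\SO(n)) = \ZZ_2$ is abelian and $\SO(n)$ is connected, unpointed and pointed homotopy classes coincide, so no basepoint data is lost in the identification.

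The main technical point to verify is that the equivalence relation of \emph{homotopy through positive framings of $E$} on trivializations corresponds exactly to ordinary homotopy of the difference maps $Y \to \SO(n)$; this is a direct unwinding of definitions once one notes that homotopies of framings are themselves framings of the pullback $E \times [0,1]$ over $Y \times [0,1]$. The one substantive step is the obstruction-theoretic computation of $[Y, \SO(n)]$: the primary obstruction to lifting a given map on the $1$-skeleton over the $2$-cells lies in $H^2(Y; \pi_2(\SO(n))) = 0$, and the ambiguity of extensions is measured by $H^1(Y; \pi_1(\SO(n))) = H^1(Y; \ZZ_2)$, yielding the desired bijection.
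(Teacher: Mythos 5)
Your argument is correct and takes essentially the same route as the paper: fix a reference framing to identify positive framings with $[Y, \SO(n)]$, then use that $\SO(n)$ has the homotopy $2$-type of $K(\ZZ_2,1)$ (equivalently, that a map $\SO(n) \to K(\ZZ_2,1)$ representing the generator of $H^1(\SO(n);\ZZ_2)$ is $3$-connected) to conclude $[Y,\SO(n)] \cong H^1(Y;\ZZ_2)$ for a $2$-complex $Y$.

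One remark on the final paragraph, which you flag as the substantive step: the formula ``the primary obstruction to extending a map $Y^{(1)} \to \SO(n)$ over the $2$-cells lies in $H^2(Y;\pi_2(\SO(n)))$'' is not right. For extending a map from $Y^{(k)}$ over the $(k+1)$-cells, the obstruction cocycle takes values in $\pi_k$ of the target, so the relevant group here is $H^2(Y;\pi_1(\SO(n))) = H^2(Y;\ZZ_2)$, which is generally nonzero. The vanishing of $\pi_2(\SO(n))$ enters differently: it ensures the map to $K(\ZZ_2,1)$ is $3$-connected (so lifts along the fibration $\Spin(n) \to \SO(n) \to K(\ZZ_2,1)$ exist and are unique over a $2$-complex since the fiber is $2$-connected), or equivalently that homotopy classes of maps from a $2$-complex see only the first Postnikov stage. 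Your second paragraph already says this correctly, so the conclusion stands; it is the obstruction-theoretic gloss in the last paragraph that conflates the two mechanisms.
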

    \begin{proof}
        All distinct positive framings of $E$ are in one-to-one correspondence with homotopy classes in $[Y, \SO(n)]$ via composition with a fixed positive framing.
        We represent the generator of $H^1(\SO(n), \ZZ_2) \cong \ZZ_2$ by a map $u \colon \SO(n) \to K(\ZZ_2, 1)$.
        Since ${\pi_2(\SO(n))=0}$, the map $u$ is 3-connected.
        Hence, the induced map $u_* \colon [Y, \SO(n)] \to H^1(Y; \ZZ_2)$ is an isomorphism.
    \end{proof}
	\begin{prop}\label{prop:unique_framing}
	    Let $\Sigma$ be a compact connected surface with boundary components $\partial\Sigma = C \sqcup C_1 \sqcup \dots \sqcup C_k$, where $k \geq 0$.
	    Let $E$ be an oriented vector bundle of rank $n \geq 3$ over~$\Sigma$.
	    Then any choice of positive framing of $E$ over $C_1, \dots, C_k$ extends to a positive framing over~$\Sigma$.
        Moreover, the induced framing over $C$ of any such extension only depends on the choice made over $C_1, \dots, C_k$.
	\end{prop}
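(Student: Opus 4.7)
The plan is to reduce both assertions to obstruction-theoretic problems about maps into $\SO(n)$ and then compute the relevant relative cohomology groups using Lefschetz duality. Because $\Sigma$ has nonempty boundary, it has the homotopy type of a $1$-dimensional complex, and in particular $H^2(\Sigma; \ZZ_2) = 0$; together with $w_1(E) = 0$, \cref{rmk:vb_trivial_sw_class} makes $E$ trivializable. After fixing a reference positive trivialization, positive framings of $E$ over a subspace $Y \subseteq \Sigma$ correspond to homotopy classes in $[Y, \SO(n)]$, so the task becomes extending a map $f \colon C_1 \sqcup \cdots \sqcup C_k \to \SO(n)$ to $\Sigma$ and controlling how its restriction to $C$ depends on the chosen extension.

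For existence, I would apply standard relative obstruction theory to the pair $(\Sigma, C_1 \sqcup \cdots \sqcup C_k)$. As $\pi_0(\SO(n)) = 0$ and $\Sigma$ has no cells above dimension two, the only potentially nonzero obstruction sits in $H^2(\Sigma, C_1 \sqcup \cdots \sqcup C_k; \pi_1(\SO(n))) = H^2(\Sigma, C_1 \sqcup \cdots \sqcup C_k; \ZZ_2)$. Viewing $\Sigma$ as a compact surface whose boundary is partitioned into $C$ and $C_1 \sqcup \cdots \sqcup C_k$, Lefschetz duality with $\ZZ_2$-coefficients identifies this group with $H_0(\Sigma, C; \ZZ_2)$, which vanishes because $\Sigma$ is connected and $C$ is nonempty. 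The obstruction therefore disappears and every choice of positive framing over $C_1 \sqcup \cdots \sqcup C_k$ extends over $\Sigma$.

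For uniqueness, any two extensions agreeing on $C_1 \sqcup \cdots \sqcup C_k$ differ by a map $\Sigma \to \SO(n)$ sending $C_1 \sqcup \cdots \sqcup C_k$ to the identity $e$, hence by an element of $[(\Sigma, C_1 \sqcup \cdots \sqcup C_k), (\SO(n), e)]$. Running the argument of \cref{lem:framings_first_cohomology} in the relative setting---still using that $\SO(n) \to K(\ZZ_2, 1)$ is $3$-connected---identifies this set with $H^1(\Sigma, C_1 \sqcup \cdots \sqcup C_k; \ZZ_2)$. It then suffices to show that the restriction $H^1(\Sigma, C_1 \sqcup \cdots \sqcup C_k; \ZZ_2) \to H^1(C; \ZZ_2)$ is zero. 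The long exact sequence of the triple $(\Sigma, \partial \Sigma, C_1 \sqcup \cdots \sqcup C_k)$, combined with the excision $H^1(\partial \Sigma, C_1 \sqcup \cdots \sqcup C_k; \ZZ_2) \cong H^1(C; \ZZ_2)$, reduces the desired vanishing to surjectivity of $H^1(\Sigma, \partial \Sigma; \ZZ_2) \to H^1(\Sigma, C_1 \sqcup \cdots \sqcup C_k; \ZZ_2)$. A second application of Lefschetz duality converts this into the map $H_1(\Sigma; \ZZ_2) \to H_1(\Sigma, C; \ZZ_2)$ from the long exact sequence of the pair $(\Sigma, C)$, which is surjective because $C$ is connected and nonempty, so $H_0(C; \ZZ_2) \to H_0(\Sigma; \ZZ_2)$ is an isomorphism.

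The main care needed is in the bookkeeping of Lefschetz duality: one must verify that the duality isomorphisms interchange the inclusion-of-pairs restriction maps with the maps appearing in the long exact sequences of pairs and triples, so that ``the dual map is surjective'' really translates into ``the restriction map is zero.'' Once this naturality is in hand, both assertions collapse to the two elementary exact-sequence computations above.
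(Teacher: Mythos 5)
Your proof is correct, but the route you take for the uniqueness assertion is genuinely different from (and heavier than) the paper's. Both arguments frame the problem via \cref{lem:framings_first_cohomology}, identifying positive framings over subcomplexes of $\Sigma$ with $H^1(-;\ZZ_2)$ after fixing a reference framing; and both existence arguments hinge on the vanishing of $H^2(\Sigma, C_1 \sqcup \cdots \sqcup C_k; \ZZ_2)$ (the paper simply records this in its exact sequence, while you derive it via Lefschetz duality $H^2(\Sigma, Y;\ZZ_2) \cong H_0(\Sigma, C;\ZZ_2) = 0$). The divergence is in uniqueness. You translate the claim into the vanishing of the restriction map $H^1(\Sigma, Y;\ZZ_2) \to H^1(C;\ZZ_2)$, then run the long exact sequence of the triple $(\Sigma, \partial\Sigma, Y)$, excision, and a second application of Lefschetz duality to land on the surjectivity of $H_1(\Sigma;\ZZ_2) \to H_1(\Sigma, C;\ZZ_2)$. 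The paper instead makes a single geometric observation: since $\Sigma$ is a bordism between $C$ and $Y$, one has $\jmath_*[C]_2 = \iota_*[Y]_2$ in $H_1(\Sigma;\ZZ_2)$, and because $H^1(C;\ZZ_2)\cong\ZZ_2$ is detected by evaluation on $[C]_2$, the restriction $\jmath^*\alpha$ is determined by $\alpha(\iota_*[Y]_2) = \iota^*\alpha([Y]_2)$, i.e.\ by the data on $Y$. Your approach is sound and actually makes explicit a vanishing that the paper takes for granted, but it requires carefully verifying the naturality of Lefschetz duality with respect to changing boundary decompositions — a real (if standard) bookkeeping burden, which you rightly flag. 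The paper's bordism observation bypasses all of that with a two-line computation, at the cost of being specific to $H^1$ of a circle; your method would generalize more readily to higher-dimensional boundary pieces.
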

    \begin{proof}
        Since $E$ is oriented and $\Sigma$ is homotopy equivalent to a 1-dimensional cell complex, we may fix a positive framing of $E$.
        Then by \cref{lem:framings_first_cohomology}, positive framings over any subcomplex $\iota \colon Y \hookrightarrow \Sigma$ are in one-to-one correspondence with elements in $H^1(Y; \ZZ_2)$.
        The restriction of a framing over $\Sigma$ to a subcomplex $Y$ is then given by the induced map $\iota^* \colon H^1(\Sigma; \ZZ_2) \to H^1(Y; \ZZ_2)$.
        If we set $Y = C_1 \sqcup \dots \sqcup C_k$, then we have the following short exact sequence in relative cohomology:
        \[
        \begin{tikzcd}
            H^1(\Sigma; \ZZ_2) \arrow[r, "\iota^*"] & H^1(Y; \ZZ_2) \arrow[r] & {H^2(\Sigma, Y ; \ZZ_2) \cong 0}\mathrlap{.}
        \end{tikzcd}
        \]
        This sequence shows that all positive framings over $Y$ can be extended across $\Sigma$ and further restricted to $C$ via $\jmath^* \colon H^1(\Sigma;\ZZ_2) \to H^1(C; \ZZ_2)$, where $\jmath \colon C \hookrightarrow \Sigma$ is the inclusion of $C$.
        For uniqueness, suppose $\alpha \in H^1(\Sigma; \ZZ_2) \cong \Hom(H_1(\Sigma; \ZZ_2), \ZZ_2)$.
        Then the restriction $\jmath^*\alpha$ is determined by
        \begin{align*}
            (\jmath^* \alpha)([C]_2) &= \alpha(\jmath_*[C]_2) \\
                &= \alpha(\iota_*[Y]_2) \\
                &= \iota^*\alpha([Y]_2),
        \end{align*}
        and hence only determined by the restriction $\iota^*\alpha$.
        The second equality follows from $\jmath_*[C]_2 = \iota_* [Y]_2 \in H_1(\Sigma; \ZZ_2)$ since $\Sigma$ is a bordism between $C$ and $Y = C_1 \sqcup \dots \sqcup C_k$.
    \end{proof}
    The case $k=0$ deserves special attention.
    \begin{corollary}\label{cor:unique_framing_one_boundary}
        Under the assumptions of \cref{prop:unique_framing}, if $\Sigma$ has only one boundary component $C$, then there is exactly one framing over $C$ that extends over the whole surface~$\Sigma$.
    \end{corollary}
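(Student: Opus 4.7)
The plan is to derive the corollary as the specialization $k=0$ of \cref{prop:unique_framing}, where the ``choice of positive framing over $C_1,\dots,C_k$'' is trivially the empty choice, so there is a unique such choice and hence a unique induced framing over $C$. This means essentially nothing new needs to be argued; I would just recast the $k=0$ case in the language of the argument already given in the proof of \cref{prop:unique_framing}.

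For existence, I would first note that since $\Sigma$ is a compact connected surface with nonempty boundary, it is homotopy equivalent to a $1$-dimensional CW complex, and every oriented rank-$n$ bundle over such a complex is trivializable. Hence $E$ admits a positive framing over $\Sigma$, and its restriction to $C$ provides a positive framing over $C$ which, by construction, extends across $\Sigma$.

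For uniqueness I would argue as follows. By \cref{lem:framings_first_cohomology}, any two positive framings of $E$ over $\Sigma$ differ by a unique class $\alpha \in H^1(\Sigma;\ZZ_2)$, and correspondingly their restrictions to $C$ differ by $\jmath^*\alpha \in H^1(C;\ZZ_2)$, where $\jmath \colon C \hookrightarrow \Sigma$ denotes the inclusion. Since $C$ is a single circle, $H_1(C;\ZZ_2)\cong\ZZ_2$ is generated by $[C]_2$, and since $C = \partial\Sigma$ we have $\jmath_*[C]_2 = 0$ in $H_1(\Sigma;\ZZ_2)$. Therefore
\[
(\jmath^*\alpha)([C]_2) \;=\; \alpha(\jmath_*[C]_2) \;=\; 0,
\]
so $\jmath^*\alpha = 0$ and the two restrictions over $C$ agree.

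The main ``obstacle'' is really only stylistic: one has to reconcile the notation of \cref{prop:unique_framing} with the degenerate case $Y = \emptyset$ (so that $H^1(Y;\ZZ_2) = 0$ and the uniqueness input reduces to the vanishing of $\jmath_*[C]_2$). No genuinely new computation is required beyond what is already contained in the proof of \cref{prop:unique_framing}.
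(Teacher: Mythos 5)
Your proposal is correct and takes essentially the same approach as the paper, which simply presents the corollary as the $k=0$ specialization of \cref{prop:unique_framing} without a separate argument. Your observation that uniqueness reduces to $\jmath_*[C]_2 = 0$ (since $C$ bounds $\Sigma$) is exactly the paper's uniqueness step with $Y = \emptyset$.
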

    We wish to isotope bordisms in $X \times [0,1]$ to cut off disks whose boundaries respect slices.
    If the projection to $[0,1]$ is Morse, this is immediate, and the subsequent proposition explains to us how we can achieve that.
	\begin{lemma}\label{lem:isotope_bordism}
		Let $\Sigma$ be a compact surface and $\iota_0 \colon \Sigma \hookrightarrow X \times [0,1]$ be an embedding mapping boundaries to boundaries.
        Then there is another embedding $\iota_1 \colon \Sigma \to X \times [0,1]$ mapping boundaries to boundaries such that
		\begin{enumerate}[i)]
			\item $\iota_0$ and $\iota_1$ are isotopic relative to $\partial \Sigma$,
			\item $(\pr_2 \circ \iota_1) \colon \Sigma \to [0,1]$ is a Morse function.
		\end{enumerate}
	\end{lemma}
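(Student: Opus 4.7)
The plan is to obtain $\iota_1$ as a small generic perturbation of $\iota_0$ within the space of embeddings that agree with $\iota_0$ near $\partial \Sigma$, and to construct the rel-boundary isotopy from the closeness of the two embeddings. I would begin by arranging, via an initial small rel-boundary isotopy, that $\iota_0$ has product form on a collar $\partial \Sigma \times [0, \delta) \hookrightarrow \Sigma$, namely $(p, s) \mapsto (\iota_0(p, 0), s)$ or $(\iota_0(p, 0), 1-s)$ depending on which boundary of $X \times [0,1]$ the relevant component of $\partial \Sigma$ lies in. On this collar the height $\pr_2 \circ \iota_0$ is already regular, so only the interior needs modification.

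I would then identify a tubular neighborhood of $\iota_0(\Sigma)$ in $X \times [0,1]$ with a disk bundle in $\normb_{\iota_0}$ via the exponential map of an auxiliary metric, and parametrize nearby embeddings by small smooth sections $\sigma$ of $\normb_{\iota_0}$ that vanish on the collar. Any such $\sigma$ yields an embedding $\iota_\sigma := \exp \circ\, \sigma$ together with a canonical straight-line rel-boundary isotopy $\{\iota_{t\sigma}\}_{t \in [0,1]}$ from $\iota_0$ to $\iota_\sigma$, which remains a family of embeddings once $\sigma$ is small enough by openness of embeddings in the $C^1$-topology and compactness of $\Sigma$.

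The main step, and the principal technical obstacle, is the genericity claim: the set of admissible sections $\sigma$ for which $f_\sigma := \pr_2 \circ \iota_\sigma$ is Morse on the interior of $\Sigma$ is dense in every $C^\infty$-neighborhood of $0$. This is a parametric jet-transversality statement. Away from critical points of $f_0 := \pr_2 \circ \iota_0$ the vertical vector field $\partial_t$ has nontrivial projection into $\normb_{\iota_0}$, so $\sigma$ can realize arbitrary first-order modifications of the height locally; at each critical point of $f_0$, choosing the auxiliary metric so that $\normb_{\iota_0}$ contains $\partial_t$ there lets $\sigma$ realize arbitrary symmetric modifications of the Hessian. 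Applying the standard transversality density theorem to the $1$-jet $j^1 f_\sigma \colon \Sigma \to T^*\Sigma$ with respect to the zero section of $T^*\Sigma$ delivers a generic small $\sigma$ with $f_\sigma$ Morse, and setting $\iota_1 := \iota_\sigma$ for any such $\sigma$ yields both conclusions of the lemma simultaneously, with the isotopy $\{\iota_{t\sigma}\}$ being constant on the collar hence rel $\partial \Sigma$.
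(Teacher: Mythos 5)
Your proposal is correct but takes a genuinely different route from the paper. The paper never invokes transversality for families of embeddings; instead it works directly with the abstract surface: choose a Morse function $f \colon \Sigma \to [0,1]$ agreeing with $\pr_2 \circ \iota_0$ on $\partial\Sigma$ (possible because Morse functions are dense and the boundary condition can be imposed), then interpolate only the height coordinate, $\iota_t = \bigl(\pr_1 \circ \iota_0,\ (\pr_2 \circ \iota_0) + t[f - (\pr_2 \circ \iota_0)]\bigr)$, and conclude that $\iota_t$ is an isotopy once $f$ is $C^\infty$-close to $\pr_2 \circ \iota_0$, by openness of embeddings. The $X$-projection is untouched, and the Morse condition on $\pr_2 \circ \iota_1 = f$ is built in rather than derived. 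Your route perturbs the whole embedding by a normal section and uses parametric jet transversality to obtain Morseness generically; this is workable but carries more machinery and demands more care at degenerate cases (e.g., when $\Sigma$ sits in a slice and every point is critical). The paper's argument buys simplicity by separating the two tasks: density of Morse functions is a statement about $\Sigma$ alone, and the isotopy is an explicit formula. One minor imprecision in your write-up: you say to choose the auxiliary metric so that $\normb_{\iota_0}$ contains $\partial_t$ at critical points, but no choice is needed — at any critical point of $\pr_2 \circ \iota_0$ the tangent plane of $\Sigma$ is horizontal, so $\partial_t$ is automatically orthogonal to it for any Riemannian metric, hence already lies in the normal bundle. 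This is harmless, but flags that your transversality verification is more delicate than it looks. You would also want to confirm explicitly that the perturbation, being supported away from the collar, cannot introduce critical points there (it cannot, since the height on the collar stays linear in the collar coordinate).
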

	\begin{proof}
        Let $f \colon \Sigma \to [0,1]$ be a Morse function on $\Sigma$.
        If $\Sigma$ has boundary, we may choose the Morse function $f$ in such a way that $\restr{f}{\partial \Sigma} = \restr{\left(\pr_2 \circ \iota_0\right)}{\partial\Sigma}$.
	    Consider the homotopy ${\iota_t \colon \Sigma \to X \times [0,1]}$ with
        \begin{align*}
            \iota_t = \big(\left(\mathrm{pr}_1 \circ \iota_0\right), \, \left(\mathrm{pr}_2 \circ \iota_0\right) + t\left[ f - \left(\mathrm{pr}_2 \circ \iota_0 \right) \right] \big).
        \end{align*}
		We may choose $f$ sufficiently close to $(\mathrm{pr}_2 \circ \iota_0)$ such that $\iota_t$ becomes an isotopy.
        This is possible because embeddings are open in the strong topology.
	\end{proof}
    For what follows, we fix a contractible circle $U \subset X$ and a disk $D\subset X \times [0,1]$ that bounds $U$.
    Then $U$ has two possible normal framings.
    One framing extends over $D$, and we denote the corresponding framed circle by $U_0$.
    The other framing does not extend over $D$, and the associated framed circle is denoted by $U_1$.
    \begin{lemma}\label{lem:ker_h_generator}
        The subgroup $\ker(h) \subset \Fr_1(X)$ is generated by $[U_1]$.
    \end{lemma}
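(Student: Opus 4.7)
The plan is to show the two inclusions separately. The containment $\langle [U_1]\rangle \subseteq \ker(h)$ is immediate: $U$ is contractible, so $h([U_1]) = 0$. For the reverse, I take $[L] \in \ker(h)$ with $L = C_1 \sqcup \dots \sqcup C_r$ (the case $L = \emptyset$ is trivial) and aim to express $[L]$ as an integer multiple of $[U_1]$. First, using the identification $H_1(X; o_X) \cong \Or_1(X; o_X)$ from \cref{sec:preliminaries}, I obtain an embedded surface $\Sigma \subset X \times [0,1]$ with $\partial \Sigma = L \subset X \times \{0\}$ and oriented normal bundle $\normb_\Sigma$. After discarding any closed components of $\Sigma$, every remaining component has non-empty boundary, so $\Sigma$ retracts onto a $1$-complex and $H^2(\Sigma; \ZZ_2) = 0$. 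Combined with $w_1(\normb_\Sigma) = 0$ and rank $n \geq 3$, \cref{rmk:vb_trivial_sw_class} then shows that $\normb_\Sigma$ is trivializable, so I fix a positive framing $\varphi$ of $\normb_\Sigma$.

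Next, the restriction $\varphi|_L$ is a positive framing of $\normb_L$ which, by \cref{lem:framings_first_cohomology}, differs from $L$'s given framing by an element $\alpha \in H^1(L; \ZZ_2)$; denote by $B \subseteq \{C_1, \dots, C_r\}$ the set of components on which $\alpha$ is nonzero and by $L^\varphi$ the link $L$ re-framed by $\varphi|_L$. Then $(\Sigma, \varphi)$ is a framed null-bordism of $L^\varphi$, so $[L^\varphi] = 0$ in $\Fr_1(X)$. It therefore suffices to exhibit a framed bordism realizing $[L] + |B| \cdot [U_1] = [L^\varphi]$; combined with $[L^\varphi] = 0$, this would yield $[L] = -|B| \cdot [U_1] \in \langle [U_1]\rangle$.

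I construct this bordism by a pair-of-pants surgery: for each $C_i \in B$ I place a small disjoint copy of $U_1$ beside $C_i$ inside a slice $X \times \{t\}$ and attach a framed $1$-handle between them, producing a saddle cobordism from $C_i \sqcup U_1$ to a single circle. This output circle is isotopic to $C_i$ since $U_1$ bounds a disk, and \cref{prop:unique_framing} determines its framing uniquely from those on $C_i$ and $U_1$. The hard part will be the monodromy computation in a tubular neighborhood of the $1$-handle: I need to verify that, because the framing on $U_1$ represents the non-trivial element of $\pi_1(\SO(n)) = \ZZ_2$, the band-sum flips $C_i$'s framing so that the output agrees with $\varphi|_{C_i}$. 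Performing these surgeries simultaneously for all $i \in B$ then assembles the required framed bordism from $L \sqcup |B| \cdot U_1$ to $L^\varphi$, completing the argument.
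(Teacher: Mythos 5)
Your overall strategy is sound but genuinely different from the paper's, and it has one step that is described rather than carried out. The paper does not globally trivialize $\normb_\Sigma$ and compare to the given framing on $L$; instead it isotopes each component $\Sigma_i$ to be Morse over $[0,1]$, cuts a small disk $D_i$ out of each, observes via \cref{prop:unique_framing} that the framing on $L$ determines a framing on $\sqcup_i \partial D_i$, and notes that each $\partial D_i$ is a contractible framed circle, hence $U_0$ or $U_1$. That route sidesteps the whole question of which framing the surface framing induces on $L$: the only comparison of framings happens on small unknotted circles, where it is trivial. Your route, in contrast, reduces the problem to the identity $[C_i,\varphi_0]+[U_1]=[C_i,\varphi_1]$ for the two framings of a circle $C_i$, which is exactly the step you defer: ``the hard part will be the monodromy computation\dots I need to verify\dots''. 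As written, this is a gap — the lemma is not proved until that claim is.

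The missing step is fillable with the tools already developed, though, and here is the cleanest way: instead of free-form band surgery, take a product annulus $A_0 = C_i\times[0,1]\subset X\times[0,1]$ and remove from it a small disk $D'$ whose boundary sits in a slice, producing a pair of pants $P$ with $\partial P=(C_i\times\{0\})\sqcup\partial D'\sqcup(C_i\times\{1\})$. By \cref{prop:unique_framing}, a framing on the two inner circles extends to $P$ and determines the framing on $C_i\times\{1\}$. If you choose $\varphi_0$ on $C_i\times\{0\}$ and the $U_0$-framing on $\partial D'$, the framing extends across $D'$ to all of $A_0=C_i\times[0,1]$, so the output is $\varphi_0$ again. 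Switching $\partial D'$ to the $U_1$-framing changes the extension by a class $\alpha\in H^1(P;\ZZ_2)$ with $\alpha([C_i\times\{0\}]_2)=0$ and $\alpha([\partial D']_2)=1$; since $[C_i\times\{1\}]_2=[C_i\times\{0\}]_2+[\partial D']_2$ in $H_1(P;\ZZ_2)$, the output framing is twisted, i.e.\ $\varphi_1$. This realizes $[C_i,\varphi_0]+[U_1]=[C_i,\varphi_1]$, and your argument then goes through. Still, the paper's disk-cutting decomposition avoids this bookkeeping entirely, which is why it reads as the more economical proof.
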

    \begin{proof}
        For any element $[L, \framing] \in \ker(h)$, there is a normally oriented bordism $\Sigma \hookrightarrow X \times [0,1]$ from $L$ to the empty set.
        Notice that $\Sigma$ may not be connected, i.e.\ $\Sigma = \Sigma_1 \sqcup \dots \sqcup \Sigma_k$.
        We isotope each connected component $\Sigma_i$ according to \cref{lem:isotope_bordism}.
        Then we can cut off an embedded disk $D_i$ from each $\Sigma_i$ such that $\partial D_i$ sits in one slice of $X \times [0,1]$.
        Each $\partial D_i$ carries a uniquely determined normal framing via \cref{prop:unique_framing}.
        We thus get an induced normal framing of the disjoint union $\partial D_1 \sqcup \dots \sqcup \partial D_k$ which makes it framed bordant to $(L, \framing)$.
        Now, each framed circle $\partial D_i$ is framed bordant to $U_0$ or $U_1$.
        But since $[U_0] = 0$ in $\Fr_1(X)$, the element $[L, \framing]$ is a multiple of $[U_1]$.
    \end{proof}
    Since two copies of $U_1$ are framed bordant to $U_0$ through a pair of pants, the element $[U_1]$ is of order at most two, and we get as a proposition:
    \begin{prop}\label{prop:u1_order_two}
        The forgetful map $h \colon \Fr_1(X) \to H_1(X; o_X)$ is either an isomorphism or a 2-to-1 epimorphism.
    \end{prop}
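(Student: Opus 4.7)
The plan is to combine \cref{lem:ker_h_generator}---which identifies $\ker(h)$ as the cyclic subgroup generated by $[U_1]$---with an explicit pair-of-pants bordism showing $2[U_1]=0$ in $\Fr_1(X)$. Since $h$ is already known to be surjective, this forces $\ker(h)$ to be either trivial or isomorphic to $\ZZ_2$, giving exactly the stated dichotomy between isomorphism and 2-to-1 epimorphism.

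To construct the bordism, I would work inside a small open ball $B \subset X$ containing $U$ and embed a smooth pair of pants $P \subset B \times [0,1]$ whose boundary consists of two parallel copies $U^{(1)}, U^{(2)} \subset B \times \{0\}$ of $U$ together with a third parallel copy $U^{(3)} \subset B \times \{1\}$. Because $B \times [0,1]$ is contractible, the normal bundle $\normb_P$ is orientable of rank $n$. Equipping $U^{(1)}$ and $U^{(2)}$ with their $U_1$ framings, \cref{prop:unique_framing} applied with $k = 2$ extends this choice to a positive framing of $\normb_P$ and determines a unique induced framing on $U^{(3)}$.

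The core step is then to identify that induced framing as the $U_0$ framing. Via \cref{lem:framings_first_cohomology}, this reduces to a linear-algebra computation for the restriction map $\iota^*\colon H^1(P;\ZZ_2) \cong \ZZ_2^2 \to H^1(\partial P;\ZZ_2) \cong \ZZ_2^3$. Since $[U^{(3)}] = [U^{(1)}] + [U^{(2)}]$ in $H_1(P;\ZZ_2)$, Kronecker duality gives $\iota^*(f_1, f_2) = (f_1, f_2, f_1 + f_2)$, whose image is the sum-zero subspace of $\ZZ_2^3$. Hence any framing on $P$ that restricts to $U_1$ on two boundaries must restrict to $U_0$ on the third. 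Combined with $[U_0]=0$, the pair of pants $P$ therefore exhibits the relation $2[U_1] = [U_0] = 0$, and the proposition follows.

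The main obstacle I anticipate is aligning the cohomological labels appearing in $\iota^*$ with the topological $U_0$-versus-$U_1$ dichotomy defined through the bounding disk $D$: one must verify that the $\pi_1(\SO(n))$-class of a framing on $U^{(i)}$, viewed as an element of $H^1(U^{(i)};\ZZ_2)$ via \cref{lem:framings_first_cohomology}, really is the class detecting whether the framing extends across a disk. Once this identification is set up consistently, the remainder of the argument is essentially formal.
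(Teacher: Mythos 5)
Your proposal is correct and matches the paper's own approach: the paper likewise combines \cref{lem:ker_h_generator} with the observation that a pair of pants exhibits $2[U_1]=[U_0]=0$, so $\ker(h)$ is either trivial or $\ZZ_2$ and $h$ (already known to be surjective) is an isomorphism or a 2-to-1 epimorphism accordingly. The alignment issue you flag at the end is genuine but easily handled---choose the reference framing on the pair of pants to be one extending over the three capping disks (which exists since the capped-off sphere sits in a contractible ball and has trivializable normal bundle), so it restricts to $U_0$ on every boundary circle, and your $H^1$ computation then identifies the induced framing on the third circle as $U_0$.
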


    \section{Manifolds of type I}
    \label{sec:typei}

    We have seen that $[U_1]$ generates $\ker(h)$ by \cref{lem:ker_h_generator}.
    This allows us to restate the first part of \cref{thm:main} for type~I manifolds as follows:
    \begin{theorem}
        The manifold $X$ is of type I if and only if $[U_1] = 0$ in $\Fr_1(X)$.
    \end{theorem}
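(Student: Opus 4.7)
The plan is to prove both implications geometrically, relating framings of the boundary of a surface to the Stiefel--Whitney classes of the surface's normal bundle.

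\medskip

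\noindent\textbf{Type~I implies $[U_1] = 0$.} Starting from an embedded closed surface $\iota \colon \Sigma \hookrightarrow X$ with $w_1(\normb_\iota) = 0$ and $w_2(\normb_\iota) \neq 0$ supplied by \cref{prop:geom_typei}, I would push $\Sigma$ into $X \times \{1/2\}$, remove a small embedded disk $D_0 \subset \Sigma$, and bring a collar of $\partial D_0$ down into $X \times \{0\}$ to obtain a compact surface $\Sigma' \hookrightarrow X \times [0,1]$ realising a bordism from $C := \partial D_0 \subset X \times \{0\}$ to the empty set. Since $\Sigma'$ is homotopy equivalent to a $1$-complex, its rank-$n$ normal bundle has vanishing Stiefel--Whitney classes and is therefore trivializable by \cref{rmk:vb_trivial_sw_class}; I pick a positive framing $\Phi$ of $\normb_{\Sigma'}$ and set $\framing := \Phi|_C$, giving $[C, \framing] = 0$ in $\Fr_1(X)$. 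Next I would show that $\framing$ does not extend across the pushed-in copy of $D_0$ in $X \times [0,1]$: otherwise, gluing its extension to $\Phi$ would trivialize $\normb_\Sigma \oplus \RR$, contradicting $w_2(\normb_\Sigma) \neq 0$ via \cref{rmk:vb_trivial_sw_class}. Shrinking $D_0$ so that $C$ sits in a small ball and applying an ambient isotopy taking $(C, D_0)$ to the reference pair $(U, D)$, the framing $\framing$ is carried to the unique framing of $U$ that does not extend over $D$, namely the $U_1$-framing. Hence $[U_1] = [C, \framing] = 0$.

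\medskip

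\noindent\textbf{$[U_1] = 0$ implies type~I.} Conversely, a framed null-bordism $S \hookrightarrow X \times [0,1]$ of $U_1$ together with the reference disk $D$ can be glued along $U$ to yield a closed surface $\bar S := S \cup D$ in $X \times [0,1]$; because $n \geq 3$, a relative transversality argument perturbs $S$ so that $S \cap D = U$, and $\bar S$ is embedded. I claim $\normb_{\bar S}$ is not trivializable. Indeed, a global positive framing would restrict, via \cref{cor:unique_framing_one_boundary} applied separately to $S$ and to $D$, to the unique framings of $U$ that extend across $S$ and across $D$; but by construction these are the distinct $U_1$- and $U_0$-framings, a contradiction. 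Since $\normb_{\bar S}$ is orientable (both $\normb_S$ and $\normb_D$ are) and has rank $n \geq 3$, \cref{rmk:vb_trivial_sw_class} forces $w_2(\normb_{\bar S}) \neq 0$. Finally, I would project $\bar S$ into $X$ and perturb by general position to an immersion $\bar \iota \colon \bar S \looparrowright X$; stably $\normb_{\bar S} \cong \normb_{\bar \iota} \oplus \RR$, so $w_1(\normb_{\bar \iota}) = 0$ and $w_2(\normb_{\bar \iota}) \neq 0$, and \cref{prop:geom_typei} lets me conclude that $X$ is of type~I.

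\medskip

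\noindent\textbf{Main obstacle.} The subtlest point is the identification of the framed circle $(C, \framing)$ with $U_1$ in the first direction: although triviality considerations on normal bundles identify $\framing$ as the framing which fails to extend across the bounding disk, converting this into the equality $[C, \framing] = [U_1]$ in $\Fr_1(X)$ requires an ambient isotopy to the reference pair $(U,D)$ together with the observation that any two embedded disks in the $(n+2)$-manifold $X \times [0,1]$ with $n \geq 3$ are ambient isotopic. The remaining technicalities --- cutting off disks so that their boundaries sit in slices, arranging $S \cap D = U$, and promoting the projection $\bar S \to X$ to an immersion --- are all standard general-position arguments enabled by $n \geq 3$.
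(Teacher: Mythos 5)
Your proposal is correct and follows essentially the same strategy as the paper: cut a small disk from the given surface to exhibit $[U_1]$ as a null-bordant class using \cref{cor:unique_framing_one_boundary} and \cref{rmk:vb_trivial_sw_class}, and conversely glue the null-bordism of $U_1$ to the reference disk $D$, read off non-triviality of the normal bundle from the mismatch of framings over $U$, and project to an immersion in $X$ to invoke \cref{prop:geom_typei}. The only cosmetic differences are that you make the paper's implicit ``without loss of generality this disk is $D$'' explicit as an ambient isotopy of the small pair $(C, D_0)$ onto $(U, D)$, and in the converse direction you keep $S$ and $D$ on the same side of $X \times \{0\}$ and separate them by general position rather than reflecting $D$ into $X \times [-1, 0]$ as the paper does.
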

    \begin{figure}[t]
        \centering
        \begin{overpic}[width=.6\textwidth]{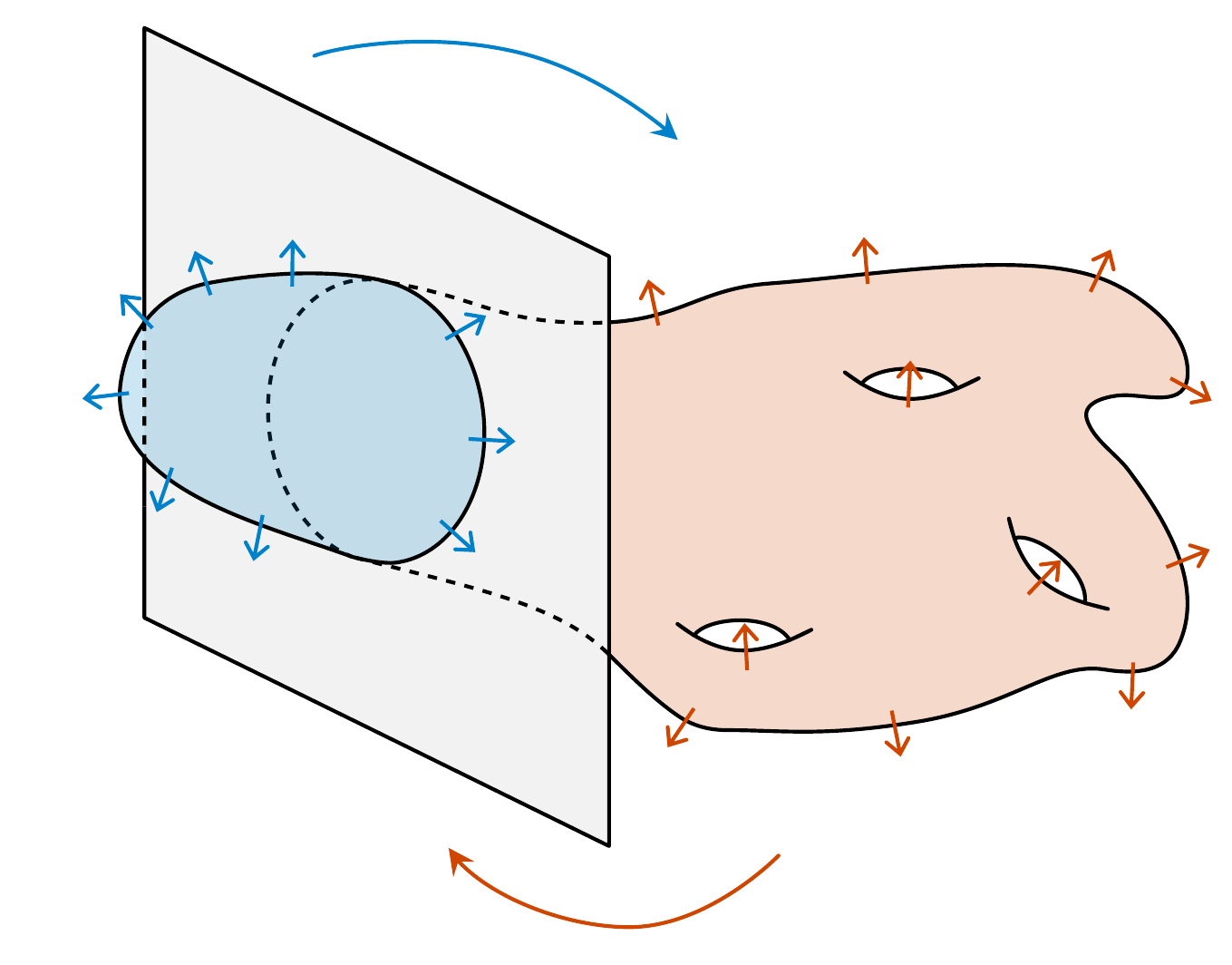}
            \put (5,37) {\color{myblue}\small $D$}
            \put (94,54) {\color{myorange}\small $\Sigma_0$}
            \put (20,13) {\small $X \times \{ 0 \}$}
            \put (60,2) {\small\color{myorange} does \textit{not} extend over $D$}
            \put (50,74) {\small\color{myblue} does \textit{not} extend over $\Sigma_0$}
        \end{overpic}
        \caption{The surface $\Sigma = D \cup \Sigma_0$ in $X \times [0,1]$ where $\nu_\Sigma$ is oriented but not trivializable with many suppressed directions.}
        \label{fig:non_triv_surface}
    \end{figure}
    \begin{proof}
        Suppose $X$ is of type~I.
        According to \cref{prop:geom_typei}, there is a closed embedded surface $\iota\colon\Sigma \hookrightarrow X$ whose normal bundle is oriented and has non-vanishing second Stiefel-Whitney class.
        Denote by $\iota_0 \colon \Sigma \hookrightarrow X \times [0,1]$ the map $s \mapsto (\iota(s), 0)$.
        We want to show that $[U_1] = 0$ in $\Fr_1(X)$.
        First, we isotope $\iota_0$ to an embedding $\iota_1 \colon \Sigma \hookrightarrow X \times [0,1]$ for which $\iota_1 \circ \pr_2$ is Morse using \cref{lem:isotope_bordism}.
        Notice that this gives an isomorphism of normal bundles $\normb_{\iota_1} \cong \normb_{\iota} \oplus \varepsilon^1$.
        Then, we can cut off a disk from $\Sigma$ whose boundary embeds into one slice of $X \times [0,1]$.
        Without loss of generality, we assume that this disk is given by the fixed disk $D$.
        That is $\partial D = U$ and the framing of $U_0$ extends over $D$.
        Because $\Sigma$ has a non-trivial normal bundle, the framing of $U_0$ cannot be extended over $\Sigma_0 = \Sigma - \mathrm{int}(D)$.
        The situation is depicted in \cref{fig:non_triv_surface}.
        Then, according to \cref{cor:unique_framing_one_boundary}, the framing of $U_1$ can be extended over $\Sigma - \mathrm{int}(D)$.
        This implies $[U_1] = 0$.

        For the converse assume that $[U_1]=0$ in $\Fr_1(X)$.
        We want to show that there is an immersed closed surface in $X$ with an oriented normal bundle whose second Stiefel-Whitney class does not vanish.
        By definition, the framing of $U_0$ extends over the given disk $D \subset X \times [0,1]$.
        Since $[U_1] = 0$, the normally framed circle $U_1$ can be extended over some surface $\Sigma_0 \subset X \times [0,1]$ with boundary $U$.
        We obtain a new closed surface $\iota_1 \colon \Sigma \hookrightarrow X \times [-1,1]$ by gluing $D$ and $\Sigma_0$ together along $U \subset X \times \{0\}$ from opposite sides.
        Visualization is provided in \cref{fig:non_triv_surface}.
        This surface has an oriented and non-trivial normal bundle of rank $n \geq 3$, and because of \cref{rmk:vb_trivial_sw_class}, the second Stiefel Whitney class $w_2(\normb_{\iota_1})$ must not vanish.
        We project the surface to $X \times \{0\}$ and homotope it to an immersed surface $\iota \colon \Sigma \looparrowright X$.
        Denote by $\iota_0 \colon \Sigma \looparrowright X \times [-1,1]$ the map $s \mapsto (\iota(s), 0)$. 
        Since $n \geq 3$, the maps $\iota_0$ and $\iota_1$ are homotopic through immersions.
        Again, we obtain $\normb_{\iota_1} \cong \normb_{\iota} \oplus \varepsilon^1$ and thus $w_2(\normb_{\iota}) = w_2(\normb_{\iota_1})$.
        We conclude that $\normb_{\iota}$ is oriented and $w_2(\normb_{\iota})$ does not vanish, and hence by \cref{prop:geom_typei}, $X$ is of type~I.
    \end{proof}

    \begin{example}\label{ex:real_projective_4k}
        Consider $X = \RP^{4k}$ for $k \geq 1$.
        Let $\RP^2 \subset \RP^{4k}$ be the standard embedding.
        A~straightforward computation with Stiefel-Whitney classes shows that $w_1(\normb_{\RP^2})$ vanishes but $w_2(\normb_{\RP^2})$ does not.%
        \footnote{
            Recall that $H^*(\RP^{n+1}; \ZZ_2) \cong \sfrac{\ZZ_2[a]}{\langle a^{n+2} \rangle}$ and $w(\RP^{n+1}) = (1+a)^{n+2}$.
        }
        Since $H_1(\RP^{4k}; o_X) \cong H^{4k - 1}(\RP^{4k}; \ZZ) = 0$, we deduce $\Fr_1(\RP^{4k}) = 0$.
    \end{example}

    \section{Manifolds of type II}
    \label{sec:typeii}
    
    Throughout this section, we assume that $X$ is of type II.
    In that case, the subgroup $\ker(h)$ is isomorphic to $\ZZ_2$.
    Hence we obtain the short exact sequence~\eqref{eq:ses:typeii} stated in the introduction, and it remains to determine the extension.

    We discuss some required background in homological algebra.
    Consider the following short exact sequence of (local) coefficients:
    \[
        \begin{tikzcd}[sep=small]
            0 \arrow[r] & o_X \arrow[r, "{\cdot \, 2}"] &[4ex] o_X \arrow[r, "{\smod 2}"] &[4ex] X \times \ZZ_2 \arrow[r] & 0 \mathrlap{.}
        \end{tikzcd}
    \]
    This gives rise to connecting homomorphisms $\beta_k\colon H_{k+1}(X;\ZZ_2) \to H_{k}(X; o_X)$ which we call \emph{twisted Bockstein homomorphisms}.
    In cohomology, we have Bockstein homomorphisms $\beta^k \colon H^{k}(X;\ZZ_2) \to H^{k+1}(X;\ZZ)$ associated with the sequence $\ZZ \to \ZZ \to \ZZ_2$.
    In a similar way to the ordinary Bockstein homomorphisms, the following diagram commutes:
    \begin{equation}\label{eq:bockstein_poincare}
        \begin{tikzcd}[sep=large]
            H^{n-k}(X;\ZZ_2)  \arrow[r, "\beta^{n-k}"] \arrow[d, "\mathrm{PD}_2"', "\cong"]    & H^{n+1-k}(X;\ZZ) \arrow[d, "\mathrm{PD}_\mathrm{tw}"', "\cong"] \arrow[r, "\smod 2"] & H^{n+1-k}(X; \ZZ_2) \arrow[d, "\mathrm{PD}_2"', "\cong"] \\
            H_{k+1}(X;\ZZ_2)  \arrow[r, "\beta_k"']                                             & H_{k}(X; o_X) \arrow[r, "\smod 2"']                                                & H_{k}(X; \ZZ_2) \mathrlap{.}
        \end{tikzcd}    
    \end{equation}
    We appeal to Munkres \cite[Lemma~69.2]{bib:munkres:elem_alg_top} for proof of the commutativity of the left square in the oriented non-twisted case.
    \begin{remark}\label{rmk:bockstein_poincare_dual}
        Note that the top row of \eqref{eq:bockstein_poincare} is precisely given by the first Steenrod square~$\sq^1$.
        Let $\iota\colon\Sigma \hookrightarrow X$ be an embedded (or immersed) surface in $X$.
        Then $\beta_1$ sends the fundamental class $\iota_*[\Sigma]_2$ to a class in $H_1(X; o_X)$ whose $\smod 2$ reduction is the Poincar\'e dual of the shriek $\iota_! w_1(\normb_\iota)$.%
        \footnote{
            The original result goes back to Thom~\cite{bib:thom:steenrod_normal} which is further generalized by Eccles and Grant~\cite{bib:eccl_grant:steenrod}.
        }
    \end{remark}

    Next, we would like to classify the extension given in~\eqref{eq:ses:typeii} and give it a geometric interpretation.
    What follows revisits the discussion in~\cite[pp.\:6--7]{bib:kir_mel_tei:coh_4manifolds} applied to the twisted case.
    Consider the free resolution
    \begin{equation}\label{eq:free_res}
        \begin{tikzcd}[sep=small]
            0 \arrow[r] & B_1 \arrow[r, "i"] & Z_1 \arrow[r] & H_1(X; o_X) \arrow[r] & 0,
        \end{tikzcd}
    \end{equation}
    where $i \colon B_1 \to Z_1$ is the inclusion of 1-boundaries into 1-cycles in the singular chain complex of the local coefficients $o_X$.
    Using this free resolution, we can identify $\Ext(H_1(X; o_X), \ZZ_2)$ with $\coker(i^*)$.
    That is, an element of $\Ext(H_1(X; o_X), \ZZ_2)$ is represented by a homomorphism $\phi \colon B_1 \to \ZZ_2$.
    Denote by $\Tor_2(H_1(X; o_X))$ the subgroup of $H_1(X; o_X)$ with all elements of order at most two.
    Then we get a homomorphism
    \begin{equation}\label{eq:ext_tor2_iso}
        \begin{split}
            \Psi \colon \Ext(H_1(X; o_X), \ZZ_2) & \to \Tor_2(H_1(X; o_X))^*, \\
            [\phi] &\mapsto \left( [z] \mapsto \phi(2z) \right),
        \end{split}
    \end{equation}
    where we denoted $\Hom(-, \ZZ_2) = -^*$.
    Notice that this map is well-defined because
    \begin{enumerate}[i)]
        \item if $[z] \in \Tor_2(H_1(X; o_X))$, we have $2z \in B_1$,
        \item for any $b \in B_1$, we have $\phi(2(z + b)) = \phi(2z)$, and
        \item if $\phi$ extends to a map $Z_1 \to \ZZ_2$, then $\phi(2z) = 0$.
    \end{enumerate}
    The next proposition is an application of~\cite[Theorem~26.5]{bib:eil_mac:group_extensions} spelled out for our case.
    \begin{prop}
        The map $\Psi \colon \Ext(H_1(X; o_X), \ZZ_2) \to \Tor_2(H_1(X; o_X))^*$ is an isomorphism.
    \end{prop}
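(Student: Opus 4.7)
My plan is to verify injectivity and surjectivity of $\Psi$ separately, exploiting the fact that $Z_1/2Z_1$ and $B_1/2B_1$ are $\ZZ_2$-vector spaces and that $\ZZ_2$ is injective as a module over itself, so any linear functional on a subspace extends to the whole space. Well-definedness of $\Psi$ has already been checked, so both directions reduce to diagram chases with the resolution~\eqref{eq:free_res}.

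For injectivity, let $\phi \colon B_1 \to \ZZ_2$ represent a class with $\Psi[\phi]=0$. Every element of $B_1 \cap 2Z_1$ has the form $2z$ for some $z \in Z_1$ with $[z] \in \Tor_2(H_1(X; o_X))$, so the hypothesis forces $\phi$ to vanish on $B_1 \cap 2Z_1$. Consequently $\phi$ factors through the $\ZZ_2$-subspace $B_1/(B_1 \cap 2Z_1) \cong (B_1 + 2Z_1)/2Z_1 \subset Z_1/2Z_1$; the resulting functional extends by injectivity to $Z_1/2Z_1 \to \ZZ_2$ and lifts along $Z_1 \twoheadrightarrow Z_1/2Z_1$ to an extension of $\phi$ to $Z_1$, showing $[\phi] = 0$ in $\coker(i^*) \cong \Ext(H_1(X;o_X), \ZZ_2)$.

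For surjectivity, take $\psi \in \Tor_2(H_1(X;o_X))^*$. Since the chain group $C_1(X; o_X)$ is free abelian, its subgroup $Z_1$ is torsion-free, so the rule $\phi_0(2z) := \psi([z])$ unambiguously defines a homomorphism $\phi_0 \colon B_1 \cap 2Z_1 \to \ZZ_2$, which vanishes on $2B_1$ because $[b]=0$ in $H_1(X;o_X)$ for $b \in B_1$. Hence $\phi_0$ descends to the $\ZZ_2$-subspace $(B_1 \cap 2Z_1)/2B_1 \subset B_1/2B_1$; extending and lifting along $B_1 \twoheadrightarrow B_1/2B_1$ produces $\phi \colon B_1 \to \ZZ_2$ with $\Psi[\phi] = \psi$ by construction. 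The main nuisance is the bookkeeping involved in identifying the nested subgroups $2B_1 \subset B_1 \cap 2Z_1 \subset B_1$ and $2Z_1 \subset B_1 + 2Z_1 \subset Z_1$ and realising them as subspaces of the right $\ZZ_2$-vector space; once this is done the heavy lifting is just injectivity of $\ZZ_2$ over $\ZZ_2$, which is precisely what \cite[Theorem~26.5]{bib:eil_mac:group_extensions} encapsulates in our situation.
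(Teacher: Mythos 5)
Your proof is correct and takes a genuinely different route from the paper. The paper invokes compactness of $X$ to know that $H_1(X;o_X)$ is a direct sum of cyclic groups and then appeals to the Cohen--Gluck stacked basis theorem to choose compatible bases $\{z_i\}$ of $Z_1$ and $\{n_iz_i\}$ of $B_1$; the injectivity and surjectivity arguments then become explicit bookkeeping with the parities of the $n_i$. You instead avoid any choice of basis by observing that $Z_1/2Z_1$ and $B_1/2B_1$ are $\ZZ_2$-vector spaces, that $B_1\cap 2Z_1$ is exactly the set of elements $2z$ with $[z]\in\Tor_2(H_1(X;o_X))$, and that the extension problems in both directions (extending a functional killed on $B_1\cap 2Z_1$ across $Z_1$ for injectivity, and extending $\phi_0$ from $(B_1\cap 2Z_1)/2B_1$ across $B_1/2B_1$ for surjectivity) are solved by the fact that every subspace of a $\ZZ_2$-vector space is a direct summand. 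The well-definedness of $\phi_0(2z):=\psi([z])$ is correctly justified by torsion-freeness of $Z_1\subset C_1(X;o_X)$. Your version is basis-free, does not use compactness of $X$, and makes the role of the injectivity of $\ZZ_2$ transparent, whereas the paper's version is more concrete and hands-on; both are valid unwindings of \cite[Theorem~26.5]{bib:eil_mac:group_extensions} for this resolution.
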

    \begin{proof}
        Since $X$ is compact, $H_1(X; o_X)$ is a direct sum of cyclic groups.
        According to Cohen and Gluck~\cite{bib:coh_glu:stacked_bases}, there is a stacked basis for the sequence~\eqref{eq:free_res}, meaning there is a basis $\{z_i\}_{i \in I}$ of $Z_1$ so that $\{ n_i z_i \}_{i \in J}$ is a basis of $B_1$ for $J \subseteq I$ and some $n_i \in \ZZ$.
        We split $B_1 = B_1^\mathrm{even} \oplus B_1^\mathrm{odd}$ generated by the $n_i z_i$ with $n_i$ even or odd respectively.
        For injectivity, assume that $\Psi([\phi]) = 0$.
        This means that ${\phi}|_{B_1^\mathrm{even}} \equiv 0$.
        In that case, $\phi$ can be extended to a map $Z_1 \to \ZZ_2$, and therefore $[\phi]=0$ in $\coker(i^*)$ as desired.
        For surjectivity, fix a homomorphism $\varepsilon \colon \Tor_2(H_1(X; o_X)) \to \ZZ_2$.
        We define $\phi_\varepsilon \colon B_1 \to \ZZ_2$ as follows.
        On $B_1^\mathrm{even}$, we declare $\phi_\varepsilon(n_i z_i) = \varepsilon([\frac{n_i}{2} z_i])$, and otherwise we set ${\phi_\varepsilon}|_{B_1^\mathrm{odd}} \equiv 0$.
        It is now easy to verify that $\Psi([\phi_\varepsilon]) = \varepsilon$.
    \end{proof}
    \begin{lemma}\label{lem:bockstein_dual_uct}
        The Bockstein $\beta_1$ is dual to $\delta$ in the twisted universal coefficient sequence~\eqref{eq:twisted_uct}, meaning that the following diagram commutes:
        \begin{equation*}
            \begin{tikzcd}
                \Ext(H_1(X; o_X), \ZZ_2) \arrow[r, "\delta"] \arrow[d, "\cong"'] & H^2(X; \ZZ_2) \arrow[d, "\cong"] \\
                \Tor_2(H_1(X; o_X))^* \arrow[r, "\beta_1^*"] & H_2(X; \ZZ_2)^* \mathrlap{.}
            \end{tikzcd}
        \end{equation*}
    \end{lemma}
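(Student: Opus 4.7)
The plan is to chase both paths in the diagram at the level of chains, using that the connecting map $\delta$ in the universal coefficient theorem and the Bockstein $\beta_1$ admit compatible chain-level descriptions. The key preliminary observation is that since the local system $o_X$ is free abelian of rank one, mod~$2$ reduction induces a chain isomorphism $C_*(X; o_X) \otimes \ZZ_2 \cong C_*(X; \ZZ_2)$, so that $\Hom(C_*(X; o_X), \ZZ_2) \cong \Hom(C_*(X; \ZZ_2), \ZZ_2)$, and hence both the UCT map $H^2(X; \ZZ_2) \to \Hom(H_2(X; o_X), \ZZ_2)$ (from \cref{thm:twisted_uct}) and the usual evaluation $H^2(X; \ZZ_2) \to \Hom(H_2(X; \ZZ_2), \ZZ_2)$ come from the same pairing. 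Since $\ZZ_2$ is a field, the ordinary evaluation map is an isomorphism, justifying that the right-hand arrow is well-defined.

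Next, I would recall the standard construction of $\delta$ from the free resolution \eqref{eq:free_res}. Given a representative $\phi \colon B_1 \to \ZZ_2$ of a class $[\phi] \in \Ext(H_1(X;o_X), \ZZ_2)$, the cochain $\phi \circ \partial \colon C_2(X; o_X) \to \ZZ_2$ is automatically a cocycle, and $\delta[\phi] = [\phi \circ \partial] \in H^2(X; \ZZ_2)$ under the identification above. For the other map, I would give a chain-level recipe for the Bockstein: given $[c] \in H_2(X; \ZZ_2)$ represented by $c \in Z_2(X; \ZZ_2)$, lift $c$ to an integral chain $\tilde c \in C_2(X; o_X)$; because $\partial \tilde c$ reduces to $\partial c = 0$ modulo~$2$ and $C_1(X; o_X)$ is free abelian, we may write $\partial \tilde c = 2 \tilde z$ for a unique $\tilde z \in C_1(X; o_X)$, which is a cycle because $C_0(X; o_X)$ is torsion-free. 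Then by construction $\beta_1[c] = [\tilde z]$, and $[\tilde z]$ lies in $\Tor_2(H_1(X; o_X))$ as expected.

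Putting these together, evaluating $\delta[\phi]$ on $[c]$ gives $(\phi \circ \partial)(\tilde c) = \phi(2 \tilde z)$, while travelling the other way around the square gives $\beta_1^{*} \Psi([\phi])([c]) = \Psi([\phi])([\tilde z]) = \phi(2 \tilde z)$ directly from the definition of $\Psi$ in \eqref{eq:ext_tor2_iso}. The two agree, proving commutativity.

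The only genuine subtlety is ensuring that the chain-level formula for $\delta$ is indeed the one induced by the free resolution \eqref{eq:free_res}, not just a formally similar boundary-composition; this follows from the standard diagram chase used in the proof of the UCT, once \eqref{eq:free_res} is embedded into a chain map between the resolution $0 \to B_1 \to Z_1 \to H_1(X;o_X) \to 0$ and the singular chain complex $C_*(X; o_X)$, which I would spell out only briefly since it mirrors the classical argument reviewed in the proof of \cref{thm:twisted_uct}.
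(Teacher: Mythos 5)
Your proposal is correct and follows essentially the same path as the paper's proof: describe $\delta$ at the (co)chain level via the free resolution~\eqref{eq:free_res} as $[\phi]\mapsto[\phi\circ\partial]$, describe $\beta_1$ via the lift-then-halve-the-boundary recipe, and observe that evaluating both against a cycle $c$ yields $\phi(2\tilde z)$, matching the definition of $\Psi$ in~\eqref{eq:ext_tor2_iso}. Your extra remarks (that the right-hand vertical arrow is an isomorphism because $\ZZ_2$ is a field, and that the cochain-level $\delta$ agrees with the UCT connecting map) make explicit two points the paper leaves implicit, but do not change the argument.
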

    \begin{proof}
        Under the identification $\Ext(H_1(X; o_X), \ZZ_2) \cong \coker(i^*)$, the map $$\delta\colon\Ext(H_1(X; o_X), \ZZ_2) \to H^2(X; \ZZ_2)$$ can be described as follows.
        Take a representative $\phi \colon B_1 \to \ZZ_2$ and map it to $$\phi \circ \partial \colon C_2(X; o_X) \to \ZZ_2.$$
        This is a cocycle in the cochain complex $\Hom(C_*(X; o_X), \ZZ_2)$.
        But we have a canonical chain isomorphism $\Hom(C_*(X; o_X), \ZZ_2) \cong C^*(X; \ZZ_2)$.
        Under this identification, $\phi \circ \partial$ is mapped to a cocycle representing the $\ZZ_2$-cohomology class $\delta([\phi])$.
        For the other composition, recall the construction of the Bockstein boundary homomorphism.
        Let $c \in C_2(X; \ZZ_2)$ be a cycle and $\bar{c} \in C_2(X; o_X)$ a lift of $c$, i.e.\ $\bar{c} \:\smod{2} = c$.
        Then the Bockstein boundary map is given by $\beta_1([c]) = [\frac{1}{2}\partial \bar{c}]$.
        Precomposing $\beta_1^*$ with the isomorphism in \eqref{eq:ext_tor2_iso} gives precisely $[\phi \circ \partial]$, and hence the diagram commutes.
    \end{proof}
    The discussion thus far has primarily focused on homological algebra.
    The following lemma provides a more geometric perspective.
    \begin{lemma}\label{lem:char_func_geom}
        Let $\varepsilon_X \colon \Tor_2(H_1(X; o_X)) \to \ZZ_2$ be the homomorphism associated with the extension in~\eqref{eq:ses:typeii}.
        If $[L, \framing]$ is in the preimage of $\Tor_2(H_1(X; o_X))$ under the map $h \colon \Fr_1(X) \to H_1(X; o_X)$, then
        \begin{align}\label{eq:torsion_circle_geom}
            (\varepsilon_X \circ h)([L, \framing]) = 
            \begin{cases}
                0 & \text{iff} \quad 2\,[L, \framing] = [U_0], \\
                1 & \text{iff} \quad 2\,[L, \framing] = [U_1].
            \end{cases}
        \end{align}
    \end{lemma}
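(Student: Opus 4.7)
The plan is to recognize $\varepsilon_X$ as the standard \emph{doubling homomorphism} attached to the extension \eqref{eq:ses:typeii} with kernel $\ZZ_2$, and to read off its values via the preceding identification of $\Ext$ with $\Tor_2(-)^*$. Concretely, I would introduce
\[
    \eta \colon \Tor_2(H_1(X; o_X)) \to \ZZ_2, \qquad \eta(\tau) := 2[L, \framing],
\]
where $[L, \framing] \in \Fr_1(X)$ is any preimage of $\tau$ under $h$ and $\ker(h) \cong \ZZ_2$ is identified via $[U_0] \leftrightarrow 0$, $[U_1] \leftrightarrow 1$ (using \cref{lem:ker_h_generator} and \cref{prop:u1_order_two}). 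The lemma is then equivalent to the identity $\eta = \varepsilon_X$, so the task splits into verifying that $\eta$ is a well-defined homomorphism and matching it with $\varepsilon_X$.

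Well-definedness is immediate: any two preimages of $\tau$ differ by an element of $\ker(h) \cong \ZZ_2$, which is annihilated by doubling since $2[U_1] = [U_0] = 0$. Additivity follows because if $[L_i, \framing_i]$ lifts $\tau_i$, then $[L_1, \framing_1] + [L_2, \framing_2]$ lifts $\tau_1 + \tau_2$ and doubling distributes over the group operation of $\Fr_1(X)$.

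The substantive step is matching $\eta$ with $\varepsilon_X$, which I would do by unwinding the classical correspondence between group extensions and $\Ext$ through the presentation $\Ext(H_1(X;o_X),\ZZ_2) \cong \coker(i^*)$ from \eqref{eq:free_res}. Since $Z_1$ is free abelian, I can pick a \emph{homomorphism} lift $\sigma \colon Z_1 \to \Fr_1(X)$ of the quotient $Z_1 \twoheadrightarrow H_1(X; o_X)$ through $h$. Its restriction to $B_1$ lands in $\ker(h)$ and yields a homomorphism $\phi \colon B_1 \to \ZZ_2$ whose class $[\phi] \in \coker(i^*)$ is, by construction, the classifying element of the extension. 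For a $2$-torsion class $[z]$ with cycle representative $z \in Z_1$, the element $\sigma(z) \in \Fr_1(X)$ is a preimage of $[z]$ under $h$, so by the formula for $\Psi$ from \eqref{eq:ext_tor2_iso},
\[
    \Psi([\phi])([z]) \;=\; \phi(2z) \;=\; 2\sigma(z) \;=\; \eta([z]).
\]
Since $\varepsilon_X = \Psi([\phi])$ by definition, this gives $\eta = \varepsilon_X$.

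The main obstacle is the bookkeeping in this last step: one must track the extension class through the cokernel-of-$i^*$ presentation of $\Ext$ and confirm that the formula $\phi(2z)$ defining $\Psi$ agrees with the geometric doubling operation on preimages in $\Fr_1(X)$. Once this alignment is in hand, the case distinction \eqref{eq:torsion_circle_geom} is just the explicit value of $\eta$ under the identification $\ker(h) = \{[U_0], [U_1]\}$.
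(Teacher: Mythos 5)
Your proof is correct and follows essentially the same route as the paper's: both arguments read off $\varepsilon_X = \Psi([\phi])$ by lifting the quotient $Z_1 \twoheadrightarrow H_1(X;o_X)$ through $h$ to a homomorphism $Z_1 \to \Fr_1(X)$ (the paper calls it $\psi$, you call it $\sigma$), restricting to $B_1$ to obtain the classifying $\phi$, and then evaluating $\phi(2z) = 2\sigma(z)$ for a $2$-torsion representative. You are somewhat more explicit than the paper in verifying that the doubling assignment is well defined on preimages (the paper quietly uses that $\psi(z)$ and $[L,\framing]$ differ by an element of $\ker(h)$ killed by doubling), but this is a presentational difference, not a different argument.
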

    \begin{proof}
        Using the free resolution~\eqref{eq:free_res}, the extension of~\eqref{eq:ses:typeii} is represented by a map $\phi\colon B_1 \to \ZZ_2$ that is induced by $\psi \colon Z_1 \to \Fr_1(X)$ in the following commutative diagram:
        \[
            \begin{tikzcd}
                0 \arrow[r] & B_1 \arrow[r, "i"] \arrow[d, "\phi"] & Z_1 \arrow[r] \arrow[d, "\psi"] & H_1(X; o_X) \arrow[r] \arrow[d, equal] & 0 \\
                0 \arrow[r] & \ZZ_2 \arrow[r, "u"]                 & \Fr_1(X) \arrow[r, "h"]         & H_1(X; o_X) \arrow[r]                  & 0 \mathrlap{.}
            \end{tikzcd}
        \]
        For a reference see Spanier~\cite[§5.5.2]{bib:spanier:alg_top}.
        Notice that the equivalence class ${[\phi] \in \coker(i^*)}$ does not depend on the choice of $\psi$.
        Let $[L, \framing]$ be an element in the preimage of $\Tor_2(H_1(X; o_X))$ under $h$ and ${[z] = h([L, \framing])}$.
        Then $2\,[L, \framing] \in \ker(h)$, that is $2\,[L, \framing]$ is either $[U_0]$ or $[U_1]$.
        Because the diagram commutes, we obtain $(u \circ \phi)(2z) = 2\,[L, \framing]$.
        But by definition, we have $(\varepsilon_X \circ h)([L, \framing]) = \phi(2z)$, and hence~\eqref{eq:torsion_circle_geom} follows.
    \end{proof}
    \begin{remark}\label{rmk:independent_framing}
        The relation established in~\eqref{eq:torsion_circle_geom} remains invariant under any choice of initial framing~$\framing$, as the doubling of this choice has no contribution to the resulting framing, as indicated by \cref{prop:unique_framing}.
    \end{remark}
    \begin{remark}
        By \cref{prop:u1_order_two} and \cref{lem:char_func_geom}, it follows that any non-zero element $[L, \framing]$ in the preimage of $\Tor_2(H_1(X; o_X))$ under $h$ has either order two or four in $\Fr_1(X)$ depending on whether $(\varepsilon_X \circ h)([L, \framing])$ is 0 or 1.
    \end{remark}
    With our geometric perspective on the group extension, we are almost ready to approach the proof of the main theorem.
    However, before doing so, we require additional geometric tools.
	\begin{prop}\label{prop:framing_inward_stretch}
        Suppose $M$ is a closed $(m+1)$-dimensional manifold with $m \geq 1$.
        Let $\iota \colon N \hookrightarrow M$ be an embedded compact manifold of dimension $k+1$ with boundary~$\partial N$.
        Let $\tau$ be a normal framing of $\iota \colon N \hookrightarrow M$.
        Suppose $V$ is an outward-pointing vector field in $\restr{TN}{\partial N}$.
        Then we have $\left[\iota(\partial N) , \left(\restr{\tau}{\partial N}, \mathrm{d}\iota(V) \right)\right] = 0$ in $\Fr_k(M)$.
	\end{prop}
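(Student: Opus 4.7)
The plan is to exhibit $N$ itself, pushed slightly off the bottom of the slab, as an explicit framed null-bordism of $(\iota(\partial N),(\restr{\tau}{\partial N},d\iota(V)))$ in $M\times[0,1]$.

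Concretely, I would fix a collar $c\colon\partial N\times[0,\varepsilon)\hookrightarrow N$ with $\partial_s c(y,0)=-V(y)$, together with a smooth function $h\colon N\to[0,1)$ satisfying $h|_{\partial N}=0$, $h$ constant outside the collar, and $h(c(y,s))=\sigma(s)$ near $\partial N$ with $\sigma(0)=0$ and $\sigma'(0)>0$. Then
\[
    \tilde\iota\colon N\hookrightarrow M\times[0,1],\qquad x\longmapsto(\iota(x),h(x)),
\]
is an embedding whose image is a compact $(k+1)$-submanifold of $M\times[0,1]$ meeting $\partial(M\times[0,1])$ transversally, exactly in $\iota(\partial N)\subset M\times\{0\}$.

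To frame this bordism, observe that each section $(\tau_i,0)$ of $T(M\times[0,1])|_{\tilde\iota(N)}$ lies in $\normb_{\tilde\iota}$ because $\tau_i\perp d\iota(TN)$, and together they span a trivial rank $m-k$ subbundle. Since $\normb_\iota$ is trivial via $\tau$, the bundle $\normb_{\tilde\iota}$ is stably trivial, so the orthogonal complement of this subbundle is a line bundle with vanishing $w_1$, hence trivial. I would choose a nowhere vanishing section $\eta$ with strictly positive $\partial_t$-component, e.g.\ the orthogonal projection of $\partial_t$ onto $\normb_{\tilde\iota}$.

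Finally, I would identify the induced framing on $\iota(\partial N)$ through the bordism convention
\[
    \normb_{\tilde\iota}|_{\partial N}\hookrightarrow\normb_{\partial N}^{M\times[0,1]}=\normb_{\partial N}^{M}\oplus\langle\partial_t\rangle\twoheadrightarrow\normb_{\partial N}^{M},
\]
which is an isomorphism by transversality of $\tilde\iota(N)$ with $M\times\{0\}$. The $(\tau_i,0)$ project to $\tau_i$, and a short computation at a boundary point inside the plane $\langle d\iota(V),\partial_t\rangle$ shows that $\eta$ projects to a strictly positive multiple of $d\iota(V)$; the induced framing is therefore homotopic to $(\restr{\tau}{\partial N},d\iota(V))$, yielding the null-bordism.

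The hardest part is the final sign bookkeeping: the orientation of the collar, the meaning of \emph{outward-pointing} for $V$, and the direction conventions for the bordism identification all have to be tracked so that $\eta$ projects to a \emph{positive} rather than negative multiple of $d\iota(V)$, as the wrong sign would produce a different (and generally distinct) framed bordism class.
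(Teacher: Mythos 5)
Your proposal is correct and uses the same core idea as the paper: push $N$ into the slab $M\times[0,1]$ along a height function vanishing exactly on $\partial N$, then frame the resulting embedding by stabilizing $\tau$ and supplying one extra normal vector. The only real difference is in that last vector. The paper takes the explicit convex interpolation
\[
    V^{(\jmath)}(p) = \bigl((1-2f(p))\,\mathrm{d}\iota(V(p)),\; 2f(p)\,\partial_t\bigr),
\]
whose boundary restriction is $d\iota(V)$ on the nose, so no computation is needed at $\partial N$; the cost is the small interior verification that $V^{(\jmath)}$ is never tangent to $\jmath(N)$. You instead take $\eta = P(\partial_t)$, the orthogonal projection of $\partial_t$ onto $\normb_{\tilde\iota}$; non-vanishing and transversality to the $\tau$-span are then essentially free (the $\partial_t$-component of $\eta$ is $1-\lvert Q(\partial_t)\rvert^2>0$), and the cost shifts to the boundary computation you flag at the end. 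That computation does check out: at $y\in\partial N$ the tangent space of $\tilde\iota(N)$ is spanned by $(d\iota(T_y\partial N),0)$ and $(-d\iota(V(y)),\sigma'(0))$, and projecting $(0,1)$ gives
\[
    \eta(y) = \Bigl(\tfrac{\sigma'(0)}{\lvert d\iota(V)\rvert^2+\sigma'(0)^2}\,d\iota(V(y)),\ \tfrac{\lvert d\iota(V)\rvert^2}{\lvert d\iota(V)\rvert^2+\sigma'(0)^2}\Bigr),
\]
a positive multiple of $d\iota(V(y))$ under the bordism identification, exactly because your collar convention makes $\sigma'(0)>0$. So your sign worry is resolved affirmatively. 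Your aside that the complementary line bundle has trivial $w_1$ is true (you indeed have $\normb_{\tilde\iota}\cong\normb_\iota\oplus\varepsilon^1$, which is globally trivial via $\tau$), but it is not actually needed once you exhibit $\eta$ directly; you could drop it. In short: same route as the paper, with the explicit interpolation replaced by an orthogonal projection, trading an interior check for a boundary one.
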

    \begin{figure}[t]
        \centering
        \def\unitlength{\textwidth}
        \begin{picture}(.5,.5)%
            \put(0,0){\includegraphics[width=.5\unitlength]{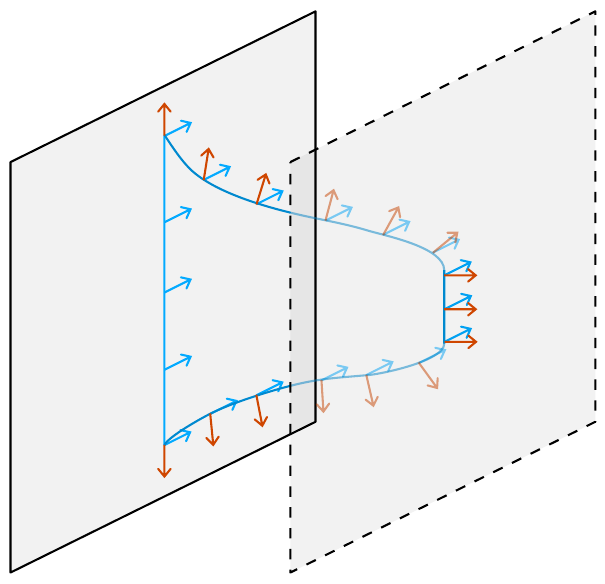}}%
            \put(0.09,.025){\small $M \times \{0\}$}%
            \put(0.325,.025){\small $M \times \{\frac{1}{2}\}$}%
            \put(0.07,.225){\color[HTML]{03AAFF}\small $\iota(N)$}%
            \put(0.175,.25){\color[HTML]{03AAFF}\small $\tau$}%
            \put(0.3,.225){\color{myblue}\small $\jmath(N)$}%
            \put(0.4,.3){\color{myorange}\small $V^{(\jmath)}$}%
        \end{picture}%
        \caption{The construction of the framed null-bordism in \cref{prop:framing_inward_stretch}.}
        \label{fig:framing_nullbordism}
    \end{figure}
    \begin{proof}
        We take a collar neighborhood $U \subset N$ together with a diffeomorphism $\psi \colon \partial N \times [0,1) \to U$. 
        Moreover, let $f \colon N \to [0,\frac{1}{2}]$ be a smooth function such that
        \begin{enumerate}[i)]
            \item $f^{-1}(0) = \partial N$,
            \item $f(U) = [0, \frac{1}{2})$,
            \item $\restr{f}{M - U} \equiv \frac{1}{2}$, and
            \item $\left(\restr{f}{U} \circ \psi \right)(p,\,\cdot\,) \colon [0, 1) \to [0, \frac{1}{2})$ is strictly increasing for any $p \in \partial N$.
        \end{enumerate}
        Consider the embedding $\jmath \colon N \hookrightarrow M \times [0,1]$ defined via $p \mapsto \left( \iota(p), f(p) \right)$.
        We extend the vector field $V$ on $\partial N$ to a smooth vector field onto $U$ such that $\mathrm{d}f(V) < 0$ where we denote the extension with the same letter.
        Now, we define a vector field along $\jmath \colon N \hookrightarrow M \times [0,1]$ given by
        \begin{align*}
            V^{(\jmath)}(p) := \begin{cases}
                \left( (1 - 2 f(p)) \, \mathrm{d}\iota(V(p)), \; 2 f(p) \restr{\frac{\partial}{\partial{t}}}{f(p)} \right) & \text{if $p \in U$}, \\
                \left(0,\restr{\frac{\partial}{\partial{t}}}{f(p)}\right) & \text{if $p \in N - U$},
            \end{cases}
        \end{align*}
        where $t \in [0,1]$ is the coordinate in the second component.
        Because $\mathrm{d}f(V) < 0$ and $f > 0$ in $U - \partial N$, the vector field $V^{(\jmath)}$ represents a section of $\normb_{\jmath}$.
        For the normal framing $\tau = (v_1, \dots, v_{n-1})$, we denote $\tau^{(\jmath)} = \left( (v_1, 0), \dots, (v_{n-1}, 0) \right)$.
        We hence obtain a normal framing $(\tau^{(\jmath)}, V^{(\jmath)})$ of $\jmath \colon N \hookrightarrow M \times [0,1]$ that restricts to $(\restr{\tau}{\partial N}, \mathrm{d}\iota_0(V))$ on the boundary $\partial N$.
        An illustration of this construction is provided in \cref{fig:framing_nullbordism}.
    \end{proof}
    As long as we are in the stable range, meaning $2k < m$, the set $\Fr_k(M)$ is an abelian group and inverses are given by flipping the orientation of the framing.
    Hence, we obtain the following corollary:
    \begin{corollary}\label{cor:framing_in_out}
        Suppose $M$ is a closed $(m+1)$-dimensional manifold with $m \geq 1$, and suppose that $2k<m$.
        Let $\iota \colon N \hookrightarrow M$ be an embedded compact manifold of dimension $k+1$ with boundary $\partial N = B_0 \sqcup B_1$.
        Assume $\tau$ is a normal framing of $\iota \colon N \hookrightarrow M$.
        Suppose $V$ is a vector field in $\restr{TN}{\partial M}$ inward-pointing on $B_0$ and outward-pointing on $B_1$.
        Then we have in $\Fr_k(M)$:
        \begin{align*}
            \left[ \iota(B_0), \left( \restr{\tau}{B_0}, \mathrm{d}\iota\!\left(\restr{V}{B_0}\right) \right) \right] = \left[ \iota(B_1), \left( \restr{\tau}{B_1}, \mathrm{d}\iota\!\left(\restr{V}{B_1}\right) \right) \right].
        \end{align*}
    \end{corollary}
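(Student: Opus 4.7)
The plan is to reduce \cref{cor:framing_in_out} to \cref{prop:framing_inward_stretch} by exploiting two facts: first, that \cref{prop:framing_inward_stretch} applies whenever the auxiliary vector field is outward-pointing on \emph{all} of $\partial N$; second, that in the stable range $2k < m$ the group operation on $\Fr_k(M)$ is disjoint union, and the inverse of a framed class is obtained by reversing the orientation of the framing, which can be realized by flipping the sign of a single framing vector.

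Concretely, I would first define a modified vector field $\tilde V$ along $\partial N = B_0 \sqcup B_1$ by setting $\tilde V|_{B_1} = V|_{B_1}$ (already outward-pointing by hypothesis) and $\tilde V|_{B_0} = -V|_{B_0}$ (outward-pointing because $V$ was inward on $B_0$). Then $\tilde V$ satisfies the hypotheses of \cref{prop:framing_inward_stretch}, so
\begin{equation*}
    \left[\iota(\partial N),\, \left(\restr{\tau}{\partial N},\, \mathrm{d}\iota(\tilde V)\right)\right] = 0 \quad \text{in } \Fr_k(M).
\end{equation*}

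Next, since $\partial N = B_0 \sqcup B_1$ and disjoint union of framed submanifolds corresponds to addition in the abelian group $\Fr_k(M)$, this equation decomposes as
\begin{equation*}
    \left[\iota(B_0),\, \left(\restr{\tau}{B_0},\, -\mathrm{d}\iota(\restr{V}{B_0})\right)\right] + \left[\iota(B_1),\, \left(\restr{\tau}{B_1},\, \mathrm{d}\iota(\restr{V}{B_1})\right)\right] = 0.
\end{equation*}
Flipping the sign of one vector in the framing on $B_0$ reverses the orientation of the framing, so the first summand equals the inverse of $\left[\iota(B_0), (\restr{\tau}{B_0}, \mathrm{d}\iota(\restr{V}{B_0}))\right]$. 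Rearranging yields the claimed equality.

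The only subtlety worth flagging in the writeup is the interplay between $\tilde V$ and the original $V$: we are not asserting that $V$ itself extends to anything outward-pointing, only that we are free to replace its restriction on $B_0$ by $-V|_{B_0}$ when invoking \cref{prop:framing_inward_stretch}, since that proposition only takes a vector field along $\partial N$ as input. I do not expect any serious obstacle; this is essentially a sign bookkeeping argument once the proposition is in hand, and the stable range hypothesis $2k < m$ is exactly what is needed to make the group structure and the inverse-via-orientation-reversal work.
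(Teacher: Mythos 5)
Your argument is correct and matches the paper's intended proof: the paper simply remarks before stating the corollary that in the stable range $\Fr_k(M)$ is a group with inverses given by reversing the orientation of the framing, and leaves the reader to apply \cref{prop:framing_inward_stretch} with the vector field flipped to $-V$ on $B_0$, split the resulting disjoint union into a sum, and move the $B_0$ term to the other side. Your write-up makes exactly that reasoning explicit.
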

    We apply the corollary to a special situation that will play an important role in the proof of the main theorem.
    \begin{lemma}\label{lem:rank1_subbundle_bordism}
        Suppose $\lambda$ is a rank one subbundle of the trivial bundle $\varepsilon^2=S^1 \times \RR^2$.
        Let $D(\lambda) \subset \varepsilon^2$ be the disk bundle associated with $\lambda$, and $C_0$ be the zero section of~$\varepsilon^2$.
        Then $2\, [C_0, (e_1, e_2)] = [\partial D(\lambda), (v_1, v_2)]$ in $\Fr_1(\varepsilon^2)$, where $(e_1, e_2)$ is the standard frame, $v_2$ is an inward-pointing normal of $\partial D(\lambda) \subset D(\lambda)$, and $(v_1, v_2)$ is completed to a frame of $\normb_{\partial D(\lambda)}$ that is induced by the orientation of $(e_1, e_2)$.
    \end{lemma}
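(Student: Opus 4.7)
The plan is to split into cases according to whether the rank-one subbundle $\lambda$ of $\varepsilon^2$ is trivial or the M\"obius bundle, these being the only two line bundles over $S^1$ up to isomorphism. In both cases the proof amounts to constructing an explicit framed bordism in $\varepsilon^2 \times [0,1]$ relating the two sides and tracking the framings carefully.

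In the trivial case, assume $\lambda = S^1 \times \RR e_1$, so that $D(\lambda)$ is a cylinder with two disjoint boundary circles $S^1 \times \{\pm e_1\}$. Each boundary component is isotopic to $C_0$ in $\varepsilon^2$ via the straight-line isotopy $s \mapsto S^1 \times \{s e_1\}$, and the prescribed framings $(v_1, v_2)$ transport along these isotopies to the constant frames $(e_2, -e_1)$ and $(-e_2, e_1)$ on $C_0$, respectively. Both differ from $(e_1, e_2)$ by constant rotations in $SO(2)$, hence are homotopic to $(e_1, e_2)$ through framings because $SO(2)$ is path-connected. Summing the two contributions yields the identity.

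In the M\"obius case, $\partial D(\lambda)$ is a single circle that double-covers $C_0$, and a genuine bordism of mixed topology is required. I would construct an embedded surface $\Sigma \subset \varepsilon^2 \times [0,1]$ by deformation-retracting $\partial D(\lambda)$ through the M\"obius band $D(\lambda)$ onto $C_0$ via the scaling $v \mapsto (1-t)v$, and then perturbing near $t = 1$ in a direction transverse to $\lambda$ so that the two sheets which would otherwise collide at $C_0$ separate into two disjoint parallel copies. The resulting $\Sigma$ is an embedded cylinder whose boundary is $\partial D(\lambda)$ at $t=0$ and two parallel copies of $C_0$ at $t=1$. Since $\Sigma$ deformation-retracts to $\partial D(\lambda)$, on which the normal bundle in $\varepsilon^2$ is trivializable, the rank-two normal bundle of $\Sigma$ in $\varepsilon^2 \times [0,1]$ is trivializable as well, so the framing $(v_1, v_2)$ extends to a normal framing of $\Sigma$. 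A direct local computation at $t = 1$ identifies the restriction to each copy of $C_0$ with $(e_1, e_2)$ up to framing homotopy, using again that $SO(2)$ is path-connected.

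The main obstacle is the M\"obius case, specifically the careful execution of the perturbation near $t = 1$ so that $\Sigma$ is genuinely embedded (rather than merely immersed), together with the verification that the extended framing restricts to $(e_1, e_2)$ on both resulting copies of $C_0$. This is the step where the non-orientability of $\lambda$ forces the bordism to mix the two components, and it is what makes the statement non-trivial beyond the purely homological observation that both sides project to $2[C_0]$ in $H_1(\varepsilon^2)$.
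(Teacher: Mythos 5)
Your trivial case is correct and is essentially the paper's argument: both observe that the boundary framing $(v_1,v_2)$ on each of the two cylinder-boundary circles is a constant rotation of $(e_1,e_2)$, hence homotopic to it through framings since $SO(2)$ is connected.

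The M\"obius case, however, contains a genuine gap that is not merely a matter of ``careful execution of the perturbation.'' The surface $\Sigma$ you want has three boundary circles --- $\partial D(\lambda)$ at $t=0$ and two disjoint copies of $C_0$ at $t=1$ --- so it cannot be a cylinder; topologically it must be a pair of pants. Your cone, built by scaling $\partial D(\lambda)$ down to $C_0$, has as its $t$-slice the single connected circle $(1-t)\partial D(\lambda)$ winding twice around $S^1$, and a small perturbation of this cone cannot change the picture: the continuous image of a connected circle is connected, so the top boundary will remain one circle, never two parallel copies of $C_0$. What is needed is a saddle of $\pr_2|_\Sigma$, i.e.\ an index-one handle, to split one circle into two, and ``perturbing near $t=1$'' does not produce it. This is exactly what the paper's proof supplies: an embedded pair of pants in $\varepsilon^2$ with boundary $C_-\sqcup C_+\sqcup\partial D(\lambda)$, whose rank-one normal bundle is trivial because both the pair of pants and $\varepsilon^2$ are orientable, to which \cref{cor:framing_in_out} is then applied. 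Two smaller related issues in your write-up: $\Sigma$ deformation-retracts to a wedge of two circles, not to $\partial D(\lambda)$, so your trivialization argument for $\normb_\Sigma$ would need repair; and appealing only to $SO(2)$ being path-connected is insufficient for the final framing comparison, since framings over a circle are classified by $\pi_1(SO(2))\cong\ZZ$ and the frame $(v_1,v_2)$ on $\partial D(\lambda)$ has winding number one --- a fact the pair-of-pants bordism is precisely what absorbs.
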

    \begin{figure}[t]
        \centering
        \def\unitlength{\textwidth}
        \begin{picture}(1,.462)%
            \put(0,0){\includegraphics[width=\unitlength]{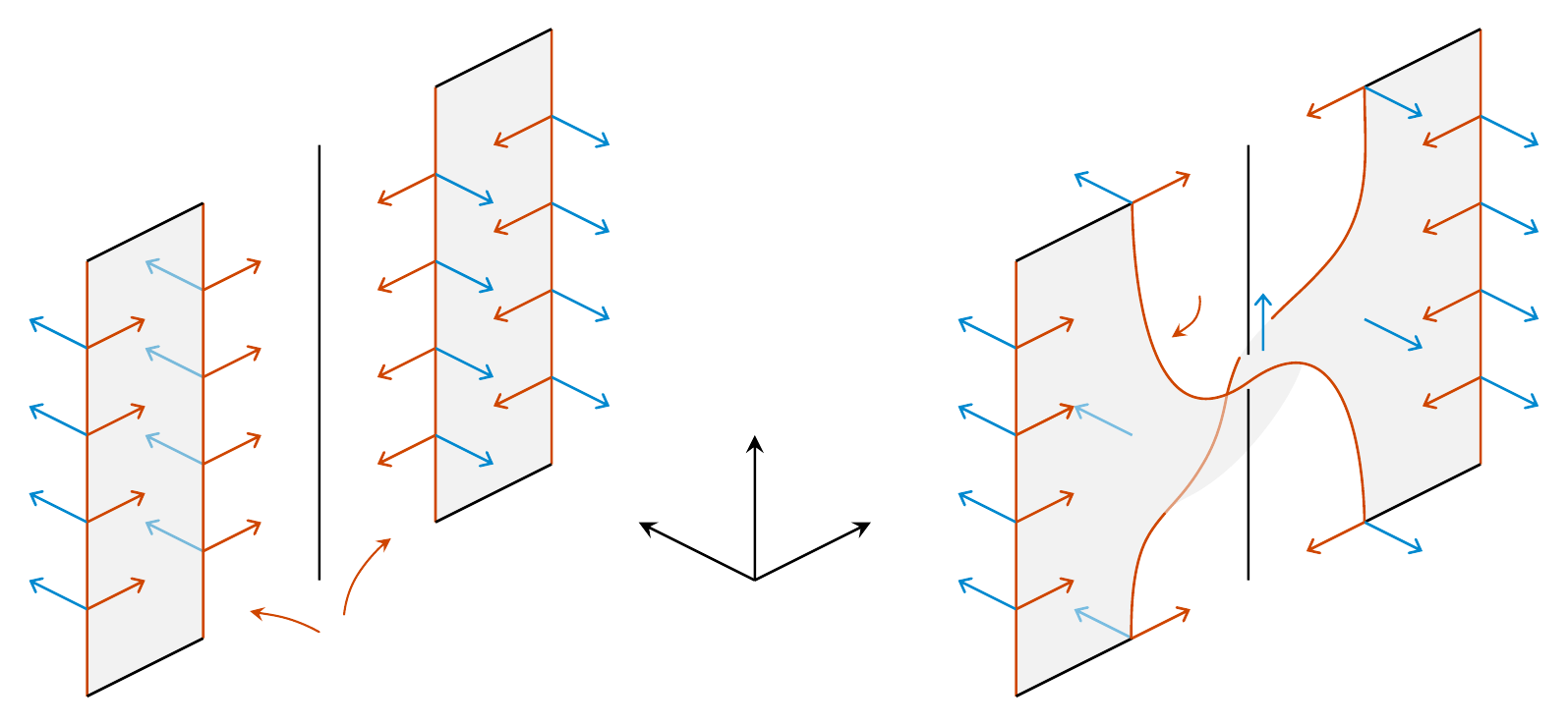}}%
            \put(0.465,.195){\small $TS^1$}%
            \put(0.55,.14){\small $e_2$}%
            \put(0.4,.14){\small $e_1$}%
            \put(0.4,.38){\color{myblue}\small $\tau$}%
            \put(0.6,.28){\color{myblue}\small $\tau$}%
            \put(0.1975,.38){\small $C_0$}%
            \put(0.2125,.05){\color{myorange}\tiny $\partial D(\lambda)$}%
            \put(0.7875,.38){\small $C_0$}%
            \put(0.7305,.285){\color{myorange}\tiny $\partial D(\lambda)$}%
            \put(0.605,.025){\color{myorange}\small $C_-$}%
            \put(0.015,.025){\color{myorange}\small $C_-$}%
            \put(0.36,.42){\color{myorange}\small $C_+$}%
            \put(0.95,.42){\color{myorange}\small $C_+$}%
        \end{picture}%
        \caption{The construction of framed bordisms in \cref{lem:rank1_subbundle_bordism}. Left the case when $\lambda$ is trivial, and right when $\lambda$ is the non-trivial bundle.}
        \label{fig:tubular_framing}
    \end{figure}
    \begin{proof}
        Let $C_\pm$ be two copies of $C_0$ displaced in positive and negative $e_2$-direction respectively.
        We endow $C_\pm$ with the normal frame $(\pm e_1, \pm e_2)$ so that $C_- \sqcup C_+$ represents $2\, [C_0, (e_1, e_2)]$.
        We must consider two cases: either $\lambda$ is trivializable or not.
        For the first case, assume without loss of generality that $\lambda = S^1 \times \{0\} \times \RR$, meaning that $e_2$ is a frame of $\lambda$.
        Then $\pm e_1$ extends trivially to a normal frame $\tau$ of $D(\lambda)$, and the statement follows from \cref{cor:framing_in_out} as depicted in the left of \cref{fig:tubular_framing}.
        Suppose that $\lambda$ is non-trivializable, that is $D(\lambda)$ is homeomorphic to the M\"obius band.
        In that case, the bordism is given by a pair of pants as depicted in the right of \cref{fig:tubular_framing}, and $\pm e_1$ over $C_\pm$ extends to a normal frame $\tau$ over the pair of pants.
        Applying \cref{cor:framing_in_out}, where $M = \varepsilon^2$, we get a normal frame on $\partial D(\lambda)$ that twists around $\partial D(\lambda)$ once.
        This frame is homotopic (through frames) to $(v_1, v_2)$ defined in the lemma, which proves the claim.
    \end{proof}
    The following proposition relates \cref{lem:char_func_geom} to normal bundles of embedded surfaces, and thus provides a deeper geometric insight.
    \begin{prop}\label{prop:surface_normal_whitney_twice}
        Assume that $X$ is of type~II, and let $\Sigma \subset X$ be a closed connected surface.
        Moreover, let $L \subset X$ be a link with an oriented normal bundle representing the class $\beta_1([\Sigma]_2) \in H_1(X; o_X)$.
        Then $w_2(\normb_\Sigma) = 0$ if and only if $2\,[L, \framing] = [U_0]$ independent of the choice of normal positive framing $\framing$ over $L$.
    \end{prop}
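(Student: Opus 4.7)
The plan is to show that $2[L,\framing]=[U_0]$ is equivalent to the existence of an $(n-1)$-frame of the rank-$n$ bundle $\normb_\Sigma\oplus\varepsilon^1$ over $\Sigma$, which by Milnor--Stasheff (cf.~\cref{rmk:vb_trivial_sw_class}) is equivalent to $w_2(\normb_\Sigma)=0$. To set up the comparison, I would first choose a concrete geometric representative: let $s$ be a section of $\det(\normb_\Sigma)$ over $\Sigma$ transverse to the zero section, put $L:=s^{-1}(0)\subset\Sigma$, and fix any positive framing $\framing$ of $\normb_{L\subset X}$. By \cref{rmk:bockstein_poincare_dual}, $L$ represents a preimage of $\beta_1([\Sigma]_2)$ under $h$, and by \cref{rmk:independent_framing} the conclusion does not depend on $\framing$. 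Note that $\normb_{L\subset\Sigma}\cong\det(\normb_\Sigma)|_L$, so a tubular neighborhood $T\subset\Sigma$ of $L$ coincides with the disk bundle of this line bundle, and the complement $\Sigma':=\Sigma\setminus\mathrm{int}(T)$ has orientable normal bundle.

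Next, I would apply \cref{lem:rank1_subbundle_bordism} fiberwise to identify $2[L,\framing]=[\partial T,\tilde\framing]$ in $\Fr_1(X)$ for a specific induced framing $\tilde\framing$. This uses a trivial rank-$2$ subbundle of $\normb_{L\subset X}$ obtained by pairing $\normb_{L\subset\Sigma}$ with an isomorphic rank-$1$ subbundle of $\normb_\Sigma|_L$ extracted from the splitting $\normb_\Sigma|_L\cong\det(\normb_\Sigma)|_L\oplus\varepsilon^{n-2}$ (this rank-$2$ bundle over the $1$-manifold $L$ is automatically trivial), with the remaining $n-2$ normal directions carrying a trivial framing. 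I would then push $\Sigma$ into $X\times[0,1/2]$ via the graph of a smoothed Morse function $f\colon\Sigma\to[0,1/2]$ with $f^{-1}(0)=L$ and $f\equiv 1/2$ outside a neighborhood of $T$, using \cref{lem:isotope_bordism}. For small $\epsilon>0$, the slice $\Sigma^{\geq\epsilon}:=f^{-1}[\epsilon,1/2]$ gives a bordism in $X\times[\epsilon,1/2]$ from a curve isotopic to $\partial T$ (sitting in $X\times\{\epsilon\}$) to the empty set, with normal bundle $\normb_\Sigma|_{\Sigma'}\oplus\varepsilon^1$. By \cref{prop:framing_inward_stretch}, $[\partial T,\tilde\framing]=[U_0]$ precisely when $\tilde\framing$ extends to a framing of this bordism.

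The crux is to identify this extension condition with the vanishing of $w_2(\normb_\Sigma)$. Extending $\tilde\framing$ over $\Sigma^{\geq\epsilon}$ is equivalent to completing the canonical $(n-1)$-frame over $T$ (given by the trivial $\varepsilon^{n-2}$-summand from the splitting $\normb_\Sigma|_T\cong\det(\normb_\Sigma)|_T\oplus\varepsilon^{n-2}$, together with the $\varepsilon^1$-direction) to an $(n-1)$-frame of $\normb_\Sigma\oplus\varepsilon^1$ over the whole of $\Sigma$; by Milnor--Stasheff, the obstruction is precisely $w_2(\normb_\Sigma\oplus\varepsilon^1)=w_2(\normb_\Sigma)\in H^2(\Sigma;\ZZ_2)$. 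The main obstacle is to verify that the framing $\tilde\framing$ produced by \cref{lem:rank1_subbundle_bordism} agrees on $\partial T$ with the restriction of the natural $(n-1)$-frame over $T$ completed by the outward radial direction along $\partial T$; this amounts to carefully tracking how the twist in the $\lambda$-direction inherent to the construction in \cref{lem:rank1_subbundle_bordism} interacts with the mod-$2$ reduction detecting $w_2$.
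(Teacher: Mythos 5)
Your strategy aligns with the paper's at a high level: realize $\beta_1([\Sigma]_2)$ by a curve inside $\Sigma$ Poincaré dual to $w_1(\normb_\Sigma)$, double it to the boundary of a tubular neighbourhood via \cref{lem:rank1_subbundle_bordism}, and then translate ``$2[L,\framing]=[U_0]$'' into an extension problem for an $(n-1)$-frame whose obstruction is $w_2(\normb_\Sigma)$. But the step you flag at the end — ``the main obstacle is to verify that the framing $\tilde\framing$ produced by \cref{lem:rank1_subbundle_bordism} agrees on $\partial T$ with the restriction of the natural $(n-1)$-frame'' — is a genuine gap, and it is precisely where the paper does its real work. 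Without that identification the reduction to the obstruction $w_2(\normb_\Sigma)$ is not established, so the proof is incomplete as written.

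The paper closes this gap by a deliberate choice of the framing $\framing$ (legitimate because of \cref{rmk:independent_framing}): it splits $\normb_C \cong \restr{\normb_\Sigma}{C} \oplus \normb^\Sigma_C \cong \varepsilon^{n-2} \oplus \gamma \oplus \normb^\Sigma_C$ and takes $\framing=(e_1,\dots,e_n)$ so that $(e_{n-1},e_n)$ is a frame of the rank-two bundle $\gamma\oplus\normb^\Sigma_C$. Applying \cref{lem:rank1_subbundle_bordism} with $\lambda=\normb^\Sigma_C$ then gives $2[C,\framing]=[\partial N,(e_1,\dots,e_{n-2},v_1,v_2)]$ in which, by construction, $\tau=(e_1,\dots,e_{n-2},v_1)$ \emph{is} a positive frame of $\restr{\normb_\Sigma}{\partial N}$; there is no leftover twist to track. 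The paper then stays inside $X$ and uses \cref{prop:unique_framing} (rather than your push into $X\times[0,1]$ and \cref{prop:framing_inward_stretch}) to extend $\tau$ over $\Sigma-(N\cup D)$ to a frame over $U=\partial D$, reducing everything to whether $\tau$ extends over the disk $D$, i.e.\ whether $\normb_\Sigma$ admits an $(n-2)$-frame, i.e.\ whether $w_2(\normb_\Sigma)=0$. You should also record the case $w_1(\normb_\Sigma)=0$ explicitly: then $C$ bounds inside $\Sigma$, so $[C,\framing]\in\ker(h)$, and $w_2(\normb_\Sigma)=0$ holds automatically by \cref{cor:geom_typeii}; in your setup this corresponds to $L=s^{-1}(0)=\varnothing$, but stating it avoids ambiguity.
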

    \begin{proof}
        First, observe that $L$ with its orientation on $\normb_L$ is homologous in $H_1(X; o_X)$ to a closed curve $C \subset \Sigma \subset X$ with an orientation of $\normb_C$.
        This follows from naturality of the Bockstein homomorphism
        \[
            \begin{tikzcd}
                H_2(\Sigma; \ZZ_2) \arrow[r, "\beta_1"] \arrow[d, "\iota_*"'] & H_1(\Sigma; \iota^* o_X) \arrow[d, "\iota_*"] \\
                H_2(X; \ZZ_2) \arrow[r, "\beta_1"] & H_1(X; o_X),
            \end{tikzcd}
        \]
        where $\iota \colon \Sigma \hookrightarrow X$ is the inclusion map.
        Recall that the $\smod 2$ homology class of $C$ is Poincar\'e dual to $w_1(\normb_\Sigma)$ according to \cref{rmk:bockstein_poincare_dual}.
        Suppose $\normb_\Sigma$ is orientable, meaning that $[C]_2 = 0$ in $H_1(\Sigma; \ZZ_2)$.
        It follows that $C$ is bounded by a connected component of $\Sigma - C$.
        This implies that $C$ with an orientation of $\normb_C$ is null-homologous in $H_1(X; o_X)$.
        Then the statement follows automatically since $[C, \framing] \in \ker(h)$ for any normal framing $\framing$, and $w_2(\normb_\Sigma) = 0$ by \cref{cor:geom_typeii} as $X$ is of type~II.
        
        Now suppose that $\normb_\Sigma$ is non-orientable.
        As $C$ represents the Poincar\'e dual of $w_1(\normb_\Sigma)$, it follows that $\restr{\normb_\Sigma}{\Sigma - C}$ is orientable.
        Notice that since $C$ is not null-homologous, the complement $\Sigma - C$ remains connected.
        Denote with $\normb^\Sigma_C$ the normal bundle of $C$ within $\Sigma$.
        Let $N \subset \Sigma$ be the closed tubular neighborhood associated with $\normb^\Sigma_C$.
        We cut off an embedded disk $D$ of $\Sigma - N$.
        Without loss of generality, we may assume that $\partial D = U$.
        We may also assume that $U_0$ is framed by $(\tau, V)$, where $\tau$ extends to a normal framing over $D$ and $V$ is an inward-pointing vector field of $\partial D \subset D$.
        That is, $U_0$ extends over a stretched version of $D$ in $X \times [0,1]$ according to \cref{prop:framing_inward_stretch}.
        Let $\framing$ be any normal framing of $C$ in $X$.
        We want to show that $2\,[C,\framing] = [U_0]$ if and only if $w_2(\normb_\Sigma) = 0$.
        By obstruction theory, this is equivalent to showing that $2\,[C,\framing] = [U_0]$ if and only if $\normb_\Sigma$ admits $n-2$ linearly independent sections.
        Notice that we can split $\normb_C$ into line bundles as follows:
        \begin{align*}
            \normb_C \cong \restr{\normb_\Sigma}{C} \oplus \normb^\Sigma_C \cong \varepsilon^{n-2} \oplus \gamma \oplus \normb^\Sigma_C.
        \end{align*}
        Without loss of generality, we may choose the framing $\framing = (e_1, \dots, e_n)$ in such a way that $(e_{n-1}, e_n)$ is a framing of $\gamma \oplus \normb^\Sigma_C$.
        Notice that we can extend $(e_1, \dots, e_{n-2})$ over $N$ as an $(n-2)$-framing.
        By \cref{lem:rank1_subbundle_bordism}, the bordism class $2 \, [C, \framing]$ is represented by $\partial N$ with normal framing $(e_1, \dots, e_{n-2}, v_1, v_2)$, where $v_1$ is normal to $N$, and $v_2$ is the inward-pointing normal of $\partial N \subset N$.
        As discussed in \cref{prop:unique_framing}, the normal framing $\tau = (e_1, \dots, e_{n-2}, v_1)$ can be extended over $\Sigma - (N \cup D)$ to give a framing of $\restr{\normb_\Sigma}{U}$.
        If we assume $2\,[C, \framing] = [U_0]$, then the framing $\tau$ also extends over $D$.
        This implies that $(e_1, \dots, e_{n-2})$ is an $(n-2)$-framing of $\normb_\Sigma$ as desired.
        For the converse direction, assume that $(e_1, \dots, e_{n-2})$ is an $(n-2)$-framing of $\normb_\Sigma$.%
        \footnote{Such a framing predetermines the splitting $\restr{\normb_\Sigma}{C} \cong \varepsilon^{n-2} \oplus \gamma \oplus \normb^\Sigma_C$ into line bundles.}
        But since $\restr{\normb_\Sigma}{\Sigma - N}$ is oriented and $(e_1, \dots, e_{n-2})$ extends over $D$, the framing $\tau = (e_1, \dots, e_{n-2}, v_1)$ extends over $D$ as well.
        We thus obtain $2\, [C, \framing] = [U_0]$ as required.
    \end{proof}
    \begin{remark}\label{rmk:build_extension_geometrically}
        Let $C \subset X$ be a closed curve with an oriented normal bundle, representing a non-zero element of $\Tor_2(H_1(X; o_X))$.
        Since $\beta_1 \colon H_2(X; \ZZ_2) \to \Tor_2(H_1(X; o_X))$ is surjective, there exists an embedded closed surface $\iota \colon \Sigma \hookrightarrow X$ for which $C$ represents $\iota_! w_1(\normb_\Sigma)$.
        \cref{prop:surface_normal_whitney_twice} shows that $[C, \framing] \in \Fr_1(X)$ with any normal framing $\framing$ is of order two if $w_2(\normb_\Sigma) = 0$, and of order four otherwise.
    \end{remark}
    \begin{example}
        We consider $X = \RP^{n+1}$.
        Let $\RP^2 \subset \RP^{n+1}$ be the standard embedding.
        Notice that the fundamental class $[\RP^2]_2$ generates $H_2(\RP^{n+1}; \ZZ_2)$.
        The group $H_1(\RP^{n+1}; o_X)$ is of order at most two.
        This means that the twisted Bockstein $\beta_1 \colon H_2(\RP^{n+1}; \ZZ_2) \to H_1(\RP^{n+1}; o_X)$ is surjective, and $\beta_1([\RP^2]_2)$ generates $H_1(\RP^{n+1}; o_X)$.
        Notice that $w_1(\normb_{\RP^2}) = 0$ if and only if $H_1(\RP^{n+1}; o_X) = 0$.
        Consequently, if we compute $w_1(\normb_{\RP^2})$ and $w_2(\normb_{\RP^2})$, we can determine the type of $\RP^{n+1}$, and also the associated cohomotopy group $\pi^n(\RP^{n+1})$ by \cref{rmk:build_extension_geometrically}.
        The computation is similar to \cref{ex:real_projective_4k}, and the results are summarized in \cref{tab:real_projective}.
        \begin{table}[ht]
            \centering
            \begin{tabular}{r|c|c|c|c}
                $(n+1) \bmod 4$ & 0 & 1 & 2 & 3 \\
                \hline
                $w_1(\normb_{\RP^2})$ & 0 & 1 & 0 & 1 \\
                $w_2(\normb_{\RP^2})$ & 1 & 1 & 0 & 0 \\
                type & I & IIb & IIa & IIa \\
                $\pi^n(\RP^{n+1})$ & 0 & $\ZZ_4$ & $\ZZ_2$ & $\ZZ_2 \oplus \ZZ_2$
            \end{tabular}
            \caption{First two Stiefel-Whitney classes of $\normb_{\RP^2}$ inside $\RP^{n+1}$ as well as the type of $\RP^{n+1}$ and the associated cohomotopy group.}
            \label{tab:real_projective}
        \end{table}
        \vspace{-5pt}
    \end{example}
    We are now ready to prove the second part of \cref{thm:main} for type~II manifolds.
    For convenience, we restate the theorem here.
    \begin{theorem}\label{thm:typeii}
        If $X$ is of type II, then there is an extension of abelian groups
        \[
            \begin{tikzcd}[sep=small]
                0 \arrow[r] & \ZZ_2 \arrow[r] & \Fr_1(X) \arrow[r, "h"] & H_1(X; o_X) \arrow[r] & 0 \mathrlap{,}
            \end{tikzcd}
        \]
        classified by the unique element in $\Ext(H_1(X, o_X), \ZZ_2)$ that maps to ${w_1^2(X) + w_2(X)}$ via $\delta$ in the universal coefficient sequence~\eqref{eq:twisted_uct}.
    \end{theorem}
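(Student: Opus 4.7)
The plan is to combine the geometric description of the extension class given by \cref{lem:char_func_geom} and \cref{prop:surface_normal_whitney_twice} with the duality between $\delta$ and the twisted Bockstein $\beta_1$ established in \cref{lem:bockstein_dual_uct}. Since $\delta$ is injective in the universal coefficient sequence \eqref{eq:twisted_uct}, there is at most one element of $\Ext(H_1(X; o_X), \ZZ_2)$ mapping to $w_1^2(X) + w_2(X)$. Moreover, the type II hypothesis together with \cref{lem:func_measures_w2} implies that the image of $w_1^2(X) + w_2(X)$ in $\Hom(H_2(X; o_X), \ZZ_2)$ vanishes, so by exactness of \eqref{eq:twisted_uct} such a preimage actually exists; the task is only to match it with the extension class.

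Let $\varepsilon_X \in \Tor_2(H_1(X; o_X))^*$ denote the functional associated with the extension \eqref{eq:ses:typeii}, which via the isomorphism $\Psi$ of \eqref{eq:ext_tor2_iso} corresponds to the classifying class in $\Ext(H_1(X; o_X), \ZZ_2)$. By \cref{lem:bockstein_dual_uct}, showing that this class maps under $\delta$ to $w_1^2(X) + w_2(X)$ in $H^2(X; \ZZ_2) \cong \Hom(H_2(X; \ZZ_2), \ZZ_2)$ reduces to checking, for every $[\Sigma]_2 \in H_2(X; \ZZ_2)$, the identity
\[
    \varepsilon_X(\beta_1([\Sigma]_2)) = \langle w_1^2(X) + w_2(X), [\Sigma]_2 \rangle.
\]
Using the Thom-style representability discussion of \cref{sec:preliminaries}, I represent $[\Sigma]_2$ by an embedded closed connected surface $\iota \colon \Sigma \hookrightarrow X$ (decomposing into components and using bilinearity of both sides when $\Sigma$ is disconnected). \cref{prop:surface_normal_whitney_twice} then supplies a link $L$ with oriented normal bundle representing $\beta_1([\Sigma]_2)$ and asserts that, for any positive normal framing $\framing$, we have $2[L, \framing] = [U_0]$ if and only if $w_2(\normb_\iota) = 0$. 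Combined with \cref{lem:char_func_geom}, this yields $\varepsilon_X(\beta_1([\Sigma]_2)) = \langle w_2(\normb_\iota), [\Sigma]_2 \rangle$, and a final application of \cref{lem:func_measures_w2} rewrites the right-hand side as $\langle w_1^2(X) + w_2(X), \iota_*[\Sigma]_2 \rangle$, giving the desired equality.

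The main obstacle is logistical: one must carefully align the algebraic description of $\delta$ via the free resolution \eqref{eq:free_res} with the geometric characterization of $\varepsilon_X$ through framings on torsion curves. That alignment is precisely the content of \cref{lem:char_func_geom} and \cref{lem:bockstein_dual_uct}, so once those together with \cref{prop:surface_normal_whitney_twice} are in hand, the argument collapses to a short diagram chase; no further geometric input is required. A minor technical point is that \cref{prop:surface_normal_whitney_twice} is stated for a single connected surface, but this is harmless since $H_2(X;\ZZ_2)$ is additively generated by such classes and every step above is linear in $[\Sigma]_2$.
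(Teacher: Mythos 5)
Your proposal is correct and follows essentially the same route as the paper's proof: identify the extension class with the functional $\varepsilon_X$ via $\Psi$, use \cref{lem:bockstein_dual_uct} to translate $\delta$ into $\beta_1^*$, and then verify $\varepsilon_X \circ \beta_1 = w_1^2 + w_2$ on embedded surface classes via \cref{lem:char_func_geom}, \cref{prop:surface_normal_whitney_twice}, and \cref{lem:func_measures_w2}. Your opening remark on the existence and uniqueness of the preimage under $\delta$ (using injectivity of $\delta$ and the type~II hypothesis) and the explicit note about reducing to connected surfaces by linearity are welcome clarifications that the paper leaves implicit, but they do not alter the argument.
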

    \begin{proof}
        From the previous discussion, we have seen that the extension is uniquely determined by the functional $$\varepsilon_X \colon \Tor_2(H_1(X; o_X)) \to \ZZ_2$$ through~\eqref{eq:ext_tor2_iso}.
        Together with \cref{lem:bockstein_dual_uct}, which relates $\delta$ to the dual Bockstein, it is therefore sufficient to show that $\varepsilon_X \circ \beta_1 = w_1^2 + w_2$.
        Let $\Sigma \subset X$ be a closed connected surface representing a homology class in $H_2(X; \ZZ_2)$.
        Suppose the element ${\beta_1([\Sigma]_2) \in \Tor_2(H_1(X; o_X))}$ is represented by a normally oriented link $L$.
        Then it follows from \cref{lem:char_func_geom} and \cref{prop:surface_normal_whitney_twice} that both $(\varepsilon_X \circ \beta_1)([\Sigma]_2)$ and $\left\langle w_2(\normb_\Sigma), [\Sigma]_2 \right\rangle$ are either 0 or 1 depending on whether $2[L, \framing]$ is $[U_0]$ or $[U_1]$ for any choice of framing~$\framing$.
        This implies
        \begin{align*}
            (\varepsilon_X \circ \beta_1)([\Sigma]_2) = \left\langle w_2(\normb_\Sigma), [\Sigma]_2 \right\rangle,
        \end{align*}
        which is equal to $(w_1^2 + w_2)([\Sigma]_2)$ by~\cref{lem:func_measures_w2}.
        The statement then follows from the fact that $H_2(X; \ZZ_2)$ is generated by closed embedded surfaces.
    \end{proof}
    \begin{corollary}\label{cor:pin_split}
        The short exact sequence~\eqref{eq:ses:typeii} splits if and only if $X$ is $\Pinm$.
        Therefore, in that case, we have $\Fr_1(X) \cong H_1(X; o_X) \oplus \ZZ_2$.
    \end{corollary}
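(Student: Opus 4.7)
The plan is to derive this corollary directly from Theorem \ref{thm:typeii} by exploiting the injectivity of $\delta$ in the universal coefficient sequence \eqref{eq:twisted_uct}. Since $\Pinm$ manifolds are automatically of type II (as already noted in \cref{sec:preliminaries}), there is no loss of generality in assuming that $X$ is of type II; otherwise the short exact sequence \eqref{eq:ses:typeii} is not even defined, and both sides of the equivalence fail.

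First I would invoke Theorem \ref{thm:typeii} to identify the extension class of \eqref{eq:ses:typeii} with the unique $\xi \in \Ext(H_1(X; o_X), \ZZ_2)$ satisfying $\delta(\xi) = w_1^2(X) + w_2(X)$. Next I would recall the standard homological algebra fact that an extension of abelian groups splits precisely when its associated class in $\Ext$ vanishes, so that splitting of \eqref{eq:ses:typeii} is equivalent to $\xi = 0$. Finally, since $\delta$ is the left-most non-trivial map in the exact sequence \eqref{eq:twisted_uct} and is therefore injective, the vanishing of $\xi$ is equivalent to the vanishing of $w_1^2(X) + w_2(X)$ in $H^2(X; \ZZ_2)$, which is exactly the primary obstruction for $X$ to admit a $\Pinm$-structure.

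The remaining assertion $\Fr_1(X) \cong H_1(X; o_X) \oplus \ZZ_2$ in the split case is then immediate: any split short exact sequence of abelian groups realizes its middle term as the direct sum of the outer two. I do not foresee any genuine obstacle here; all the substantive geometric and homological content has been packaged into Theorem \ref{thm:typeii} and the universal coefficient sequence \eqref{eq:twisted_uct}, and the corollary is essentially bookkeeping on top of those results.
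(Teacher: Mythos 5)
Your proposal is correct and matches the paper's implicit argument: the paper states the corollary without proof, evidently intending exactly the reasoning you give — combine \cref{thm:typeii} with the injectivity of $\delta$ in \eqref{eq:twisted_uct} to see that the extension class vanishes if and only if $w_1^2(X) + w_2(X) = 0$, which is precisely the obstruction to a $\Pinm$-structure. Your remark about the type~I case is also right, though unnecessary given the standing hypothesis at the start of \cref{sec:typeii} that $X$ is of type~II.
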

    The last \cref{cor:pin_split} generalizes the result obtained by Konstantis~\cite{bib:konstantis:counting_inv} to non-orientable manifolds.
    In particular, Konstantis examines the splitting maps and obtains a counting invariant from it.
    This is the subject of the following sections.

    \section{Manifolds of type IIa}
    \label{sec:typeiia}
    In the previous section, we have shown that~\eqref{eq:ses:typeii} splits if and only if $X$ is $\Pinm$.
    In this section, we sort out the geometric interpretation of the splitting maps.
    Throughout this section, we assume that $X$ is $\Pinm$, and we fix a $\Pinm$-structure on $X$.
    Suppose $(L_0, \framing_0)$ is a normally framed link in $X$.
    As $\normb_{L_0}$ is trivialized via $\framing_0$ and $\restr{TX}{L_0}$ carries a $\Pinm$-structure, $TL_0$ inherits a $\Pinm$-structure $\sigma_0$ via stabilization (see~\cite[Lemma~1.6]{bib:kir_tay:pin}).
    This defines a (stable) bordism class $[L_0, \sigma_0] \in \PinBor1$.
    \begin{lemma}
        The bordism class $[L_0, \sigma_0] \in \PinBor1$ as constructed above does not depend on the choice of representative in the bordism class $[L_0, \framing_0] \in \Fr_1(X)$.
    \end{lemma}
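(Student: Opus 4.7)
The plan is to realize a bordism of framed links as an actual surface in $X \times [0,1]$ and then transport the fixed $\Pinm$-structure on $X$ along this surface, checking that stabilization commutes with restriction to the boundary.

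More concretely, suppose $(L_0, \framing_0)$ and $(L_1, \framing_1)$ are two representatives of the same class in $\Fr_1(X)$. Choose an embedded framed bordism $(W, \Phi) \hookrightarrow X \times [0,1]$ between them, so $W$ is a compact surface with $\partial W = L_0 \sqcup L_1$, mapping boundary to boundary, and $\Phi$ is a normal framing restricting to $\framing_i$ on $L_i$. The first step is to promote the $\Pinm$-structure on $X$ to one on $X \times [0,1]$ via the pullback along the projection $\pi_1 \colon X \times [0,1] \to X$, using the canonical decomposition $T(X \times [0,1]) \cong \pi_1^* TX \oplus \varepsilon^1$.

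Next I would restrict this $\Pinm$-structure to $W$. The splitting $T(X \times [0,1])|_W \cong TW \oplus \normb_W$ together with the trivialization of $\normb_W$ furnished by $\Phi$ yields, via the stabilization lemma of Kirby–Taylor~\cite{bib:kir_tay:pin}, an induced $\Pinm$-structure $\tau$ on $TW$. The crucial point to verify is that $\tau$ restricts at the boundary to $\sigma_0 \sqcup \sigma_1$, so that $(W, \tau)$ is a $\Pinm$-bordism witnessing $[L_0, \sigma_0] = [L_1, \sigma_1]$ in $\PinBor1$. To see this, observe that along $L_i \subset W$ we have the chain of stable isomorphisms
\begin{equation*}
    \restr{TX}{L_i} \cong \restr{T(X \times [0,1])}{L_i} \cong \restr{TW}{L_i} \oplus \restr{\normb_W}{L_i} \cong TL_i \oplus \normb^W_{L_i} \oplus \restr{\normb_W}{L_i},
\end{equation*}
and the framing $\restr{\Phi}{L_i}$ together with the outward normal in $W$ recovers precisely the framing $\framing_i$ of $\normb_{L_i}$ in $X$ (up to stable homotopy of framings, which does not affect the induced stable $\Pinm$-structure). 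Thus the stabilization procedure applied to $\tau$ along the boundary yields exactly $\sigma_i$.

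The main obstacle, and the only part that requires genuine care, is the compatibility of stabilization with restriction to the boundary — essentially ensuring that the extra outward normal direction contributed by $W \supset L_i$ is absorbed correctly into the stable trivialization used to define $\sigma_i$. Once this functoriality of the Kirby–Taylor stabilization is made precise, the argument is straightforward. Independence of the auxiliary choice of bordism $(W, \Phi)$ is automatic, since any two such bordisms, glued along their common boundary, yield a closed framed surface in $X \times [0,1]$ whose induced $\Pinm$-structure provides the required $\Pinm$-null-bordism after the same stabilization argument.
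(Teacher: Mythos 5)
Your proof is correct and follows essentially the same approach as the paper: take the framed bordism $\Sigma \subset X \times [0,1]$, use the normal framing $\Phi$ to induce a $\Pinm$-structure on $T\Sigma$ from that of $X$, and check boundary compatibility. You spell out the stabilization bookkeeping more explicitly than the paper, which is fine; your final paragraph about independence of the auxiliary bordism $(W,\Phi)$ is superfluous, since one witness suffices to establish the equality $[L_0,\sigma_0]=[L_1,\sigma_1]$ in $\PinBor1$.
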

    \begin{proof}
        We assume that $(L_0, \framing_0)$ and $(L_1, \framing_1)$ are normally framed bordant through $\Sigma \subset X \times [0,1]$ with normal framing $\framing$.
        That is, $(\Sigma, \framing)$ restricts to $(L_i, \framing_i)$ when intersected with $X \times \{i\}$ for each $i=0,1$.
        Again, $T \Sigma$ inherits a $\Pinm$-structure $\sigma$ from $\restr{TX}{\Sigma}$ because $\normb_\Sigma$ is trivialized through $\framing$.
        By construction, the restriction of $\sigma$ to $X \times \{i\}$ agrees with the $\Pinm$-structure $\sigma_i$ on $TL_i$ induced by $\framing_i$.
        Therefore, $(\Sigma, \sigma)$ is a $\Pinm$ bordism from $(L_0, \sigma_0)$ to $(L_1, \sigma_1)$ which proves the claim.
    \end{proof}
    As discussed in~\cite[Theorem~2.1]{bib:kir_tay:pin}, the group $\PinBor1$ is canonically isomorphic to $\ZZ_2$ generated by the circle $S^1_{\mathrm{Lie}}$ with the Lie group framing.
    This gives rise to a well-defined group homomorphism $\kappa \colon \Fr_1(X) \to \ZZ_2$.
    Next, we show that $\kappa$ restricted to $\ker(h)$ is an isomorphism.
    \begin{prop}
        The restriction $\restr{\kappa}{\ker(h)} \colon \ker(h) \to \ZZ_2$ is an isomorphism.
    \end{prop}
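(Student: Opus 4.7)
Since $X$ is assumed $\Pinm$ (hence of type II by the observation just before \cref{lem:func_measures_w2}), the discussion of \cref{sec:kernel} yields $\ker(h) \cong \ZZ_2$, generated by $[U_1]$ by \cref{lem:ker_h_generator}, while $\PinBor1 \cong \ZZ_2$ by Kirby--Taylor. Both source and target being $\ZZ_2$, it suffices to prove $\kappa([U_1]) \neq 0$, equivalently to compute $\kappa$ on the two canonical generators $[U_0]$ and $[U_1]$ and show the values differ.

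The first step is to show $\kappa([U_0]) = 0$. By construction, the framing of $U_0$ extends over the disk $D \subset X \times [0,1]$, so $\normb_D$ is trivialized. Since $X \times [0,1]$ inherits a $\Pinm$-structure from the fixed one on $X$, and $\restr{T(X \times [0,1])}{D} \cong TD \oplus \normb_D$, stabilization along the trivialization of $\normb_D$ transports the ambient $\Pinm$-structure to a $\Pinm$-structure on $TD$ which, by construction, restricts on $\partial D = U$ to the $\Pinm$-structure $\sigma_0$ defining $\kappa([U_0])$. Hence $(U_0, \sigma_0)$ is $\Pinm$ null-bordant, so $\kappa([U_0]) = 0$.

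The second step is to compare the induced $\Pinm$-structures $\sigma_0$ and $\sigma_1$ on the same underlying circle $U$. The two normal framings of $U$ differ by the generator of $\pi_1(\O(n)) \cong \ZZ_2$ (for $n \geq 3$), since by definition exactly one of them extends over $D$. Under the stabilization procedure of \cite[Lemma~1.6]{bib:kir_tay:pin}, changing a trivialization of $\normb_U$ by a loop representing the nontrivial class in $\pi_1(\O(n))$ twists the resulting $\Pinm$-structure on $TU$ by the nontrivial element of $H^1(S^1;\ZZ_2)$, producing the other of the two $\Pinm$-structures on $S^1$. Hence $\sigma_1 \neq \sigma_0$, and so $[U_1, \sigma_1] \neq [U_0, \sigma_0] = 0$ in $\PinBor1 \cong \ZZ_2$. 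Combined with the first step, this yields $\kappa([U_1]) = 1$, and $\restr{\kappa}{\ker(h)}$ is therefore the identity on $\ZZ_2$.

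The main technical obstacle is the second step, namely verifying that the $\ZZ_2$-action of $\pi_1(\O(n))$ on normal framings induces the $H^1(S^1;\ZZ_2)$-action on induced tangential $\Pinm$-structures. This is essentially formal once one sets up stabilization as a principal $H^1(-;\ZZ_2)$-equivariant map between the torsors of $\Pinm$-structures on $TU$ and on $\restr{TX}{U}$ relative to a trivialization of $\normb_U$, but care is needed because $U_0$ and $U_1$ differ specifically by a loop that, viewed in $\O(n) \subset \O(n+1)$, is still nontrivial after stabilization (true since $\pi_1(\O(n)) \to \pi_1(\O(n+1))$ is an isomorphism for $n \geq 3$). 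All other ingredients are straightforward.
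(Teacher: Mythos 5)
Your proof is correct and takes essentially the same approach as the paper's: both reduce to showing $\kappa([U_1]) \neq 0$ by identifying the $\Pinm$-structure induced on $U_1$ with the non-bounding (Lie-group) $\Pinm$-structure on $S^1$. The paper states this identification more tersely (noting directly that $\sigma_1$ fails to extend over $D$), whereas you unpack it via the explicit null-bordism of $(U_0, \sigma_0)$ over $D$ and the $\pi_1(\O(n))$-action on framings; this is a reasonable elaboration of the same argument rather than a different route.
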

    \begin{proof}
        The statement follows from the proof of $\PinBor1 \cong \ZZ_2$ in~\cite[Theorem~2.1]{bib:kir_tay:pin}.
        Since the generator of $\PinBor1$ is given by the circle $S^1_{\mathrm{Lie}}$ with the Lie group framing, its induced $\Pinm$-structure does not extend over a disk.
        But $\ker(h)$ is generated by $[U_1]$ as shown in \cref{lem:ker_h_generator}, and $\kappa$ maps $[U_1]$ to the generator $[S^1_{\mathrm{Lie}}]$ in $\PinBor1$.
    \end{proof}
    This proposition shows that $\kappa$ is a splitting map for the short exact sequence~\eqref{eq:ses:typeii}.
    For the rest of this section, we show how exactly those splitting maps and $\Pinm$-structures are tied together.
    Let $\PinStr(X)$ be the set of $\Pinm$-structures up to equivalence.
    Furthermore, we denote
    \begin{align*}
        \Split(X) := \{ \text{splitting maps $\kappa \colon \Fr_1(X) \to \ZZ_2$ of~\eqref{eq:ses:typeii}} \}.
    \end{align*}
    The construction in the preceding paragraphs gives rise to a map $\PinStr(X) \to \Split(X)$.
    We show that this map satisfies an equivariant-like property with certain group actions.

    First note that there is a simply transitive group action by ${\Hom(H_1(X; o_X), \allowbreak \ZZ_2)}$ upon $\Split(X)$.
    The group action is described as follows.
    Take a splitting map $\kappa \colon \Fr_1(X) \to \ZZ_2$ and a homomorphism $\gamma \colon H_1(X; o_X) \to \ZZ_2$.
    If $(L, \framing)$ is a normally framed link in $X$, we define the action of $\gamma$ on $\kappa$ by
    \begin{align*}
        (\kappa \cdot \gamma)\left( [L, \framing] \right) := \kappa([L, \framing]) + (\gamma \circ h)([L, \framing]).
    \end{align*}
    It is not hard to check that this action is transitive and free.
    \begin{lemma}\label{lem:action_isomorphism}
        The mod\:2 reduction $r \colon H_1(X; o_X) \to H_1(X; \ZZ_2)$ induces an isomorphism
        \begin{align*}
            \Hom(H_1(X; o_X), \ZZ_2) \xrightarrow{\cong} \sfrac{H^1(X; \ZZ_2)}{\left<w_1(X)\right>}.
        \end{align*}
    \end{lemma}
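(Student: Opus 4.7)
The plan is to apply the twisted universal coefficient sequence~\eqref{eq:twisted_uct} in degree $k=1$, which already supplies an epimorphism $r^* \colon H^1(X;\ZZ_2) \to \Hom(H_1(X;o_X),\ZZ_2)$ with kernel $\Ext(H_0(X;o_X),\ZZ_2)$. The lemma therefore reduces to the identification $\ker(r^*) = \langle w_1(X)\rangle$ inside $H^1(X;\ZZ_2)$.

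Next I would compute $H_0(X;o_X)$ by unwinding the definition of homology with local coefficients as the module of coinvariants of the $\pi_1(X)$-representation on $\ZZ$ given by the orientation character. This yields $H_0(X;o_X) \cong \ZZ$ when $X$ is orientable and $H_0(X;o_X) \cong \ZZ_2$ otherwise. In the orientable case, both $\Ext(H_0(X;o_X),\ZZ_2)$ and $\langle w_1(X)\rangle$ are trivial, so $r^*$ is itself an isomorphism and the claim is immediate.

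In the non-orientable case, $\ker(r^*)$ has order two and $w_1(X)\neq 0$, so it suffices to verify that $r^*w_1(X) = 0$. Here I would invoke the geometric description from the paragraph on Thom's computations: every class in $H_1(X;o_X)$ is represented by a closed embedded $1$-manifold $\iota \colon C \hookrightarrow X$ together with an orientation of $\normb_\iota$, whose mod-$2$ reduction is $\iota_*[C]_2$. Since $\iota^* TX \cong TC \oplus \normb_\iota$ and both $TC$ (a line bundle over a $1$-manifold is always trivial) and $\normb_\iota$ are orientable, one obtains $\iota^* w_1(X) = 0$, and hence $\langle w_1(X), \iota_*[C]_2\rangle = 0$. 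This forces $w_1(X) \in \ker(r^*)$ and completes the identification.

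I do not anticipate any serious obstacle. The main point requiring care is ensuring that \cref{thm:twisted_uct} applies to $\Gamma = o_X$, which is legitimate since its stalks are free abelian; beyond that, the computation of $H_0(X;o_X)$ as coinvariants and the elementary splitting of $\iota^*TX$ cleanly pin down $w_1(X)$ as the generator of $\ker(r^*)$.
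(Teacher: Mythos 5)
Your proposal is correct and follows essentially the same route as the paper: apply the twisted universal coefficient sequence in degree one, compute $H_0(X;o_X)$ (as $\ZZ$ or $\ZZ_2$ according to orientability), and identify the kernel of $r^*$ with $\langle w_1(X)\rangle$ by observing that normally oriented links have orientable restriction of $TX$, so $w_1(X)$ pairs trivially with their mod-$2$ fundamental classes. The only cosmetic difference is that you separate the orientable and non-orientable cases explicitly and phrase the geometric step via the splitting $\iota^*TX\cong TC\oplus\normb_\iota$, whereas the paper simply notes that $\restr{TX}{L}$ is orientable.
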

    \begin{proof}
        Consider the twisted universal coefficient theorem in cohomology (see \cref{thm:twisted_uct}):
        \begin{equation*}
            \begin{tikzcd}[sep=small]
                0 \arrow[r] & \Ext(H_0(X; o_X), \ZZ_2) \arrow[r] & H^1(X; \ZZ_2) \arrow[r, "r^*"] & \Hom(H_1(X; o_X), \ZZ_2) \arrow[r] & 0 \mathrlap{.}
            \end{tikzcd}
        \end{equation*}
        Since $X$ is connected, we have
        \begin{align*}
            H_0(X; o_X) \cong    \begin{cases}
                                        \ZZ & \text{if $X$ is orientable,} \\
                                        \ZZ_2 & \text{if $X$ is non-orientable.}
                                    \end{cases}
        \end{align*}
        This implies that $\Ext(H_0(X; o_X), \ZZ_2)$ is either 0 or $\ZZ_2$ depending on whether $X$ is orientable or not.
        Thus $\ker(r^*)$ is either 0 or $\ZZ_2$ depending on whether $X$ is orientable or not.
        Let $w_1 \colon H_1(X; \ZZ_2) \to \ZZ_2$ denote the homomorphism induced by $w_1(X)$.
        Recall that elements of $H_1(X; o_X)$ are represented by normally oriented links $L$.
        This means that $\restr{TX}{L}$ must be orientable for all such $L$.
        Then, by the definition of $w_1(X)$, we deduce $(w_1 \circ r)([L]) = 0$ for all such $L$.
        Therefore ${\ker(r^*) = \left<w_1(X)\right>}$, and as $r^*$ is an epimorphism, we conclude the statement.
    \end{proof}
    The last ingredient allows us to draw a connection to the group action by $H^1(X; \ZZ_2)$ upon $\PinStr(X)$.
    In particular, \cref{thm:splitting_pin} is an immediate consequence of the following equivariant property:
    \begin{theorem}\label{thm:pin_split_equivariant}
        The following diagram of obvious maps commutes:
        \[
            \begin{tikzcd}[sep=large]
                \PinStr(X) \times H^1(X; \ZZ_2) \arrow[r, "\mathrm{act}"] \arrow[d]                                 & \PinStr(X) \arrow[d] \\
                \Split(X) \times \sfrac{H^1(X; \ZZ_2)}{\left<w_1(X)\right>} \arrow[r, "\mathrm{act}"]     & \Split(X) \mathrlap{.}
            \end{tikzcd}
        \]
    \end{theorem}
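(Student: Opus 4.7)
The plan is to unpack the diagram into the concrete equation $\kappa_{P \cdot \alpha} = \kappa_P \cdot r^*\alpha$, where $P \in \PinStr(X)$, $\alpha \in H^1(X;\ZZ_2)$, and $r^*\alpha \in \Hom(H_1(X; o_X), \ZZ_2)$ under the isomorphism of \cref{lem:action_isomorphism}. Evaluating both sides on a class $[L,\framing] \in \Fr_1(X)$, this reduces to verifying
\begin{align*}
    \kappa_{P \cdot \alpha}([L,\framing]) - \kappa_{P}([L,\framing]) = \left\langle \alpha, \iota_* [L]_2 \right\rangle,
\end{align*}
where $\iota \colon L \hookrightarrow X$ is the inclusion, since by construction of $h$ one has $r(h([L,\framing])) = \iota_*[L]_2$ in $H_1(X;\ZZ_2)$.

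First, I would track how the induced $\Pinm$-structure on $TL$ depends on $P$. The bundle $\restr{TX}{L}$ splits (up to stabilization) as $TL \oplus \normb_L$ and the framing $\framing$ trivializes $\normb_L$; thus a choice of $\Pinm$-structure on $\restr{TX}{L}$ determines one on $TL$. The set of $\Pinm$-structures on each of these bundles is a torsor under $H^1(L;\ZZ_2)$, and the stabilization construction in~\cite[Lemma~1.6]{bib:kir_tay:pin} is equivariant: restricting $P \cdot \alpha$ to $L$ twists the induced $\Pinm$-structure on $\restr{TX}{L}$ by $\iota^*\alpha \in H^1(L;\ZZ_2)$, and the same element twists the induced structure on $TL$. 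Next, I would compute the effect on $[L, \sigma] \in \PinBor1 \cong \ZZ_2$ of twisting $\sigma$ by $\iota^*\alpha$. Since $\PinBor1$ is generated by $S^1_\mathrm{Lie}$ and the two $\Pinm$-structures on $S^1$ differ by the generator of $H^1(S^1;\ZZ_2)$ while representing the two classes of $\PinBor1$, the effect on each circle component is to add the pairing $\left\langle \iota^*\alpha, [C]_2 \right\rangle$, and summing over components yields $\left\langle \iota^*\alpha, [L]_2 \right\rangle = \left\langle \alpha, \iota_*[L]_2 \right\rangle$ as desired.

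Combining these two steps gives the required identity, and hence the commutativity of the diagram. The main obstacle will be making the stabilization step precise, since the splitting $\restr{TX}{L} \cong TL \oplus \normb_L$ is only defined stably and the torsor structures on the various $\Pinm$-structure sets must be matched carefully; in particular, one must verify that the canonical map from $\Pinm$-structures on $\restr{TX}{L}$ to those on $TL$ (using the framing of $\normb_L$) is an isomorphism of $H^1(L;\ZZ_2)$-torsors. This is implicit in~\cite{bib:kir_tay:pin}, and once it is invoked the rest of the argument is a formal computation of pairings. \Cref{thm:splitting_pin} then drops out as an immediate corollary: on a $\Spin$ manifold the action of $w_1(X) = 0$ is trivial so the map $\PinStr(X) \to \Split(X)$ is bijective, while on a non-orientable $\Pinm$ manifold the kernel of $H^1(X;\ZZ_2) \to H^1(X;\ZZ_2)/\langle w_1(X)\rangle$ has order two, yielding the 2-to-1 correspondence.
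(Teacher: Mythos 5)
Your proposal is correct and follows essentially the same route as the paper: both reduce the commutativity to the identity $\kappa_{P\cdot\alpha}([L,\framing]) = \kappa_P([L,\framing]) + \langle\alpha,\iota_*[L]_2\rangle$, and verify it by tracking how a change of $\Pinm$-structure on $\restr{TX}{L}$ propagates (via the normal framing) to the induced $\Pinm$-structure on $TL$ and thence to the class in $\PinBor1$. The only difference is that you make explicit two steps the paper leaves implicit — that the stabilization map of $\Pinm$-structures is an isomorphism of $H^1(L;\ZZ_2)$-torsors, and that the two $\Pinm$-structures on a circle represent the two distinct elements of $\PinBor1$ — which is a welcome clarification rather than a different argument.
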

    \begin{proof}
        We fix a $\Pinm$-structure on $X$.
        Let $\kappa\colon \Fr_1(X) \to \ZZ_2$ be the corresponding induced splitting map.
        Suppose $\gamma \in \sfrac{H^1(X; \ZZ_2)}{\left<w_1(X)\right>}$, and let $\bar{\gamma} \in H^1(X; \ZZ_2)$ be a lift of $\gamma$.
        Moreover assume $(C, \framing)$ is a normally framed circle in $X$.
        When we identify $H^1(X; \ZZ_2) \cong \Hom(H_1(X; \ZZ_2), \ZZ_2)$, the action of $\bar{\gamma}$ on the given $\Pinm$-structure can be described as follows:
        \begin{align}\label{eq:change_of_pin}
            \bar{\gamma}([C]_2) =     \begin{cases}
                                        0 & \text{iff $\bar{\gamma}$ does \emph{not} change the $\Pinm$-structure on $\restr{TX}{C}$}, \\
                                        1 & \text{iff $\bar{\gamma}$ changes the $\Pinm$-structure on $\restr{TX}{C}$}.
                                    \end{cases}
        \end{align}
        Because $\restr{TX}{C}$ is orientable, we have $w_1([C]_2) = 0$.
        This implies that $\bar{\gamma}([C]_2)$ does not depend on the choice of lift.
        With the identification in \cref{lem:action_isomorphism}, we conclude
        \begin{align}
            (\kappa \cdot \gamma)([C, \framing])     &= \kappa([C, \framing]) + (\gamma \circ h)([C, \framing]) \nonumber\\
                                                    &= \kappa([C, \framing]) + \bar{\gamma}([C]_2). \label{eq:change_of_split}
        \end{align}
        The statement follows from substituting~\eqref{eq:change_of_pin} into~\eqref{eq:change_of_split}.
    \end{proof}

    \section{Application to vector bundles}
    \label{sec:vector_bundles}

    By obstruction theory, a choice of $\Spin$-structure over an oriented vector bundle of rank 3 or higher is equivalent to a choice of framing over the 1-skeleton that extends over the 2-skeleton.
    This leads to the following lemma:
    \begin{lemma}\label{lem:spin_unique_framing}
        Let $B$ be a 1-dimensional cell complex and $E \to B$ be an oriented spin vector bundle of rank $n \geq 3$.
        Then $E$ is isomorphic to the trivial bundle and a $\Spin$-structure uniquely determines a framing of~$E$.
    \end{lemma}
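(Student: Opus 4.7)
The plan is to prove both assertions using one uniform principle: every map from a $1$-dimensional CW complex to a simply connected space is null-homotopic, by cellular approximation combined with the vanishing of $\pi_1$.

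First, for the triviality of $E$, oriented rank $n$ bundles over $B$ are classified by $[B, B\SO(n)]$. Since $\pi_1(B\SO(n)) = \pi_0(\SO(n)) = 0$, the classifying space $B\SO(n)$ is simply connected, so the classifying map of $E$ is null-homotopic and $E$ is isomorphic to the trivial bundle.

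Next, for the framing, I would view a $\Spin$-structure on $E$ as a principal $\Spin(n)$-bundle $P \to B$ equipped with a $\Spin(n)$-equivariant map $P \to \mathrm{Fr}^+(E)$ covering the identity on $B$, where $\mathrm{Fr}^+(E)$ denotes the oriented frame bundle. A positive framing of $E$ corresponds to a section of $\mathrm{Fr}^+(E) \to B$, so it suffices to produce a section of $P$ that is unique up to homotopy and project it through $P \to \mathrm{Fr}^+(E)$. Because $n \geq 3$, the group $\Spin(n)$ is $1$-connected, hence $B\Spin(n)$ is $2$-connected, and the same uniform principle forces the classifying map $B \to B\Spin(n)$ of $P$ to be null-homotopic; thus $P$ is trivial and admits a section. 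Any two sections of a trivial $\Spin(n)$-bundle over $B$ differ by a map $B \to \Spin(n)$, and since $\Spin(n)$ is $1$-connected, all such maps are null-homotopic, so the section is unique up to homotopy.

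I do not foresee any genuine obstacle: the only care required is to set up the correspondence between $\Spin$-structures on $E$ and principal $\Spin(n)$-bundles lifting $\mathrm{Fr}^+(E)$ cleanly, after which both existence and uniqueness of the induced framing reduce to the $1$-connectedness of $\Spin(n)$ and the $1$-dimensionality of $B$.
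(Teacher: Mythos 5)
Your argument is correct. The paper itself does not write out a proof of this lemma: it gives the obstruction-theoretic heuristic (``a $\Spin$-structure is a framing over the $1$-skeleton that extends over the $2$-skeleton'') and then cites Konstantis \cite[Lemma~3.1]{bib:konstantis:counting_inv} for a ``geometric'' proof. Your proposal is a precise rendering of exactly the obstruction-theoretic viewpoint the paper sketches: triviality of $E$ from $[B, B\SO(n)]=*$ because $B\SO(n)$ is $1$-connected and $B$ is a $1$-complex; triviality of the lifting $\Spin(n)$-bundle $P$ from $[B,B\Spin(n)]=*$; and uniqueness of a section of $P$ up to homotopy from $[B,\Spin(n)]=*$ because $\Spin(n)$ is $1$-connected for $n\ge 3$. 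Projecting the (essentially unique) section of $P$ along the equivariant double cover $P \to \mathrm{Fr}^+(E)$ produces a section of $\mathrm{Fr}^+(E)$, hence a well-defined positive framing, independent of the chosen section of $P$. This is sound; the only tiny redundancy is that for triviality of $P$ you only need $1$-connectedness of $B\Spin(n)$, i.e.\ connectedness of $\Spin(n)$, though the full $1$-connectedness of $\Spin(n)$ is of course needed for the uniqueness step. Since the paper delegates to a reference, I cannot compare against a written-out proof in the paper, but your homotopy-theoretic argument is correct and entirely in the spirit of the remark preceding the lemma; Konstantis's version in the cited reference is a more hands-on ``geometric'' account of the same facts.
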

    A geometric proof of the above statement can be found in~\cite[Lemma~3.1]{bib:konstantis:counting_inv}.
    For the rest of this section, we assume that $n \geq 4$.
    Suppose $E \to X$ is an oriented spin vector bundle of rank $n$, and we fix a $\Spin$-structure over~$E$.
    Let $s \colon X \to E$ be a global section, and let $0_E \colon X \hookrightarrow E$ be the zero section.
    We may assume that $s$ intersects $0_E$ transversally, and we denote $s^{-1}(0_E) := L$.
    As the intersection is transverse, the differential $\mathrm{d}s \colon TX \to TE$ defines a canonical isomorphism of bundles $\normb_L \cong \restr{E}{L}$.
    The bundle $\restr{E}{L}$ inherits a $\Spin$-structure from $E$.
    By \cref{lem:spin_unique_framing} this induces a unique framing of $\restr{E}{L}$ and therefore a normal framing $\framing$ of $\normb_L$.
    This defines a framed bordism class $[L, \framing] \in \Fr_1(X)$.
    The class $h([L, \framing]) \in H_1(X; o_X)$ is the (twisted) Poincar\'e dual to the Euler class~$e(E) \in H^n(X; \ZZ)$.
    In other words, the class $[L, \framing]$ is a refinement of the Euler class~$e(E)$.
    We will show that this class is a \textit{full} obstruction to finding a non-vanishing section on~$E$.

    \begin{lemma}\label{lem:section_well_defined}
        The class $[L, \framing]$ does not depend on the choice of section $s$ up to homotopy.
    \end{lemma}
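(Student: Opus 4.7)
The plan is to construct a normally framed bordism $(\Sigma, \Phi) \subset X \times [0,1]$ between $(L_0, \framing_0)$ and $(L_1, \framing_1)$ arising from two transverse sections $s_0, s_1 \colon X \to E$. The natural candidate is the zero set of the linear interpolation. First I would pull back $E$ along the projection $\pi \colon X \times [0,1] \to X$ to obtain the rank-$n$ oriented spin bundle $\pi^*E$, and form the section $S(x,t) = (1-t) s_0(x) + t s_1(x)$ of $\pi^*E$. A standard transversality argument rel boundary lets me perturb $S$ in the interior so that it meets the zero section transversally. Setting $\Sigma := S^{-1}(0_{\pi^*E})$ yields a compact 2-dimensional submanifold of $X \times [0,1]$ with $\partial \Sigma = L_0 \sqcup L_1$, since $\dim(X \times [0,1]) = n+2$ and $\pi^*E$ has rank $n$.

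Next, I would equip $\Sigma$ with a normal framing $\Phi$ compatible with the $\framing_i$ on the boundary. The differential $\mathrm{d}S$ along $\Sigma$ provides a canonical bundle isomorphism $\normb_\Sigma \cong \restr{\pi^*E}{\Sigma}$, so it suffices to trivialize the restricted bundle. Since $\restr{\pi^*E}{\Sigma}$ is oriented spin of rank $n \geq 4$ over a 2-complex, it is trivializable by \cref{rmk:vb_trivial_sw_class}, and moreover the pulled-back $\Spin$-structure endows it with a canonical one.

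The key and most delicate step is to extract this framing from the $\Spin$-structure so that its boundary restrictions coincide with $\framing_0$ and $\framing_1$. For a trivializable oriented spin bundle of rank $n \geq 3$ over a 2-complex, positive framings up to homotopy and $\Spin$-structures are both $H^1(\Sigma; \ZZ_2)$-torsors (the former via \cref{lem:framings_first_cohomology}), and the natural map sending a framing to its induced $\Spin$-structure is an isomorphism of torsors. Applying this to $\restr{\pi^*E}{\Sigma}$ selects, up to homotopy, a unique framing $\Phi$ inducing the pulled-back $\Spin$-structure. Restricting to each boundary circle $L_i$, the framing $\Phi|_{L_i}$ induces the pulled-back $\Spin$-structure on $\restr{\pi^*E}{L_i}$, and the uniqueness statement in \cref{lem:spin_unique_framing} forces $\Phi|_{L_i}$ to be homotopic to $\framing_i$. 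This yields the desired framed bordism and proves $[L_0, \framing_0] = [L_1, \framing_1]$ in $\Fr_1(X)$. The hardest part is pinning down the torsor isomorphism between framings and $\Spin$-structures; this is precisely where the rank assumption $n \geq 3$ (equivalently $\pi_2(\SO(n)) = 0$) is essential, as otherwise framings over a 2-complex would be classified by more than just $H^1(\Sigma;\ZZ_2)$.
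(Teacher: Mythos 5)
Your proof is correct and follows essentially the same route as the paper: pull $E$ back over $X\times[0,1]$, form the zero locus $\Sigma$ of a (transverse) homotopy between the two sections, use the inherited $\Spin$-structure on $\restr{\pi^*E}{\Sigma}$ to induce a normal framing, and check it restricts to $\framing_i$ on $L_i$ via the uniqueness in \cref{lem:spin_unique_framing}. The one place you diverge is in how the framing on $\Sigma$ is pinned down: the paper first discards any closed components of $\Sigma$, so that the remaining surface-with-boundary is homotopy equivalent to a $1$-dimensional cell complex and \cref{lem:spin_unique_framing} applies directly, whereas you prove the bijection between framings and $\Spin$-structures over the $2$-complex $\Sigma$ itself by the torsor argument. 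Both are valid; the paper's shortcut avoids having to argue the torsor equivariance (which you correctly assert but do not prove in detail), while your version is slightly more self-contained and dispenses with the ``discard closed components'' step. Your use of linear interpolation $S(x,t)=(1-t)s_0(x)+ts_1(x)$ in place of an arbitrary homotopy is also an immaterial simplification, since any two sections of a vector bundle are homotopic by convex combination.
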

    \begin{proof}
        Suppose that $s' \colon X \to E$ is a section transverse to $0_E$ that is homotopic (through sections) to $s$.
        That is, there is a homotopy $\bar{s} \colon X \times [0,1] \to E$ with $\bar{s}(\,\cdot\,, 0) = s$ and $\bar{s}(\,\cdot\,, 1) = s'$.
        Consider the projection $\pr_1 \colon X \times [0,1] \to X$ and denote $\bar{E} = \pr_1^* E$.
        Then $\bar{s}$ is a section of $\bar{E}$ and without loss of generality, we may assume that $\bar{s}$ is transverse to $0_{\bar{E}}$.
        Denote the zero locus of $s'$ by $L' := (s')^{-1}(0_E)$ and the induced normal framing by $\varphi'$.
        Without loss of generality, we assume that the submanifold $\Sigma := \bar{s}^{-1}(0_{\bar{E}}) \subset X \times [0,1]$ has no connected components without boundary.
        Then the boundary of $\Sigma$ is precisely $L$ and $L'$ and $\Sigma \simeq \text{1-dim.\ cell complex}$.
        The restricted bundle ${\bar{E}}|_{\Sigma}$ inherits a $\Spin$-structure from $E$, and by \cref{lem:spin_unique_framing}, this gives a unique framing of ${\bar{E}}|_{\Sigma}$.
        This framing induces a normal framing over $\Sigma$ so that $(L, \framing)$ is normally framed to $(L', \framing')$.
    \end{proof}

    \begin{lemma}\label{lem:independent_triv}
        The class $[L, \framing]$ does not depend on the choice of the $\Spin$-structure over~$E$ if $w_n(E) = 0$.
    \end{lemma}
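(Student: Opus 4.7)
The plan is to fix a single section $s$ with $L = s^{-1}(0_E)$ and compare the induced normal framings $\framing$ and $\framing'$ of $L$ coming from two choices of $\Spin$-structure on $E$. Since any two $\Spin$-structures over $E$ differ by the action of a unique $\alpha \in H^1(X; \ZZ_2)$, and since by \cref{lem:spin_unique_framing} a $\Spin$-structure on $\restr{E}{L}$ uniquely determines a framing, the discrepancy between $\framing$ and $\framing'$ is controlled by the restriction $\iota^*\alpha \in H^1(L; \ZZ_2)$, where $\iota \colon L \hookrightarrow X$ is the inclusion. In particular, the underlying submanifold and its orientation are unchanged.

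The key geometric input is the following claim: if two normal framings $\framing, \framing'$ of a link $L \subset X$ differ by a class $\beta \in H^1(L; \ZZ_2)$, then
\begin{equation*}
    [L, \framing'] - [L, \framing] = \langle \beta, [L]_2 \rangle \cdot [U_1] \quad \text{in } \Fr_1(X).
\end{equation*}
I would prove this component-by-component. On a circle $C_i \subset L$ where $\beta([C_i]_2) = 1$, the framings $\framing|_{C_i}$ and $\framing'|_{C_i}$ are not homotopic, so they do not extend across a cylinder $C_i \times [0,1]$. Puncturing the cylinder at an interior point yields a normally framed bordism between $(C_i, \framing|_{C_i}) \sqcup (C_i, \framing'|_{C_i})$ and an unknot $U$ whose induced framing is uniquely determined by \cref{prop:unique_framing}; this framing must be that of $U_1$, since the non-trivial twist is precisely the obstruction to extending across a disk. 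Summing over components and using that $[U_1]$ has order at most two by \cref{prop:u1_order_two}, only the parity $\langle \beta, [L]_2 \rangle \in \ZZ_2$ survives, giving the claimed formula.

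Combining these two ingredients with the adjunction identity $\langle \iota^*\alpha, [L]_2 \rangle = \langle \alpha, \iota_*[L]_2 \rangle$ produces
\begin{equation*}
    [L, \framing'] - [L, \framing] = \langle \alpha, \iota_*[L]_2 \rangle \cdot [U_1].
\end{equation*}
To conclude, observe that $\iota_*[L]_2 \in H_1(X; \ZZ_2)$ is the $\ZZ_2$-Poincar\'e dual of $w_n(E) \in H^n(X; \ZZ_2)$, because $L$ arises as the transverse zero locus of a section of $E$ and the mod $2$ reduction of the Euler class is $w_n(E)$. The hypothesis $w_n(E) = 0$ therefore forces $\iota_*[L]_2 = 0$, so the pairing vanishes for every $\alpha$ and $[L, \framing]$ is independent of the chosen $\Spin$-structure. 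The main obstacle is executing the cylinder-puncture argument precisely enough to identify the resulting unknot framing with that of $U_1$; here the uniqueness clause of \cref{prop:unique_framing} does the essential work.
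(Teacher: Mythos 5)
Your proof is correct and follows essentially the same strategy as the paper: use $\mathrm{PD}_2([L]_2)=w_n(E)$ to turn the hypothesis into $[L]_2=0$, note that a change of $\Spin$-structure by $\alpha\in H^1(X;\ZZ_2)$ twists the framing on each component $C_i$ by $\alpha([C_i]_2)$, and observe that only the total $\langle\alpha,[L]_2\rangle$ matters for the bordism class. The paper states this in three terse sentences; you make the component-wise parity bookkeeping explicit with punctured-cylinder bordisms, which is the honest way to see it.

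One step you leave a bit informal, and it is the one place where the hypothesis actually bites: you must identify the punctured-cylinder circle (whose induced framing fails to extend over the filling disk) with $[U_1]$, not merely observe that it lies in $\ker(h)$. If it were $0\in\Fr_1(X)$ instead, your sum would vanish for every $\alpha$ and the lemma would hold unconditionally, which would make the hypothesis $w_n(E)=0$ superfluous. The identification does hold — for instance by an ambient isotopy carrying the circle-with-disk pair to the fixed pair $(U,D)$, available since $\dim X\geq 5$, or by the argument used in the paper's proof of \cref{lem:ker_h_generator} — so the proof is complete once this is supplied.
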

    \begin{proof}
        To begin, let us observe that $\mathrm{PD}_2([L]_2) = w_n(E)$.
        Consequently, we can deduce that $w_n(E) = 0$ if and only if $[L]_2 = 0$.
        A change of $\Spin$-structure corresponds to a homomorphism $\pi_1(X) \to \ZZ_2$, i.e.\ an element in $H^1(X; \ZZ_2)$.
        That homomorphism takes the homotopy class of a circle $C$ and ``twists'' the framing over $C$ by an element in $\pi_1(\SO(n)) \cong \ZZ_2$.
        However, given that $[L]_2 = 0$, any such change does not impact the framing over~$L$.
    \end{proof}

    Let $W$ be a connected closed $(n+k)$-dimensional manifold with connected boundary~$\partial W$, where $k \geq 1$.
    Consider a smooth map $f \colon W \to \RR^n$ for which zero is a regular value and $f^{-1}(0) \cap \partial W = \varnothing$.
    Then $M := f^{-1}(0)$ is a closed submanifold of $\mathrm{int}(W)$.
    The manifold $M$ comes equipped with an induced normal framing $\framing$.
    Note that $(M, \framing)$ defines an element $[M, \framing]$ in the \emph{relative} framed bordism group $\Fr_k(W, \partial W)$.%
    \footnote{
        Here, $\Fr_k(W, \partial W)$ is the set of normally framed closed $k$-dimensional submanifolds of $\mathrm{int}(W)$ up to framed bordism.
    }
    Now, we~set
    \begin{align*}
        g \colon \partial W \to S^{n-1} \quad \text{where} \quad g(x) := \frac{f(x)}{|f(x)|}.
    \end{align*}
    \begin{lemma}\label{lem:extension_of_unit}
        The map $g$ extends over $W$ if and only if $[M, \framing]=0$ in $\Fr_k(W, \partial W)$.
    \end{lemma}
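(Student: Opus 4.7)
The plan is to prove both directions of the equivalence via Pontryagin–Thom-style constructions in the product $W \times [0,1]$, using the straight-line homotopy between $f$ and $\tilde{g}$ for one direction and a tubular-neighborhood construction for the other.

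For the direction ``$g$ extends $\Rightarrow [M, \framing] = 0$'', suppose $\tilde{g} \colon W \to S^{n-1} \subset \RR^n$ extends $g$. I would define
\[
    H \colon W \times [0,1] \to \RR^n, \qquad H(x,t) = (1-t) f(x) + t \, \tilde{g}(x).
\]
Then $H(\cdot, 0) = f$, so $H^{-1}(0) \cap (W \times \{0\}) = M$, while $H(\cdot, 1) = \tilde{g}$ is nowhere zero. For $x \in \partial W$, using $\tilde{g}(x) = f(x)/|f(x)|$, the formula reduces to $H(x,t) = \bigl((1-t) + t/|f(x)|\bigr) f(x)$, which is nonvanishing. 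Hence $H^{-1}(0) \subset \mathrm{int}(W) \times [0,1]$. After a small transversality perturbation supported away from the boundary components, $H^{-1}(0)$ is a compact framed $(k+1)$-submanifold with boundary $M \times \{0\}$; computing $dH$ at $(m,0)$ shows that the induced framing restricts to $\framing$, producing the required null-bordism in $\Fr_k(W, \partial W)$.

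For the reverse direction, suppose $\Sigma \subset \mathrm{int}(W) \times [0,1]$ is a framed null-bordism with $\partial \Sigma = M \times \{0\}$ and extended framing $\framing_\Sigma$. Using a collar, arrange that $\Sigma$ is a product $M \times [0,\varepsilon]$ near $W \times \{0\}$ with $\framing_\Sigma$ the pullback of $\framing$. I would then construct $F \colon W \times [0,1] \to \RR^n$ with $F^{-1}(0) = \Sigma$ (transversally, with induced framing $\framing_\Sigma$), with $F(\cdot, 0) = f$ and with $F$ nowhere vanishing on $\partial W \times [0,1]$ and on $W \times \{1\}$. This is done by identifying a tubular neighborhood of $\Sigma$ with $\Sigma \times D^n$ via $\framing_\Sigma$, setting $F(\sigma, v) = v$ there (agreeing with $f$ in the product collar), and extending to a fixed nonzero value outside via a bump-function interpolation. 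Then $\tilde{g}(x) := F(x,1)/|F(x,1)|$ is a well-defined map $W \to S^{n-1}$, while $F(x,t)/|F(x,t)|$ for $x \in \partial W$ gives a homotopy between $g$ and $\tilde{g}|_{\partial W}$. The homotopy extension property for the CW pair $(W, \partial W)$ then upgrades $\tilde{g}$ to a genuine extension of $g$.

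The main obstacle is the reverse direction: constructing $F$ which simultaneously realizes $\Sigma$ as its transverse zero set with the prescribed framing, restricts to $f$ on $W \times \{0\}$, and stays nonzero on the other boundary components. Matching the given framing $\framing$ of $M$ with the ambient framing $\framing_\Sigma$ of $\Sigma$ along $\partial \Sigma$ is the delicate point; this is handled by the collar straightening of $\Sigma$ near $M \times \{0\}$, which ensures that the tubular neighborhoods are compatible. Once $F$ is built, the rest follows formally from the standard relationship between maps to $\RR^n \setminus \{0\}$ and maps to $S^{n-1}$.
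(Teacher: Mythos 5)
Your forward direction is sound and pleasantly elementary: the straight-line homotopy $H(x,t)=(1-t)f(x)+t\,\tilde g(x)$ is nonvanishing on $\partial W\times[0,1]$ and on $W\times\{1\}$, equals $f$ at $t=0$, and is already transverse to $0$ near $W\times\{0\}$; perturbing rel a neighborhood of $(W\times\{0\})\cup(\partial W\times[0,1])\cup(W\times\{1\})$ yields a framed null-bordism with the correct framing on $M$.

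The reverse direction has a genuine gap in the construction of $F$. On the tube $N\cong\Sigma\times D^n$ you set $F(\sigma,v)=v$, so $F|_{\partial N}\colon\Sigma\times S^{n-1}\to S^{n-1}$ is the fiber projection, which is surjective and in particular not null-homotopic (take $\Sigma$ a point: it is the identity of $S^{n-1}$). Hence there is no ``bump-function interpolation'' through nonvanishing maps from $F|_{\partial N}$ to a fixed nonzero constant on a collar of $\partial N$; such an interpolation would be precisely a null-homotopy of that projection inside $\RR^n\setminus\{0\}$. More generally, extending $F$ nonvanishingly over $(W\times[0,1])\setminus N$ while matching $f$ on $W\times\{0\}$ is itself an obstructed extension problem, and those obstructions are exactly what the lemma is supposed to control, so the argument is circular as written. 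The paper sidesteps this by sending the complement of the tube to the basepoint $\infty$ --- that is, by working with the Pontryagin--Thom map into the one-point compactification $(S^n,\infty)=(\RR^n\cup\{\infty\},\infty)$ rather than into $\RR^n$ --- and then translating the resulting statement about $[(W,\partial W),(S^n,\infty)]$ into one about extending $g\colon\partial W\to S^{n-1}$ over $W$ via the Puppe exact sequence together with the stable suspension isomorphisms $[\susp W,S^n]\cong[W,S^{n-1}]$ and $[\susp\partial W,S^n]\cong[\partial W,S^{n-1}]$ (this is where $n\ge4$ is used). If you want to keep a hands-on construction you would need to reproduce that cofibration argument rather than interpolate directly to a constant.
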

    \begin{proof}
        To begin the proof, we observe that the map $f \colon W \to \RR^n$, viewed as a map into the 1-point compactification $\RR^n \subset \RR^n \cup \{\infty\} = S^n$, can be homotoped to a map $f_{\mathrm{rel}} \colon (W, \partial W) \to (S^n, \infty)$ by radially extending it to infinity in a collar of $\partial W$.
        The Pontryagin-Thom construction gives an isomorphism $$\Fr_k(W, \partial W) \cong [(W, \partial W), (S^n, \infty)].$$
        This isomorphism associates the bordism class $[M, \framing]$ with the homotopy class $[f_\mathrm{rel}]$.
        When we apply the Puppe sequence to the inclusions
        \begin{align*}
            \iota &\colon \partial W \hookrightarrow W,\\
            \jmath &\colon (W, \varnothing) \hookrightarrow (W, \partial W),
        \end{align*}
        we obtain a (long) exact sequence in relative cohomotopy sets:
        \begin{equation}\label{eq:puppe_sequence}
            \begin{tikzcd}[sep=small]
                \left[ \susp W, S^n \right] \arrow[r, "\iota^*"] & \left[ \susp \partial W, S^n \right] \arrow[r, "\delta"] & \left[ (W, \partial W), (S^n, \infty) \right] \arrow[r, "\jmath^*"] & \left[ W, S^n \right],
            \end{tikzcd}
        \end{equation}
        where $\mathrm{S}$ denotes the unreduced suspension functor.
        Since we assumed $n \geq 4$, we are in the stable range for the suspension and obtain $$[\susp W, S^n] \cong [W, S^{n-1}] \quad \text{and} \quad [\susp \partial W, S^n] \cong [\partial W, S^{n-1}].$$
        To understand the boundary map $\delta$ in~\eqref{eq:puppe_sequence}, we consider the suspended map $\mathrm{S}g \colon \mathrm{S}\partial W \to S^n$.
        Using the inclusion of a collar $\partial W \times [0,1] \hookrightarrow W$, we can extend $\mathrm{S}g$ onto $W$ by assigning it a constant value outside the collar, which produces a map $f'_{\mathrm{rel}} \colon (W, \partial W) \to (S^n, \infty)$.
        By construction, the map $f'_{\mathrm{rel}}$ is homotopic to $f_\mathrm{rel}$ relative to $\partial W$.
        Consequently, we have $\delta[g] = [f_{\mathrm{rel}}]$.
        The statement now follows from the exactness of~\eqref{eq:puppe_sequence}.
    \end{proof}
    We are ready to prove the main theorem of this section that generalizes the invariant introduced by Konstantis~\cite{bib:konstantis:counting_inv}.
    \begin{theorem}\label{thm:nonvanishing_section}
        Let $E \to X$ be an oriented vector bundle of rank $n \geq 4$ with a $\Spin$-structure.
        Moreover, denote by $(L, \framing)$ the zero locus of a section transverse to the zero section together with the normal framing induced by the $\Spin$-structure.
        Then $E$ admits a non-vanishing section if and only if $[L, \framing] = 0$ in $\Fr_1(X)$.
    \end{theorem}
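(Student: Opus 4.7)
The plan is to handle each direction separately. The ``if'' direction is nearly immediate: if $E$ admits a non-vanishing section $s'$, then since the space of sections of any vector bundle is affine, $s$ and $s'$ are homotopic through sections. The section $s'$ is trivially transverse to $0_E$ with zero locus $\varnothing$, so \cref{lem:section_well_defined} yields $[L, \framing] = 0$.

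For the ``only if'' direction, suppose $[L, \framing] = 0$ in $\Fr_1(X)$, and fix a normally framed null-bordism $(\Sigma, \framing')$ in $X \times [0,1]$ with $\partial \Sigma = L \subset X \times \{0\}$ and $\framing'$ extending $\framing$. I would aim to construct a section $\bar{s}$ of the pullback bundle $\bar{E} := \pr_1^* E$ on $X \times [0, 1]$ such that $\bar{s}|_{X \times \{0\}} = s$, $\bar{s}$ is transverse to the zero section $0_{\bar{E}}$, and $\bar{s}^{-1}(0_{\bar{E}}) = \Sigma$ with induced framing $\framing'$. Granting this construction, $\bar{s}|_{X \times \{1\}}$ is a non-vanishing section of $E$ since $\Sigma \cap (X \times \{1\}) = \varnothing$, completing the proof.

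To build $\bar{s}$, observe that $\bar{E}|_\Sigma$ is an oriented spin vector bundle of rank $n \geq 4$ over the surface $\Sigma$ with non-empty boundary. By \cref{rmk:vb_trivial_sw_class} it is trivializable, and \cref{lem:spin_unique_framing} produces a canonical framing of $\bar{E}$ over the 1-skeleton of $\Sigma$ whose restriction to $\partial \Sigma = L$ recovers $\framing$ through the transversality isomorphism $\normb_L \cong E|_L$ induced by $\mathrm{d}s$. Extend this framing across $\Sigma$, and use $\framing'$ to identify a tubular neighborhood $N(\Sigma) \cong \Sigma \times D^n$, so that $\bar{E}|_{N(\Sigma)} \cong N(\Sigma) \times \RR^n$. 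Define the local section $\sigma_{\mathrm{loc}}(p, v) := v$, which vanishes exactly on $\Sigma$ with the correct induced framing. Away from a smaller tubular neighborhood of $\Sigma$, use the constant-in-$t$ extension $\tilde s(x,t) = s(x)$, and interpolate between the two via a smooth partition of unity supported in the collar of $N(\Sigma)$.

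The main obstacle is verifying that the glued section is well-defined, smooth, transverse, and actually satisfies $\bar{s}|_{X \times \{0\}} = s$ on the nose. The first three properties can be secured by perturbing within $N(\Sigma)$ since transversality is an open condition. The last---and most delicate---compatibility relies on the fact that $\framing$ on $L$ is recovered by the canonical $\Spin$-trivialization on $\partial \Sigma$ (via \cref{lem:spin_unique_framing}), so that $\sigma_{\mathrm{loc}}$ agrees to first order with $\tilde s$ along the collar of $L \subset \Sigma$. Carrying out this matching carefully, and invoking a collar-neighborhood argument, produces the desired section $\bar{s}$.
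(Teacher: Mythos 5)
Your ``if'' direction is sound and essentially matches the paper's: any two sections of a vector bundle are homotopic through sections (linear interpolation), so \cref{lem:section_well_defined} applies immediately. The paper leaves the affineness observation implicit, but it is the same argument.

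The ``only if'' direction has a genuine gap in the gluing construction. You want to build a section $\bar{s}$ of $\pr_1^*E$ over $X \times [0,1]$ with $\bar{s}|_{X \times \{0\}} = s$ and $\bar{s}^{-1}(0_{\bar E}) = \Sigma$, and deduce that $\bar{s}|_{X \times \{1\}}$ is non-vanishing. The problem is that the constant extension $\tilde s(x,t) = s(x)$ vanishes on the entire cylinder $L \times [0,1]$, not just on $\Sigma$. Since $\Sigma$ is a compact surface with $\Sigma \cap (X \times \{1\}) = \varnothing$, any tubular neighborhood $N(\Sigma)$ is bounded away from $X \times \{1\}$. Outside $N(\Sigma)$ your glued $\bar s$ is by definition equal to $\tilde s$, so it still vanishes on the portion of $L \times [0,1]$ outside $N(\Sigma)$ --- in particular $\bar s|_{X \times \{1\}} = s$, which is not non-vanishing. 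The partition-of-unity interpolation in the collar cannot erase these zeros; the issue is not the delicate ``first-order matching'' along $L$ that you flag, but the fact that the far region, where you fall back on $\tilde s$, already contains unwanted zeros that gluing near $\Sigma$ does nothing to remove.

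The paper avoids this by not working in $X \times [0,1]$ at all in the ``only if'' direction. It isotopes each component of the null-bordism into $X$ itself (using that the codimension is at least three), thickens the resulting compact surface-with-boundary to a codimension-zero compact submanifold $W_k \subset X$ with smooth boundary, trivializes $E|_{W_k}$ (possible since $W_k$ retracts onto a surface), and then invokes the relative Pontryagin--Thom / Puppe sequence argument of \cref{lem:extension_of_unit} to replace $s|_{W_k}$ by a nowhere-zero map while fixing $s$ near $\partial W_k$. This produces the non-vanishing section directly, without ever needing a global section of $\pr_1^*E$ with prescribed zero locus. To repair your approach along the lines you sketched, you would need to first deform $\tilde s$ so that it does not vanish outside a neighborhood of $\Sigma$ --- but that is essentially the statement you are trying to prove, so the construction as written is circular where it is not simply false.
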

    \begin{proof}
        Suppose $E$ admits a non-vanishing section $s'$.
        Then the zero locus of $s'$ is the empty set, and because of \cref{lem:section_well_defined}, we get $[L, \varphi] = 0$.

        For the converse, assume that $[L, \framing] = 0$ in $\Fr_1(X)$.
        Let $\Sigma \subset X \times [0,1]$ be the associated framed null-bordism.
        Notice that $\Sigma$ may not be connected, i.e.\ $\Sigma = \Sigma_1 \sqcup \dots \sqcup \Sigma_m$.
        Without loss of generality, we assume that each connected component $\Sigma_k$ has a non-empty boundary.
        We isotope each $\Sigma_k$ relative to~$\partial \Sigma_k$ to a submanifold $\iota_k \colon \Sigma_k \hookrightarrow X$.
        There, we attach a closed collar, i.e.\ we extend $\iota_k$ to an embedding
        \begin{align*}
            \Sigma_k \cup_{\partial \Sigma_k} \big( \partial \Sigma_k \times [0,1] \big) \hookrightarrow X,
        \end{align*}
        and consider the tubular neighborhood of this embedding.
        The corresponding disk bundle gives us a compact manifold with corners.
        We can smoothen the corners to produce a compact manifold $W_k$ with connected and smooth boundary $\partial W_k$.
        By construction, we have $[\partial \Sigma_k, \framing_k] = 0$ in $\Fr_1(W_k, \partial W_k)$, where $\framing_k$ is the restriction of $\framing$ to $\partial \Sigma_k$.
        As $W_k$ is homotopy equivalent to $\Sigma_k$, we have $\restr{E}{W_k} \cong W_k \times \RR^n$.
        If we restrict $s$ to $W_k$, we get an induced map $f_k \colon W_k \to \RR^n$ whose zero locus is precisely~$\partial \Sigma_k$.
        We can now apply \cref{lem:extension_of_unit} to deform $s$ into a section that has no zeroes in each~$W_k$.
    \end{proof}

    \begin{remark}
        The proof above can be interpreted as a version of the Whitney trick.
        To see this, we apply the same trick if the rank of $E$ is equal to the dimension of $X$, namely $n+1$.
        In that case, the null-bordism is given by a bunch of arcs that isotope to arcs in $X$.
        We concatenate these arcs with their corresponding lifts along the section $s$ and obtain closed curves in the total space $E$ that bound contractible disks.
        These disks can now be used to ``push'' $s$ from $0_E$ in a neighborhood of each arc.
    \end{remark}

    Now, we would like to relate \cref{thm:nonvanishing_section} to type I and type II manifolds.
    If $X$ is a manifold of type~I, the map $h \colon \Fr_1(X) \to H_1(X; o_X)$ is an isomorphism.
    Therefore, \cref{thm:vb_typei} is an immediate consequence of \cref{thm:nonvanishing_section}.
    To obtain a well-defined invariant in the case of type IIa manifolds, we require the next lemma.
    \begin{lemma}
        Suppose that $X$ is of type IIa, and choose a $\Pinm$-structure over $X$.
        We denote by $\kappa \colon \Fr_1(X) \to \ZZ_2$ the induced splitting map.
        Moreover, let $E$ and $(L, \framing)$ as in \cref{thm:nonvanishing_section}.
        If $w_n(E) = 0$, then $\kappa([L, \framing])$ does not depend on the chosen $\Pinm$-structure.
    \end{lemma}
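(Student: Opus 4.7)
The plan is to combine the equivariance established in Theorem~\ref{thm:pin_split_equivariant} with the fact, already observed in the proof of Lemma~\ref{lem:independent_triv}, that $\mathrm{PD}_2([L]_2) = w_n(E)$. In particular, the $\Spin$-structure on $E$ (and hence the framed bordism class $[L, \framing] \in \Fr_1(X)$) is kept fixed throughout the argument; only the $\Pinm$-structure on $X$ varies.

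Concretely, fix two $\Pinm$-structures $P_0, P_1$ on $X$ differing by a class $\alpha \in H^1(X; \ZZ_2)$, and denote the associated splitting maps produced by the construction at the start of \cref{sec:typeiia} by $\kappa_0, \kappa_1 \colon \Fr_1(X) \to \ZZ_2$. Under the identification of \cref{lem:action_isomorphism}, $\alpha$ projects to a class $\gamma \in H^1(X; \ZZ_2)/\langle w_1(X)\rangle$ acting on $\Split(X)$, so by \cref{thm:pin_split_equivariant} we have $\kappa_1 = \kappa_0 \cdot \gamma$. Applying equation~\eqref{eq:change_of_split} componentwise to $L$, and using that $\kappa_i$ and $h$ are group homomorphisms so the formula extends additively from a single framed circle to the framed link $(L, \framing)$, we obtain
\[
    \kappa_1([L, \framing]) - \kappa_0([L, \framing]) = \bar{\gamma}([L]_2),
\]
where $\bar{\gamma} \in H^1(X; \ZZ_2) \cong \Hom(H_1(X; \ZZ_2), \ZZ_2)$ is any lift of $\gamma$.

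To finish, I invoke the hypothesis $w_n(E) = 0$. Since $\mathrm{PD}_2([L]_2) = w_n(E)$ (as noted in the proof of \cref{lem:independent_triv}), the class $[L]_2 \in H_1(X; \ZZ_2)$ vanishes, so $\bar{\gamma}([L]_2) = 0$ and $\kappa_1([L, \framing]) = \kappa_0([L, \framing])$, as required. I do not anticipate a genuine obstacle: the lemma is essentially a direct corollary of \cref{thm:pin_split_equivariant} together with the mod~$2$ Euler class computation. The only point requiring minor care is the routine verification that~\eqref{eq:change_of_split}, originally phrased for a single normally framed circle, propagates to the link $(L, \framing)$ by additivity of the relevant homomorphisms.
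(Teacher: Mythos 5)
Your proposal is correct and matches the paper's intended argument: the paper's proof is terse but explicitly points to \cref{lem:independent_triv} (for the identity $\mathrm{PD}_2([L]_2)=w_n(E)$) and to \cref{thm:pin_split_equivariant} together with~\eqref{eq:change_of_split}, which is exactly the chain of reasoning you spell out. The only detail you add — that~\eqref{eq:change_of_split} extends from a single framed circle to a link by additivity of $\kappa$ and $h$ — is a correct and worthwhile observation that the paper leaves implicit.
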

    \begin{proof}
        The proof is similar to the proof of \cref{lem:independent_triv} and follows from~\cref{thm:pin_split_equivariant}.
        If $w_n(E)=0$, we obtain from~\eqref{eq:change_of_split} that $\kappa([L, \framing])$ remains unchanged for any choice of $\Pinm$-structure.
    \end{proof}
    Together with \cref{lem:independent_triv}, this allows us to define \emph{the degree of $E$}
    \begin{align*}
        \kappa(E) := \kappa([L, \framing]),
    \end{align*}
    provided that $w_n(E)=0$.
    If $X$ is oriented, this quantity is precisely the counting invariant introduced by Konstantis~\cite{bib:konstantis:counting_inv}.
    In the same spirit, we obtain \cref{thm:vb_typeiia}, which is now a corollary of \cref{thm:nonvanishing_section}, bearing in mind that $w_n(E)$ is the $\smod 2$ reduction of~$e(E)$.

    \printbibliography

\end{document}